\renewcommand\MR[1]{\relax} 
\newtheorem{thm}{Theorem}[section]
\numberwithin{equation}{section}
\newtheorem{lemma}[thm]{Lemma}
\newtheorem{prop}[thm]{Proposition}
\theoremstyle{definition}
\theoremstyle{remark}
\newtheorem{remark}[thm]{Remark}
\newtheorem{example}[thm]{Example}
\newtheorem{mycomment}[thm]{Comment}
{\end{mycomment}\endgroup}
\def\mathcs{C^{*}}
\newcommand{\cs}{\ensuremath{\mathcs}}
\DeclareMathSymbol{\rtimes}{\mathbin}{AMSb}{"6F}
\newcommand{\ib}{im\-prim\-i\-tiv\-ity bi\-mod\-u\-le}
\newcommand{\sme}{\,\mathord{\mathop{\text{--}}\nolimits_{\relax}}\,}
\def\R{\mathbf{R}}
\def\C{\mathbf{C}}
\def\NN{\mathbf{N}}
\def\T{\mathbf{T}}
\def\Z{\mathbf{Z}}
\def\Q{\mathbf{Q}}
\DeclareMathOperator*{\supp}{supp}
\DeclareMathOperator{\id}{id}
\def\set#1{\{\,#1\,\}}
\newcommand\sset[1]{\{#1\}}
\def\restr#1{|_{{#1}}}
\def\labelenumi{\textnormal{(\@alph\c@enumi)}}
\def\theenumi{\@alph \c@enumi}
\def\labelenumii{\textnormal{(\@roman\c@enumii)}}
\def\theenumii{\@roman \c@enumii}
\def\alphapart#1{\charno=96
\advance\charno by#1\char\charno}
\def\<{\langle}
\def\>{\rangle}
\let\ipscriptstyle=\scriptscriptstyle
\def\lipsqueeze{{\mskip -3.0mu}}
\def\ripsqueeze{{\mskip -3.0mu}}
\def\ipcomma{\nobreak\mathrel{,}\nobreak}
\newbox\ipstrutbox
\def\ipstrut{\copy\ipstrutbox}
\def\lip#1<#2,#3>{\mathopen{\relax_{\ipstrut\ipscriptstyle{
#1}}\lipsqueeze
\langle} #2\ipcomma #3 \rangle}
\def\blip#1<#2,#3>{\mathopen{\relax_{\ipstrut
\ipscriptstyle{ #1}}\lipsqueeze\bigl\langle} #2\ipcomma #3 \bigr\rangle}
\def\rip#1<#2,#3>{\langle #2\ipcomma #3
\rangle_{\ripsqueeze\ipstrut\ipscriptstyle{#1}}}
\def\brip#1<#2,#3>{\bigl\langle #2\ipcomma #3
\bigr\rangle_{\ripsqueeze\ipstrut\ipscriptstyle{#1}}}
\def\angsqueeze{\mskip -6mu}
\def\smangsqueeze{\mskip -3.7mu}
\def\trip#1<#2,#3>{\langle\smangsqueeze\langle #2\ipcomma #3
\rangle\smangsqueeze\rangle_{\ripsqueeze\ipstrut\ipscriptstyle{#1}}}
\def\btrip#1<#2,#3>{\bigl\langle\angsqueeze\bigl\langle #2\ipcomma
#3
\bigr\rangle
\angsqueeze\bigr\rangle_{\ripsqueeze\ipstrut\ipscriptstyle{#1}}}
\def\tlip#1<#2,#3>{\mathopen{\relax_{\ipstrut\ipscriptstyle{
#1}}\lipsqueeze \langle\smangsqueeze\langle} #2\ipcomma #3
\rangle\smangsqueeze\rangle}
\def\btlip#1<#2,#3>{\mathopen{\relax_{\ipstrut\ipscriptstyle{
#1}}\lipsqueeze
\bigl\langle\angsqueeze\bigl\langle} #2\ipcomma #3
\bigr\rangle\angsqueeze\bigr\rangle}
\def\ip(#1|#2){(#1\mid #2)}
\def\bip(#1|#2){\bigl(#1 \mid #2\bigr)}
\def\Bip(#1|#2){\Bigl( #1 \bigm| #2 \Bigr)}
\newcommand\A{\mathcal A}
\newcommand\hA{\hat \A}
\newcommand{\B}{\mathcal B}
\newcommand{\D}{\mathcal D}
\newcommand{\G}{\mathcal G}
\newcommand\Ll{\mathcal L}
\newcommand\Sigmaw{\widehat\Sigma}
\newcommand\ccsas{C_{c}^{\Sigmaw}(\hat\A\rtimes\Sigma)}
\newcommand\cisas{C_{\infty}^{\Sigmaw}(\hat\A\rtimes\Sigma)}
\newcommand\half{\frac12}
\newcommand\go{\Sigma^{(0)}}
\newcommand\goo{\G^{(0)}}
\newcommand\rgo{G^{(0)}}
\newcommand\p{\pi}
\newcommand\hp{\hat\p}
\newcommand\cshacgsw{\cs(\hA\rtimes\G;\Sigmaw)}
\newcommand\csrhacgsw{\cs_r(\hA\rtimes\G;\Sigmaw)}
\newcommand\cc{C_{c}}
\newcommand\HH{\mathscr{H}}
\renewcommand\H{\mathcal{H}}
\newcommand\ccs{\cc(\Sigma)}
\newcommand\hacs{\hA\rtimes\Sigma}
\newcommand\hacg{\hA\rtimes\G}
\newcommand\Sec{\Gamma}
\newcommand\F{\mathcal{F}}
\newcommand\bbeta{m}
\newcommand\auto{\vartheta}
\newcommand\hauto{\hat\auto}
\newcommand\renj{j}
\newcommand\ilt{inductive-limit topology}
\DeclareMathOperator{\Iso}{Iso}
\DeclareMathOperator{\End}{End}
\newcommand\Rep{\underline{L}}
\newcommand\Deltab{\underline{\Delta}}
\newcommand\bpi{\bar\pi}
\newcommand\gtw{\kappa}
\newcommand\hgtw{\hat\kappa}
\newcommand\halpha{\underline{\alpha}}
\newcommand\N{\mathcal{N}}
\newcommand\gux{G\backslash X}
\newcommand\ex{\E^{X}}
\newcommand\px{p^{X}}
\newcommand\usc{upper-semicontinuous}
\newcommand\E{\mathscr{E}}
\newcommand\BB{\mathscr{B}}
\newcommand\ccgsgtw{\cc(\G,\Sigma,\E,\gtw)}
\newcommand\csgsgtw{\cs(\G,\Sigma,\E,\gtw)}
\newcommand\csrgsgtw{\cs_r(\G, \Sigma, \E, \gtw)}
\newcommand\ccgshgtw{\cc(\G,\Sigma,\hat\E,\hat\gtw)}
\newcommand\csgshgtw{\cs(\G,\Sigma,\hat\E,\hat\gtw)}
\newcommand\csrgshgtw{\cs_r(\G,\Sigma,\hat\E,\hat\gtw)}
\newcommand{\Stab}{\operatorname{Stab}}
\begin{document}
\begin{abstract}
  Given a normal subgroup bundle $\A$ of the isotropy bundle of a
  groupoid $\Sigma$, we obtain a twisted action of the quotient
  groupoid $\Sigma/\A$ on the bundle of group $C^*$-algebras determined
  by $\A$ whose twisted crossed product recovers the groupoid
  $C^*$-algebra $C^*(\Sigma)$. Restricting to the case where $\A$ is
  abelian, we describe $C^*(\Sigma)$ as the $C^*$-algebra associated
  to a $\T$-groupoid over the tranformation groupoid obtained from the
  canonical action of $\Sigma/\A$ on the Pontryagin dual space of
  $\A$. We give some illustrative examples of this result.
\end{abstract}

\title{\cs-Algebras of Extensions of Groupoids by Group Bundles}
\date{6 November 2020}

\author[M. Ionescu]{Marius Ionescu}
\address{Department of Mathematics\\  United States Naval Academy\\
  Annapolis, MD 21402 USA}
\email{ionescu@usna.edu}

\author[A. Kumjian]{Alex Kumjian}
\address{Department of Mathematics \\ University of Nevada\\ Reno NV
  89557 USA}
\email{alex@unr.edu}

\author{Jean N. Renault}
\address{Institut Denis Poisson (UMR 7013)\\
  Universit\'e d'Orl\'eans et CNRS \\ 45067
  Orl\'eans Cedex 2, FRANCE}
\email{jean.renault@univ-orleans.fr}

\author[A. Sims]{Aidan Sims}
\address{School of Mathematics and Applied Statistics\\ University of
  Wollongong\\ NSW 2522, Australia}
\email{asims@uow.edu.au}

\author[D. P. Williams]{Dana P. Williams}
\address{Department of Mathematics\\ Dartmouth College \\ Hanover, NH
  03755-3551 USA}
\email{dana.williams@Dartmouth.edu}

\thanks{This research was supported by Australian Research Council
  grant DP180100595. This work was also partially supported by Simons
  Foundation Collaboration grants \#209277 (MI), \#353626 (AK) and
  \#507798 (DPW), and by Grant N0017318WR00251 from the Office of
  Naval Research and by the Naval Research Laboratory. This work was
  also facilitated by visits of the
  second and fifth author to the University of Wollongong as well as a
visit by the second author to the Universit\'e d'Orl\'eans.  We thank
our respective hosts and their institutions for their hospitality and
support.
We thank the anonymous referee of an earlier version of this
  paper for very thorough and detailed report that allowed us to
  improve the exposition.}

\maketitle

\section*{Introduction}

The objective of this paper is to develop tools for analyzing
$C^*$-algebras of very general groupoids via a version of the Mackey
machine, and also to construct Cartan subalgebras in $C^*$-algebras of
large classes of groupoids that are not necessarily
\'etale. Specifically, one of the fundamental components of the modern
Mackey Subgroup Analysis for group representations comes from Green's
\cite{gre:am78}*{Proposition~1}: if $H$ is a normal subgroup of a
locally compact group $G$, then $\cs(G)$ is a twisted crossed product
$\cs(G,\cs(H),\gtw)$ of $\cs(H)$ by $G$. The idea is then to describe
the representation theory and the structure of $\cs(G)$ in terms of
representations of $H$ and the action of $G$ on the spectrum of
$\cs(H)$ induced by conjugation. Variants of this approach are
sometimes called ``Mackey's Little Group Method''.

Our main theorem generalizes Green's theorem to the situation of a
locally compact Hausdorff groupoid $\Sigma$ and a normal subgroup
bundle $\A$ of the isotropy groupoid of $\Sigma$. Our result says that
the full $C^*$-algebra of $\Sigma$ coincides with a twisted crossed
product of $C^*(\A)$ by $\Sigma$. If $\A$ is amenable, this descends
to an isomorphism of reduced $C^*$-algebras. We pay particular
attention to the situation when $\A$ is abelian, and hence has a
Pontryagin dual space $\hA$. The quotient $\G=\Sigma/\A$ is a
topological groupoid, and we use our main theorem to prove that
$\cs(\Sigma)$ is isomorphic to the restricted \cs-algebra
$\cs(\hA\rtimes\G;\Sigmaw)$ of a $\T$-groupoid $\Sigmaw$ over a
transformation groupoid $\hA\rtimes\G$.  As the theory of
$\T$-groupoids---also called twists---is well studied, this provides
powerful tools for studying $\cs(\Sigma)$. This was the approach for
the main result in \cite{mrw:tams96}, and in fact, our isomorphism
result is a significant generalization of, and is motivated by,
\cite{mrw:tams96}*{Proposition~4.5}. As an illustrative example, we
discuss the special case of a locally compact Hausdorff group $\Sigma$
with a closed normal abelian subgroup $\A$; even in this special case,
our results have something new to say.

Accordingly, in Section~\ref{sec:extens-abel-group}, we consider a
second countable locally compact Hausdorff groupoid and subgroup
bundle $\A$ of the isotropy groupoid of $\Sigma$ whose unit space
coincides with that of $\Sigma$.  Then $\A$ acts on the left and right
of $\Sigma$ and we say that $\A$ is normal if the orbits $\sigma\A$
and $\A\sigma$ coincide for all $\sigma\in\Sigma$.  If $\A$ is normal,
then the quotient $\G=\Sigma/\A$ is a groupoid and we can view
$\Sigma$ as an extension of $\A$ by $\G$.  Assuming that both $\Sigma$
and $\A$ have Haar systems, $\Sigma$ acts by automorphisms on
$\cs(\A)$. We show that there is a Green--Renault twisting map $\gtw$
for this action.  This allows us to form a twisted crossed product of
which Green's twisted crossed products are a special case.  When $\A$
is an abelian group bundle these twisted crossed products are the same
as those in \cites{ren:jot91,ren:jot87}.  Our first main result
(Theorem~\ref{thm-hm}) extends Green's result by proving that
$\cs(\Sigma)$ is isomorphic to the twisted crossed product; it also
establishes a similar result for the reduced norm when $\A$ is
amenable. There is overlap with a result in \cite{bm:xx16}; we discuss
this in detail in Remark~\ref{rem-bm}.

In Section~\ref{sec:abelian-case} we restrict to the case that $\A$ is
an abelian group bundle.  Then the Gelfand dual, $\hA$, of $\cs(\A)$
is a locally compact right $\G$-space and we can form the
transformation groupoid $\hA\rtimes\G$.  We can build a natural
$\T$-groupoid, $\Sigmaw$, over $\hA\rtimes \G$.  Our second main
result (\ref{thm-main-mrw96}) is that $\cs(\Sigma)$ is isomorphic to
the restricted groupoid \cs-algebra $\cs(\hA\rtimes \G;\Sigmaw)$ of
this $\T$-groupoid with a similar assertion for the reduced norms. It
follows from \cite{barli:am17} that $C^*(\Sigma)$ belongs to the UCT
class if and only if it is nuclear.

In Section~\ref{sec:more examples}, we give a number of applications
and examples of our results.  We first consider in
Section~\ref{sec:clos-norm-abel} the situation of a closed normal
abelian subgroup $H$ of a locally compact group $G$. The quotient
$G/H$ acts on $H$ by conjugation, and hence also on the Pontryagin
dual $\widehat{H}$ viewed as a topological space. Our construction
yields a $\T$-groupoid $\Sigmaw$ over $\widehat{H} \rtimes (G/H)$. Our
main theorem identifies the group $C^*$-algebra $C^{*}(G)$ with the
$C^*$-algebra $C^*(\widehat{H} \rtimes (G/H); \Sigmaw)$ of this
$\T$-groupoid, and similarly at the level of reduced $C^*$-algebras
(cf., \cite{zel:jmpa68}).  As specific examples of this result we
demonstrate how to recover two standard descriptions of the
$C^*$-algebra of the integer Heisenberg group (see
Example~\ref{ex:Heisenberg}). Our second class of examples is that of
extensions of effective \'etale groupoids $\G$ by bundles $\A$ of
abelian groups. In this situation, the semidirect product
$\hA \rtimes \G$ is also \'etale, and we prove in Theorem~\ref{cartan}
that $C_0(\hA)$ embeds as a Cartan subalgebra of $C^*_r(\Sigma)$. This
generalizes a result in \cite{bnrsw:ieot16}. We conclude
Section~\ref{sec:more examples} with a few examples including
applications to higher-rank Deaconu--Renault groupoids and, in
particular, groupoids associated to row-finite higher rank graphs with
no sources.

This paper is a revised and shortened version of \cite{ikrsw:xx20c}.


\section{Preliminaries}\label{sec:prelims}

Our results require that we work with Green--Renault twisted crossed
products of \cs-algebras by locally compact Hausdorff groupoids, and
in particular, the theory of $\T$-groupoids and their restricted
groupoid \cs-algebras.  In order to take a unified approach, we use
the theory of Fell bundles over groupoids and their associated
\cs-algebras.  For convenience, we give a brief overview of the
required background with selected references.

\subsection{Open maps}
\label{sec:open-maps}

Open maps are ubiquitous when working with groupoids and bundles.  An
important tool we refer to frequently is known as \emph{Fell's
  Criterion}. For convenience we recall the formal statement from
\cite{fd:representations1}*{Proposition~II.13.2}.

\begin{lemma}[Fell's Criterion]
  \label{lem-fell-criterion}
  A surjection $f : X \to Y$ is an open map if and only if for every
  $x \in X$ and every net $\set{y_\alpha}$ converging to $f(x)$ in $Y$
  there is a subnet $\set{y_{\alpha_j}}$ and a net
  $\set{x_{\alpha_j}}$ in $Y$ such that
  $f(x_{\alpha_j}) = y_{\alpha_j}$ for all $j$, and
  $x_{\alpha_j} \to x$.
\end{lemma}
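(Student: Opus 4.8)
The plan is to prove both implications by reformulating openness of $f$ in terms of images of neighbourhoods and then, in the forward direction, manufacturing the required subnet from a product directed set built out of the index set and the neighbourhood filter of $x$. First I would record the basic reformulation: a surjection $f\colon X\to Y$ is open if and only if for every open $U\subseteq X$ the image $f(U)$ is open, equivalently if and only if $f(U)$ is a neighbourhood of $f(x)$ for every $x\in U$.

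For the ``only if'' direction, assume $f$ is open, fix $x\in X$, and let $\set{y_\alpha}_{\alpha\in A}$ be a net with $y_\alpha\to f(x)$. The key point is that for every open neighbourhood $U$ of $x$ the set $f(U)$ is an open neighbourhood of $f(x)$, so $\set{y_\alpha}$ is eventually in $f(U)$. I would then form the directed set
\[
D=\set{(\alpha,U) : \alpha\in A,\ U\text{ an open neighbourhood of }x,\ y_\alpha\in f(U)},
\]
ordered by $(\alpha,U)\le(\beta,V)$ iff $\alpha\le\beta$ in $A$ and $V\subseteq U$, and check that $D$ is directed (given two elements, intersect the neighbourhoods and use that $f(U_1\cap U_2)$ is a neighbourhood of $f(x)$ to find a common upper bound) and that the coordinate map $(\alpha,U)\mapsto\alpha$ is monotone and cofinal into $A$. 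Hence re-indexing $\set{y_\alpha}$ along $D$ produces a subnet $\set{y_{\alpha_j}}$. For each $j=(\alpha,U)\in D$, using $y_\alpha\in f(U)$ I choose $x_j\in U$ with $f(x_j)=y_{\alpha_j}$; then $\set{x_j}$ is a net in $X$, and it converges to $x$ because for any neighbourhood $U_0$ of $x$ the net $\set{y_\alpha}$ is eventually in $f(U_0)$, so beyond a suitable index every member of $D$ has second coordinate contained in $U_0$, forcing $x_j\in U_0$.

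For the ``if'' direction, assume the net condition, let $U\subseteq X$ be open, and show $f(U)$ is open by showing $Y\setminus f(U)$ is closed. Take a net $\set{y_\alpha}$ in $Y\setminus f(U)$ with $y_\alpha\to y\in Y$, and suppose for contradiction $y\in f(U)$, say $y=f(x)$ with $x\in U$. Applying the hypothesis to this $x$ and this net gives a subnet $\set{y_{\alpha_j}}$ and a net $\set{x_{\alpha_j}}$ in $X$ with $f(x_{\alpha_j})=y_{\alpha_j}$ and $x_{\alpha_j}\to x\in U$; since $U$ is open, $x_{\alpha_j}\in U$ eventually, so $y_{\alpha_j}=f(x_{\alpha_j})\in f(U)$ eventually, contradicting $y_{\alpha_j}\in Y\setminus f(U)$. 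Thus no point of $f(U)$ lies in the closure of $Y\setminus f(U)$, so $f(U)$ is open.

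The main obstacle will be the bookkeeping in the forward direction: verifying that $D$ is genuinely directed and that the projection to $A$ is cofinal, so that $\set{y_{\alpha_j}}$ really is a subnet and the chosen lifts converge to $x$. Everything else reduces to routine manipulation of neighbourhood bases; surjectivity of $f$ enters only to guarantee that $f(X)=Y$, which keeps the construction non-vacuous but is otherwise inessential.
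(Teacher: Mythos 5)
Your proof is correct: the forward direction via the product directed set of indices and neighbourhoods of $x$ (with the projection to the index set monotone and cofinal, so $\set{y_{\alpha_j}}$ is a genuine subnet and the chosen lifts converge to $x$), and the converse via the net characterization of closed sets, are exactly the standard argument. The paper itself gives no proof, simply citing Fell--Doran (Proposition~II.13.2), so there is nothing to contrast; note only that the statement's phrase ``a net $\set{x_{\alpha_j}}$ in $Y$'' is a typo for ``in $X$'', which your construction correctly interprets.
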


\subsection{Upper-semicontinuous Banach bundles}
\label{sec:upper-semic-banach}

A (upper-semicontinuous) Banach bundle over a space $X$ is a
topological space $\E$ together with a continuous open surjection
$p:\E\to X$ and complex Banach space structures on each fibre
$E(x):=p^{-1}(x)$ such that
\begin{compactenum}
\item $a\mapsto \|a\|$ is upper-semicontinuous from $\E$ to $\R^{+}$,
\item $(a,b)\mapsto a+b$ is continuous from
  $\E*\E=\set{(a,b)\in \E\times\E:p(a)=p(b)}$ to $\E$,
\item $(\lambda,a)\mapsto \lambda a$ is continuous from $\C\times\E$
  to $\E$, and
\item If $\set{a_{i}}$ is a net in $\E$ such that $p(a_{i})\to x$ and
  $\|a_{i}\|\to 0$, then $a_{i}\to 0_{x}$ where $0_{x}$ is the zero
  element in $E(x)$.
\end{compactenum}
If in addition each fibre $E(x)$ is a \cs-algebra and
$(a,b)\mapsto ab$ and $a\mapsto a^{*}$ are continuous on $\E*\E$ and
$\E$, respectively, then we call $\E$ a (upper-semicontinuous)
\cs-bundle.  If axiom~(a) is replaced by ``$a\mapsto \|a\|$ is
continuous'', then we call $\E$ either a continuous Banach bundle or a
continuous \cs-bundle.  We normally drop the adjective
``upper-semicontinuous'' and add ``continuous'' only when we
specialize to that case.

An excellent reference for continuous Banach bundles is \S\S13--14 of
\cite{fd:representations1}*{Chap. II}.  For more details in the
general case see \cite{muhwil:dm08}*{Appendix~A} and, for the
\cs-bundle case, \cite{wil:crossed}*{Appendix~C}. If $p:\E\to X$ is a
Banach bundle, we will write $\Sec_{0}(X;\E)$ for the continuous
sections of $\E$ which vanish at infinity.  Lazar's
\cite{laz:jmaa18}*{Theorem~2.9} shows that every Banach bundle
$p:\E\to X$ is guaranteed to have sufficiently many sections in the
sense that given $a\in E(x)$ there is a $f\in \Sec_{0}(X;\E)$ such
that $f(x)=a$.  Futhermore, $\Sec_{0}(X;\E)$ is a Banach space with
respect to the supremum norm. A primary motivation for working with
upper-semicontinuous bundles rather than restricting to continuous
ones is that a $C_{0}(X)$-algebra $A$ is always $C_{0}(X)$-isomorphic
to $\Sec_{0}(X,\mathscr A)$ for an appropriate upper-semicontinuous
\cs-bundle $\mathscr A$ \cite{wil:crossed}*{Theorem~C.26}.

In our constructions below, we need to work with Banach bundles that
arise as quotients of Banach bundles by isometric groupoid actions as
in \cite{kmrw:ajm98}*{\S2} (where these constructions were developed
for continuous bundles).  Specifically, we let $G$ be a second
countable locally compact Hausdorff groupoid with a Haar system
$\lambda$.  We suppose that $p:\E\to \rgo$ is an Banach bundle
admitting a continuous $G$-action $\gamma\cdot a = \alpha_{\gamma}(a)$
where $\alpha_{\gamma}:E(s(\gamma))\to E(r(\gamma))$ is an
\emph{isometric} Banach space isomorphism of the fibres of $\E$.

Now let $X$ be a \emph{free} and proper left $G$-space.  We can form
the pullback $X*\E=\set{(x,a)\in X\times \E:r(x)=p(a)}$.  Then $X*\E$
is an Banach bundle over $X$ which is a continuous Banach bundle if
$\E$ is.  We get a left $G$-action on $X*\E$ given by
\begin{equation}
  \label{eq:51}
  \gamma\cdot (x,a)= (\gamma\cdot x,\alpha_{\gamma}(a)).
\end{equation}
We will write $\ex$ for the orbit space $G\backslash(X*\E)$ with its
quotient topology, and write $[x,a]$ for the orbit of $(x,a)$.

\begin{lemma}
  The map $\px:\ex\to \gux$ given by $\px([x,a])=G\cdot x$ is a
  continuous open surjection.
\end{lemma}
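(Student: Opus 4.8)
The plan is to exhibit $\px$ as the map induced on $G$-orbit spaces by the Banach-bundle projection $\pr\colon X*\E\to X$, $(x,a)\mapsto x$, and then to read off all three properties from the formalism of quotient maps. Write $q\colon X*\E\to\ex$, $(x,a)\mapsto[x,a]$, and $q_X\colon X\to\gux$, $x\mapsto G\cdot x$, for the orbit maps; both target spaces carry the quotient topology, so $q$ and $q_X$ are quotient maps. The diagonal $G$-action on $X*\E$ satisfies $\pr(\gamma\cdot(x,a))=\gamma\cdot x$, so $q_X\circ\pr$ is constant on $G$-orbits and hence factors through $q$; the resulting map is exactly $\px$, and we record the identity
\begin{equation}
  \px\circ q=q_X\circ\pr.
\end{equation}

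From this identity, continuity and surjectivity are immediate. Since $X*\E$ is a Banach bundle over $X$, the projection $\pr$ is (by the bundle axioms) a continuous open surjection, so $q_X\circ\pr$ is a continuous surjection; then $\px$ is continuous because $q$ is a quotient map, and $\px$ is surjective because $q_X\circ\pr$ already is. For openness, suppose $U\subseteq\ex$ is open. Then $q^{-1}(U)$ is open in $X*\E$, and since $q$ is onto we have $q\bigl(q^{-1}(U)\bigr)=U$, whence
\begin{equation}
  \px(U)=\px\bigl(q(q^{-1}(U))\bigr)=(q_X\circ\pr)\bigl(q^{-1}(U)\bigr).
\end{equation}
So it suffices to know that $q_X\circ\pr$ is an open map, and as $\pr$ is open this reduces to the assertion that the orbit map $q_X\colon X\to\gux$ is open.

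That last point is the only step with real content, and it is the standard fact that orbit maps of topological groupoid actions are open; I would either cite it or argue as follows. Using the moment map $r\colon X\to\rgo$, the fibred product $G*X=\set{(\gamma,x):s(\gamma)=r(x)}$ carries the self-inverse homeomorphism $(\gamma,x)\mapsto(\gamma^{-1},\gamma\cdot x)$, which intertwines the action map $(\gamma,x)\mapsto\gamma\cdot x$ with the second-coordinate projection $G*X\to X$; the latter is the pullback of $s\colon G\to\rgo$ along $r$, hence open because $s$ is open (as $G$ has a Haar system). Therefore the action map is open, so for open $U\subseteq X$ the saturation $q_X^{-1}(q_X(U))=G\cdot U$ is open, i.e.\ $q_X(U)$ is open; thus $q_X$ is open. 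Granting this, the second display shows $\px(U)$ is open, completing the argument. Apart from the openness of $q_X$---which one may also derive from Fell's Criterion (Lemma~\ref{lem-fell-criterion})---the whole proof is a routine diagram chase, with the openness of $\pr$ supplied for free by the definition of a Banach bundle.
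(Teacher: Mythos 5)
Your proof is correct, but it takes a different route from the paper's. The paper verifies openness of $\px$ directly with Fell's Criterion: given a net $G\cdot x_i\to G\cdot x=\px([x,a])$, it first lifts (a subnet of) $G\cdot x_i$ to $x_i\to x$ in $X$ using openness of the orbit map, then uses openness of $p:\E\to\rgo$ to lift $r(x_i)\to r(x)=p(a)$ to $a_i\to a$ with $p(a_i)=r(x_i)$, so that $[x_i,a_i]\to[x,a]$; continuity is simply declared clear. You instead factor everything through the identity $\px\circ q=q_X\circ\pr$ and reduce openness of $\px$ to openness of the bundle projection $\pr:X*\E\to X$ and of the orbit map $q_X:X\to\gux$, the latter proved by the standard saturation argument (the action map $G*X\to X$ is open because it is, up to the homeomorphism $(\gamma,x)\mapsto(\gamma^{-1},\gamma\cdot x)$, the pullback of the open map $s$ along the moment map, $s$ being open since $G$ has a Haar system). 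The two arguments ultimately rest on the same ingredients---openness of $p:\E\to\rgo$ and openness of the $G$-orbit map---but yours is a purely point-set diagram chase with no nets or subnets, and it makes explicit the standard fact that the paper's phrase ``we can assume $x_i\to x$'' quietly invokes; the paper's net argument is shorter on the page and stays within the Fell's-Criterion toolkit it uses throughout. One small remark: your appeal to ``the bundle axioms'' for openness of $\pr$ leans on the paper's assertion that $X*\E$ is a Banach bundle, but this is harmless, since openness of $\pr$ also follows directly from openness of $p$ by exactly the same pullback argument you use for $G*X\to X$. Note also that neither proof needs freeness or properness of the action at this stage.
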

\begin{proof}
  Continuity is clear.  To see that $\px$ is open, we use Fell's
  criterion (see Lemma~\ref{lem-fell-criterion}).  Suppose that
  $G\cdot x_{i}\to G\cdot x= \px([x,a])$.  It suffices to lift a
  subnet to $\ex$.  After passing to a subnet, and relabeling, we can
  assume that $x_{i}\to x$.  Since $r(x_{i})\to r(x)=p(a)$ in $\rgo$
  and since $p$ is open, we can pass to another subnet, relabel, and
  assume that there are $a_{i}\to a$ in $\E$ such that
  $p(a_{i})=r(x_{i})$.  But then $[x_{i},a_{i}]\to [x,a]$ as required.
\end{proof}

To see that $\ex$ is an Banach bundle, we have to equip the fibres
$(\px)^{-1}(G\cdot x)$ with Banach space structures.  The map
$a\mapsto [x,a]$ is a bijection of $E(r(x))$ onto
$(\px)^{-1}(G\cdot x)$.  Thus we can define
$\bigl\|[x,a]\bigr\|=\|a\|$, $[x,a]+[x,b]=[x,a+b]$, and
$\lambda[x,a]=[x,\lambda a]$.
These structures are independent of our choice of representative
$x \in G \cdot x$ and makes $(\px)^{-1}(G\cdot x)$ into a Banach
space.  Note that $E^{X}(G\cdot x):= (\px)^{-1}(G\cdot x)$ is
isomorphic to $E(r(x))$; however, this identification is non-canonical
as it depends on the choice of representative for $G\cdot x$.

\begin{lemma}
  Suppose that $p:\E\to\rgo$ is a (continuous) Banach bundle on which
  $G$ acts by isometric isomorphisms
  $\alpha_{\gamma}:E(s(\gamma))\to E(r(\gamma))$.  Let
  $\px:\ex\to \gux$ be the quotient bundle described above with the
  given Banach space structure on the fibres $E^{X}(G\cdot x)$.  Then
  $\ex$ is a (continuous) Banach bundle over $\gux$.  In particular,
  if $\E$ is a \cs-bundle and $G$ acts by $*$-isomorphisms, then $\ex$
  is a \cs-bundle.
\end{lemma}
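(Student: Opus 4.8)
The plan is to verify, for the map $\px:\ex\to\gux$ (whose fibrewise Banach-space, resp.\ \cs-algebra, structure and whose continuity and openness as a surjection have already been established), the remaining bundle axioms~(a)--(d), and in the \cs-case the continuity of fibrewise multiplication and involution; the identification $E^{X}(G\cdot x)\cong E(r(x))$ noted above then shows the fibres are Banach spaces, resp.\ \cs-algebras. Throughout I would lean on two facts. First, the orbit map $q:X*\E\to\ex$, $q(x,a)=[x,a]$, is a continuous (by definition of the quotient topology) open surjection --- openness because orbit maps of groupoid actions are always open --- and it is fibrewise isometric and intertwines the algebraic operations: $q(x,a)+q(x,b)=q(x,a+b)$, $\lambda q(x,a)=q(x,\lambda a)$, and in the \cs-case $q(x,a)q(x,b)=q(x,ab)$ and $q(x,a)^{*}=q(x,a^{*})$. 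Second, since the left $G$-action on $X$ is free and proper, the ``division map'' sending a pair $(y,z)$ with $G\cdot y=G\cdot z$ to the unique $\gamma\in G$ with $\gamma\cdot z=y$ is well defined and continuous.

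For axiom~(a): given $\epsilon>0$, the set $\set{(x,a)\in X*\E:\|a\|<\epsilon}$ is open in $X*\E$ by upper-semicontinuity of the norm on $\E$, and it is $G$-invariant because each $\alpha_{\gamma}$ is isometric; hence its $q$-image $\set{e\in\ex:\|e\|<\epsilon}$ is open, so the norm on $\ex$ is upper-semicontinuous (and, if $\E$ is continuous, the same argument applied to $\set{(x,a):\|a\|>\epsilon}$ gives continuity of the norm). Axiom~(c) follows because $\id_{\C}\times q$ is an open surjection and the composite $(\lambda,(x,a))\mapsto[x,\lambda a]=q(x,\lambda a)$ is continuous. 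For axiom~(d), suppose $\px(e_{i})\to G\cdot x$ and $\|e_{i}\|\to 0$; it suffices to produce, from an arbitrary subnet, a further subnet converging to $0_{G\cdot x}=[x,0_{r(x)}]$. Writing $e_{i}=[x_{i},a_{i}]$ and using openness of the orbit map $X\to\gux$, I would pass to a subnet and replace each $(x_{i},a_{i})$ by a translate (which changes neither $[x_{i},a_{i}]$ nor $\|a_{i}\|$) so that $x_{i}\to x$; then $p(a_{i})=r(x_{i})\to r(x)$ and $\|a_{i}\|\to 0$, so axiom~(d) for $\E$ gives $a_{i}\to 0_{r(x)}$, whence $(x_{i},a_{i})\to(x,0_{r(x)})$ in $X*\E$ and $e_{i}=q(x_{i},a_{i})\to 0_{G\cdot x}$ by continuity of $q$.

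The main obstacle is axiom~(b) --- continuity of addition on the fibred product $\ex*_{\gux}\ex$ --- and, in the \cs-case, continuity of multiplication. Here I would introduce the Banach bundle $W=\set{(x,a,b)\in X\times\E\times\E:r(x)=p(a)=p(b)}$ over $X$ and the map $\theta:W\to\ex*_{\gux}\ex$, $\theta(x,a,b)=([x,a],[x,b])$, and show $\theta$ is a continuous open surjection, hence a quotient map. Surjectivity uses freeness (given $([x,a],[y,b])$ in a common fibre, move $y$ onto $x$ by the unique connecting $\gamma$); openness is the crux and is where I expect to spend the effort, via Fell's criterion: given a net $([x_{i},a_{i}],[y_{i},b_{i}])$ converging in $\ex*_{\gux}\ex$, lift the first coordinates through $q$, use freeness to rewrite $[y_{i},b_{i}]$ over the same base points, lift those through $q$ as well, and then invoke continuity of the division map to see that the resulting base-point translations converge to a unit --- so that, by continuity of the action on $\E$, the lifted second coordinates also converge. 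Once $\theta$ is a quotient map, continuity of addition follows from continuity of the composite $(x,a,b)\mapsto[x,a+b]=q(x,a+b)$, and the identical argument with $ab$ in place of $a+b$ handles multiplication; continuity of the involution is immediate since $(x,a)\mapsto[x,a^{*}]=q(x,a^{*})$ factors through $q$. Finally, for each $x$ the isometric bijection $a\mapsto[x,a]$ of $E(r(x))$ onto $E^{X}(G\cdot x)$ is additive, scalar-linear, and in the \cs-case multiplicative and $*$-preserving, so each fibre of $\ex$ is a Banach space, resp.\ a \cs-algebra, completing the verification.
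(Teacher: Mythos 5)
Your proposal is correct and takes essentially the same approach as the paper: the paper's proof simply checks the bundle axioms ``as in \cite{kmrw:ajm98}*{Proposition~2.15}'' and writes out only the upper-semicontinuity upgrade, via a net/isometry argument equivalent to your open-set argument for axiom~(a). Your verification of the remaining axioms (openness of the orbit map $q$, the quotient map $\theta$ for continuity of addition and multiplication, and the free-and-proper/division-map argument) just supplies the details the paper delegates to that reference; the only caveat is that openness of the orbit maps is not automatic for arbitrary groupoid actions but follows here because $G$ has a Haar system, hence an open range map.
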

\begin{proof}
  The proof proceeds by checking that the axioms for a Banach bundle
  hold just as in \cite{kmrw:ajm98}*{Proposition~2.15}.  The only
  ``upgrade'' from \cite{kmrw:ajm98} is to include the possibility
  that $p$ is merely \usc.  But if $[x_{i},a_{i}]\to [x,a]$ in $\ex$
  with $\|a_{i}\|\ge \epsilon>0$ for all $i$, then we can pass to a
  subnet, relabel, and assume that
  $(\gamma_{i}x_{i}, \alpha_{\gamma_{i}}(a_{i}))\to (x,a)$.  Since
  each $\alpha_{\gamma_{i}}$ is isometric, $\|a\|\ge \epsilon$.
\end{proof}

It will be convenient to describe the sections of a quotient bundle in
terms of the sections of the original bundle.  If $G$ acts
isometrically on $\E$ as above, then given
$\check f\in \Sec(\gux;\ex)$, since the $G$-action on $X$ is
free, there is a function $f:X\to \E$ such that $f(x) \in E(r(x))$ and
\begin{align}
  \label{eq:52}
  \check f(G\cdot x)=[x,f(x)].
\end{align}
Furthermore,
\begin{align}
  \label{eq:53}
  f(\gamma \cdot x)= \alpha_{\gamma}(f(x)).
\end{align}
Now suppose that $x_{i}\to x$.  Then
\begin{align}
  \label{eq:54}
  \check f(G\cdot x_{i})=[x,f(x_{i})]\to \check f(G\cdot x)= [x,f(x)].
\end{align}
Since $\px$ is open, we can pass to a subnet, relabel, and assume that
there are $\gamma_{i}$ such that
\begin{align}
  \label{eq:55}
  \bigl(\gamma_{i}\cdot x_{i}, \alpha_{\gamma_{i}}(f(x_{i})\bigr) \to
  \bigl(x,f(x)\bigr)\quad\text{in $X*\E$.}
\end{align}
Since the $G$-action on $X$ is free and proper, we can assume that
$\gamma_{i}\to s(x)$.  But then since the $G$ action on $X*\E$ is
continuous, $(x_{i},f(x_{i}))\to (x,f(x))$, and in particular,
$f(x_{i})\to f(x)$.  Since we can repeat this argument for any subnet
of $\sset{f(x_{i})}$ it follows that the original net converges to
$f(x)$ and $f:X\to\E$ is continuous.  As a consequence, we have the
following proposition where, as is standard, we have identified
sections $f\in \Sec(X;X*\E)$ of the pullback with continuous functions
$f:X\to\E$ such that $p(f(x))=r(x)$.
\begin{prop}
  \label{prop-sections-ax} The sections in
  $\check f\in \Sec(\gux;\ex)$ are in one-to-one correspondence with
  sections $f\in \Sec(X;X*\E)$ such that
  \begin{equation}
    \label{eq:56}
    f(\gamma \cdot x)=\alpha_{\gamma}\bigl(f(x)\bigr)
    \quad\text{for all $(\gamma,x)\in G*X$. }
  \end{equation}
  We have
  \begin{align}
    \label{eq:57}
    \check f(G\cdot x)=[x,f(x)].
  \end{align}
  In particular, we can identify $\Sec_{c}(\gux;\E^{X})$ with the
  space $\cc(\gux,X,\E,\alpha)$ of continuous functions $f:X\to\E$
  which transform according to \eqref{eq:56}, and such that the
  support of $f$ has compact image in $\gux$.
\end{prop}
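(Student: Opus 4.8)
The plan is to exhibit the claimed bijection explicitly in both directions, leaning on the paragraph immediately preceding the statement for the one step that carries genuine content. Starting from $\check f\in\Sec(\gux;\ex)$: for each $x\in X$ the value $\check f(G\cdot x)$ lies in the fibre $(\px)^{-1}(G\cdot x)=E^{X}(G\cdot x)$, and since $a\mapsto[x,a]$ is a bijection of $E(r(x))$ onto that fibre, there is a unique $f(x)\in E(r(x))$ with $\check f(G\cdot x)=[x,f(x)]$. This defines a function $f\colon X\to\E$ with $p(f(x))=r(x)$, i.e.\ a section of the pullback $X*\E$, satisfying \eqref{eq:52}. The transformation rule \eqref{eq:56} then drops out of freeness of the action: for $(\gamma,x)\in G*X$ we have, using \eqref{eq:51},
\[
[\gamma\cdot x,\alpha_\gamma(f(x))]=[x,f(x)]=\check f(G\cdot x)=\check f\bigl(G\cdot(\gamma\cdot x)\bigr)=[\gamma\cdot x,f(\gamma\cdot x)],
\]
and injectivity of $a\mapsto[\gamma\cdot x,a]$ forces $f(\gamma\cdot x)=\alpha_\gamma(f(x))$. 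Continuity of $f$ is exactly the net argument run just before the statement, which combines openness of $\px$ (the preceding lemma), Fell's criterion (Lemma~\ref{lem-fell-criterion}), and freeness and properness of the $G$-action; this is the only non-formal point, and it is already established.

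Conversely, given a section $f$ of $X*\E$ obeying \eqref{eq:56}, I would define $\check f\colon\gux\to\ex$ by $\check f(G\cdot x):=[x,f(x)]$. This is well defined, since if $G\cdot x=G\cdot x'$ then $x'=\gamma\cdot x$ for a unique $\gamma$ and $[x',f(x')]=[\gamma\cdot x,\alpha_\gamma(f(x))]=[x,f(x)]$ by \eqref{eq:56} and \eqref{eq:51}; and as $\px([x,f(x)])=G\cdot x$ it is a section. For continuity, note that $\gux$ carries the quotient topology for the orbit map $q\colon X\to\gux$, so it suffices that $\check f\circ q$ be continuous; but $\check f\circ q$ is the composite of $x\mapsto(x,f(x))$, continuous into $X*\E$ because $f$ is continuous, with the continuous orbit map $X*\E\to\ex$. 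Hence $\check f\in\Sec(\gux;\ex)$ and \eqref{eq:57} holds by construction. That $\check f\mapsto f$ and $f\mapsto\check f$ are mutually inverse is immediate from the uniqueness built into the first construction and the defining formula of the second, giving the asserted one-to-one correspondence.

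For the final assertion, observe that $\check f(G\cdot x)=0$ iff $\|[x,f(x)]\|=\|f(x)\|=0$, i.e.\ iff $f(x)=0_{r(x)}$, so $\{G\cdot x:\check f(G\cdot x)\ne 0\}=q(\{x:f(x)\ne 0\})$ and $\supp\check f=\overline{q(\{x:f(x)\ne 0\})}$. Using that the $G$-action on $X$ is proper — so that $\gux$ is Hausdorff and $q$ is a closed map — a short two-sided sandwich argument shows $\supp\check f$ is compact precisely when $\supp f$ has compact image in $\gux$, and thus the correspondence restricts to the identification of $\Sec_{c}(\gux;\E^{X})$ with $\cc(\gux,X,\E,\alpha)$. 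The main obstacle is therefore the single continuity statement for $f$ in the first step, which is why the discussion preceding the proposition does the real work; the reverse continuity comes for free from the quotient topology, and the support bookkeeping uses nothing beyond properness of the action.
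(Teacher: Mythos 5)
Your construction of the correspondence follows the paper's route: the paper's argument is precisely the paragraph preceding the statement (the pointwise definition of $f$ via freeness, the equivariance \eqref{eq:56}, and the net argument for continuity of $f$ using openness of $\px$, Fell's criterion, and freeness and properness), and it leaves the converse direction and the support bookkeeping implicit. Your converse is handled correctly: well-definedness from \eqref{eq:56} and \eqref{eq:51}, and continuity of $\check f$ from the fact that the open surjection $q\colon X\to\gux$ is a quotient map, so continuity of $\check f\circ q=(\text{orbit map})\circ(x\mapsto(x,f(x)))$ suffices.

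The one step that fails as stated is the parenthetical claim that properness of the $G$-action makes $q$ a \emph{closed} map. Orbit maps of free proper actions need not be closed: for the translation action of $\R$ on $\R^{2}$ in the first coordinate (free and proper), the closed set $\{(x,1/x):x>0\}$ has saturation $\R\times(0,\infty)$, which is not closed. So your "sandwich" argument, as justified, has a gap in the direction ``$\supp\check f$ compact $\Rightarrow$ $q(\supp f)$ compact.'' The repair is short and uses only ingredients already on the table: by \eqref{eq:56} and the fact that each $\alpha_{\gamma}$ is isometric, the nonvanishing set $N=\{x: f(x)\neq 0\}$ is $G$-invariant; continuity of $q$ gives $q(\overline{N})\subseteq\overline{q(N)}$, while openness of $q$ (Fell's criterion) together with invariance of $N$ gives the reverse inclusion, since any $q(x_{0})\in\overline{q(N)}$ is a limit of $q(x_{i})$ with $x_{i}\in N$, and a lifted subnet $\gamma_{i}\cdot x_{i}\to x_{0}$ stays in $N$. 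Hence $q(\supp f)=\supp\check f$ exactly, and the two compactness conditions coincide with no appeal to closedness of $q$ (nor, in fact, to Hausdorffness of $\gux$). With that substitution your proof is complete and agrees with, and fills in routine details omitted from, the paper's treatment.
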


We say a net $\set{\check f_{i}}$ in $\Sec_{c}(\gux;\ex)$ converges to
$\check f$ in the \ilt\ if $\check f_{i}\to \check f$ uniformly and
the supports of the $\check f_{i}$ are all contained in a fixed
compact set.  This is equivalent to saying that the corresponding
functions $f_{i} \in \cc(\gux,X,\E,\alpha)$ converge uniformly to $f$
with supports all contained in a set with compact image in $\gux$.

\subsection{Fell bundles and their
  $C^*$-algebras}\label{sec:Fell-bundles-csa}

Fell bundles over groupoids are a natural generalization of Fell's
Banach $*$-algebraic bundles from
\cite{fd:representations2}*{Chapter~VIII}.  They were introduced in
\cite{yam:xx87}.  For more details, see \cites{kum:pams98,
  muhwil:dm08}.  Roughly speaking a Fell bundle $\BB$ over a locally
compact Hausdorff groupoid $\G$ is an upper-semicontinuous Banach
bundle $p : \BB \to \G$ endowed with a continuous involution
$b \mapsto b^*$ and a continuous multiplication $(a,b) \mapsto ab$
from $\BB^{(2)}=\set{(b,b'):(p(b),p(b')\in \G^{(2)}}$ to $\BB$ such
that---with respect to the operations, actions, and inner products
induced by the involution and multiplication---the fibres
$B(x) = p^{-1}(x)$ over units $x \in \G^{(0)}$ are $C^*$-algebras and
such that each fibre $B(\gamma)$ is a
$B(r(\gamma))$--$B(s(\gamma))$-imprimitivity bimodule. We write
$\Sec_c(\G; \BB)$ for the $*$-algebra of continuous compactly
supported sections of $\BB$ under convolution and involution.

Each Fell bundle has both a full and a reduced $C^*$-algebra. The full
$C^*$-algebra $C^*(\G, \BB)$ is universal for representations of the
bundle that are continuous in the inductive-limit topology. The
reduced $C^*$-algebra is obtained from a representation of
$C_c(\G; \BB)$ on the Hilbert module $\H(\BB)$ defined as follows (see
\cites{kum:pams98, moutu:xx11, simwil:nyjm13, hol:jot17}).

The restriction of $\BB$ to a bundle over the unit space is an
upper-semicontinuous $C^*$-bundle, and we write
$\Sec_0(\G^{(0)}; \BB)$ for the $C^*$-algebra of $C_0$-sections of
this bundle. Define $\rip{C_0(\G^{(0)}, \BB)}<\cdot,\cdot>$ on
$\Sec_c(\G; \BB)$ by
$\rip{C_0(\G^{(0)}, \BB)}<f,g> = (f^* * g)|_{\G^{(0)}}$. This is a
pre-inner product, and the completion
\begin{align}\label{eq:HB}
  \H(\BB) := \overline{\Sec_c(\G; \BB)}^{\|\cdot\|_{C_0(\G^{(0)}, \BB)}}
\end{align}
is a Hilbert module. Left multiplication in $\Sec_c(\G; \BB)$ extends
to a left action $\phi : \Sec_c(\G; \BB) \to \Ll(\H(\BB))$ by
adjointable operators, and the reduced norm on $\Sec_c(\G; \BB)$ is
given by $\|f\|_r = \|\phi(f)\|_{\Ll(\H(\BB))}$.

\subsection{Twisted groupoid crossed products}
\label{sec:group-cross-prod}

There are a number of treatments of group\-oid crossed products and
twisted crossed products in the literature.  Here we use a Fell bundle
approach as in \cite{muhwil:dm08}*{\S2}.  Thus a groupoid dynamical
system $(\E,\Sigma, \auto)$ consists of a \cs-bundle $p:\E\to\go$, a
locally compact Hausdorff groupoid $\Sigma$, and isomorphisms
$\auto_{\sigma}:E(s(\sigma))\to E(r(\sigma))$ such that
$\auto_{\sigma\tau}=\auto_{\sigma}\circ \auto_{\tau}$ and such that
$\sigma\cdot e:=\auto_{\sigma}(e)$ is a continuous action of $\Sigma$
on $\E$.  Often in the notation the bundle $\E$ is replaced by the
corresponding $C_{0}(\go)$-algebra $A:=\Sec_{0}(\go;\E)$, and we still
call $(A,\Sigma,\auto)$ a groupoid dynamical system.  As in
\cite{muhwil:dm08}*{Example~2.1}, we can form a Fell bundle
$\BB=r^{*}\E=\set{(a,\sigma):p(a)=r(\sigma)}$ over $\Sigma$ with
multiplication and involution given by
\begin{gather}
  \label{eq:12}
  (a,\sigma)(b,\tau)=\bigl(a\auto_{\sigma}(b),\sigma\tau\bigr)
  \quad\text{and} \quad
  (a,\sigma)^{*}=\bigl(\auto_{\sigma}^{-1}(a),\sigma^{-1}\bigr) .
\end{gather}
Then, if $\Sigma$ has a Haar system, the crossed product
$\cs(\E,\Sigma,\auto)$ is the Fell bundle \cs-algebra
$\cs(\Sigma;\BB)$ built out of $\Sec_{c}(\Sigma;\BB)$.

To get a twisted crossed product, we first require that $\Sigma$ is a
unit space fixing extension of a subgroupoid group bundle $\A$.  That
is, we have
\begin{equation}
  \label{eq:ext}
  \begin{tikzcd}[column sep=3cm]
    \A \arrow[r,"\iota", hook] \arrow[dr,shift left, bend right = 15]
    \arrow[dr,shift right, bend right = 15]&\Sigma \arrow[r,"p", two
    heads] \arrow[d,shift left] \arrow[d,shift right]&\G
    \arrow[dl,shift left, bend left = 15] \arrow[dl,shift right, bend
    left = 15]
    \\
    &\go&
  \end{tikzcd}
\end{equation}
where $\iota$ is the inclusion map, $\G$ is a locally compact
groupoid,  and $p$ is a continuous open
surjection with kernel $\iota(\A)$ restricting to a homeomorphism of
$\go$ and $\goo$.  Notice that we can identify $\G$ with either of the
orbit spaces $\A\backslash \Sigma$ or $\Sigma/\A$ where the
$\A$-action is given by multiplication in $\Sigma$.

Let $U(E(u))\subset M(E(u))$ be the unitary group of $E(u)$, and let
$\coprod_{u\in\go} U(E(u))$ be the corresponding (algebraic) group
bundle over $\go$.  A (Green--Renault) \emph{twisting map} for $\auto$
is a unit-space fixing homomorphism
$\gtw:\A\to \coprod _{u\in\go} U(E(u))$ that induces an action by
isometric Banach space isomorphisms of $\A$ on $\E$ so that
$(a,e)\mapsto a\cdot e:=\gtw(a)e$ is continuous from $\A*\E\to \E$
such that
\begin{align}
  \label{eq:61}
  \auto_{a}(e)&=\gtw(a)e\gtw(a)^{*}\quad\text{for all $(a,e)\in
                \A*\E$, and} \\
  \gtw(\sigma a \sigma^{-1})
              &=\bar\auto_{\sigma}(\gtw(a))
                \quad\text{for all $(\sigma,a)\in \Sigma*\A$.}
\end{align}
Since $e\mapsto e^{*}$ is continuous, so is
$(a,e)\mapsto e\gtw(a)^{*}= (\gtw(a)e^{*})^{*}$.  Then we call
$(\G,\Sigma,\E,\auto,\gtw)$ a Green--Renault twisted groupoid
dynamical system.

Given $(\G,\Sigma,\E,\auto,\gtw)$, we can form a Fell bundle
$\BB=\BB(\E,\auto,\gtw)$ over $\G$ as in
\cite{muhwil:dm08}*{Example~2.5} as follows. Define a left $\A$-action
by isometric isomorphisms on
$r^{*}\E=\Sigma*\E= \set{(\sigma,e)\in\Sigma\times \E: r(\sigma) =
  p_{\E}(e)}$ by
\begin{align}
  \label{eq:10}
  a\cdot (\sigma,e)= (a\sigma,e\gtw(a)^{*}).
\end{align}
Then form a the Banach bundle quotient
$\BB=\BB(\E,\auto,\gtw)=\E^{\Sigma}$ over $\G=\A\backslash \Sigma$ by
forming the quotient $\A\backslash r^{*}\E$.  We will write
$[\sigma,e]$ for the orbit of $(\sigma,e)$ in $\BB$.  Thus
$p_{\BB}:\BB\to\G$ is given by $p_{\BB}([\sigma,e])=\dot \sigma$.

If $(\dot\sigma,\dot\tau)\in\G^{(2)}$, then we can define
\begin{equation}
  \label{eq:62}
  (\sigma,e)(\tau,f)=\bigl(\sigma\tau,e\auto_{\sigma}(f)\bigr),
\end{equation}
and compute that
\begin{align}
  \label{eq:63}
  (a\sigma,e\gtw(a)^{*})(b\tau,f\gtw(b)^{*}) = (a\sigma
  b\sigma^{-1})\cdot \bigl(\sigma\tau,e\auto_{\sigma}(f)\bigr).
\end{align}
Hence \eqref{eq:62} is well defined on elements of $\BB$ and defines a
``multiplication map'' on
$\BB^{(2)}=\set{([\sigma,e],[\tau,f])\in \BB\times\BB:
  (\dot\sigma,\dot\tau)\in \G^{(2)}}$ given by
\begin{equation}
  \label{eq:59}
  [\sigma,e][\tau,f]:= [\sigma\tau, e\auto_{\sigma}(f)].
\end{equation}
Similarly, we get a well-defined involution on $\BB$ defined by
\begin{equation}
  \label{eq:64}
  [\sigma,e]^{*}:= [\sigma^{-1},\auto_{\sigma}^{-1}(e^{*})].
\end{equation}

\begin{lemma}
  With multiplication and involution defined by \eqref{eq:59} and
  \eqref{eq:64}, respectively, $\BB(\E,\auto,\gtw)$ is a Fell bundle
  over $\G$.
\end{lemma}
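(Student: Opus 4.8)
The plan is to verify the Fell bundle axioms of Section~\ref{sec:Fell-bundles-csa} one at a time, leaning on the quotient-bundle machinery of Section~\ref{sec:upper-semic-banach}. First I would note that $\BB=\A\backslash r^{*}\E$ is precisely a quotient of the type treated there: take $G=\A$, take $X=\Sigma$, which is a free and proper left $\A$-space under left translation, and let $\A$ act on $\E$ over $\go$ by the isometric Banach-space isomorphisms $e\mapsto e\gtw(a)^{*}$ (continuity of $(a,e)\mapsto e\gtw(a)^{*}$ on $\A*\E$ was noted just before the statement). The quotient-bundle lemmas of Section~\ref{sec:upper-semic-banach} then give that $\BB$ is an upper-semicontinuous Banach bundle over $\G$, continuous when $\E$ is. Well-definedness of the multiplication \eqref{eq:59} and the involution \eqref{eq:64} on $\A$-orbits is the content of \eqref{eq:63} together with the analogous computation for $*$; the latter uses that $\gtw$ is a homomorphism, the cocycle identity $\auto_{\sigma\tau}=\auto_{\sigma}\circ\auto_{\tau}$, normality of $\A$, and the covariance relations \eqref{eq:61}, and I would record it explicitly and otherwise refer back to \eqref{eq:63}.

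Next I would dispatch the algebraic content. Associativity of \eqref{eq:59}, the identities $[\sigma,e]^{**}=[\sigma,e]$ and $\bigl([\sigma,e][\tau,f]\bigr)^{*}=[\tau,f]^{*}[\sigma,e]^{*}$, and the norm estimates are routine consequences of associativity and the involution in $\Sigma$ and in the $C^{*}$-bundle $\E$ together with the cocycle identity (so that in particular $\auto_{\sigma^{-1}}=\auto_{\sigma}^{-1}$ and $\auto_{u}=\id$ on units), exactly as in \cite{muhwil:dm08}*{Example~2.5}. For a unit $u\in\go$ one checks directly that $e\mapsto[u,e]$ is a $*$-isomorphism of $E(u)$ onto $B(\dot u)=p_{\BB}^{-1}(\dot u)$, so the fibres over units are $C^{*}$-algebras. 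For a general $\dot\sigma$ the map $e\mapsto[\sigma,e]$ identifies $B(\dot\sigma)$ with $E(r(\sigma))$, and under this identification together with the identifications $B(r(\dot\sigma))\cong E(r(\sigma))$ and $B(s(\dot\sigma))\cong E(s(\sigma))$ given by units, the left and right actions and inner products induced by \eqref{eq:59} and \eqref{eq:64} are those of the standard $E(r(\sigma))$--$E(r(\sigma))$ imprimitivity bimodule with the right-hand structure transported along the $*$-isomorphism $\auto_{\sigma}\colon E(s(\sigma))\to E(r(\sigma))$; hence $B(\dot\sigma)$ is a $B(r(\dot\sigma))$--$B(s(\dot\sigma))$-imprimitivity bimodule.

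The only point at which the upper-semicontinuous setting needs real care---and the step I expect to be the main obstacle---is continuity of the multiplication $\BB^{(2)}\to\BB$ and of the involution $\BB\to\BB$. Here I would use that the orbit map $q\colon r^{*}\E\to\BB$ is continuous and open, so Fell's Criterion (Lemma~\ref{lem-fell-criterion}) permits lifting convergent nets. Given $([\sigma_{i},e_{i}],[\tau_{i},f_{i}])\to([\sigma,e],[\tau,f])$ in $\BB^{(2)}$, pass to a subnet and choose representatives $(\sigma_{i}',e_{i}')\to(\sigma,e)$ and $(\tau_{i}',f_{i}')\to(\tau,f)$ in $r^{*}\E$ with $[\sigma_{i}',e_{i}']=[\sigma_{i},e_{i}]$ and $[\tau_{i}',f_{i}']=[\tau_{i},f_{i}]$. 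The crucial observation is that $(\dot\sigma_{i},\dot\tau_{i})\in\G^{(2)}$ forces $s(\sigma_{i})=r(\tau_{i})$ in $\Sigma$, since $p$ restricts to a homeomorphism of $\go$ onto $\goo$; as $s(\sigma_{i}')=s(\sigma_{i})$ and $r(\tau_{i}')=r(\tau_{i})$, the lifted pairs remain composable in $\Sigma$, and so is $(\sigma,\tau)$. Then continuity of multiplication, inversion and the action $\auto$ on $\Sigma$, and of the multiplication and involution on $\E$, gives $\bigl(\sigma_{i}'\tau_{i}',e_{i}'\auto_{\sigma_{i}'}(f_{i}')\bigr)\to\bigl(\sigma\tau,e\auto_{\sigma}(f)\bigr)$ in $r^{*}\E$; applying $q$ and using well-definedness to drop the primes yields $[\sigma_{i}\tau_{i},e_{i}\auto_{\sigma_{i}}(f_{i})]\to[\sigma\tau,e\auto_{\sigma}(f)]$ along the subnet. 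Since every subnet admits such a further subnet with the same limit, the original net converges, proving continuity of \eqref{eq:59}; the argument for \eqref{eq:64} is identical, using $\auto_{\sigma^{-1}}=\auto_{\sigma}^{-1}$. The difficulty is entirely in juggling the lifting of nets through the quotient with the composability bookkeeping in the extension \eqref{eq:ext}; the remaining verifications are routine and modeled on \cite{muhwil:dm08}.
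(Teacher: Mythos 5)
Your proposal is correct and follows essentially the same route as the paper: obtain the Banach bundle structure from the quotient-bundle machinery for the free and proper $\A$-action on $r^{*}\E$, treat the algebraic axioms as routine, identify the fibres over units with the $E(u)$ and the general fibre $B(\dot\sigma)$ with $E(r(\sigma))$ viewed as an imprimitivity bimodule via $\auto_{\sigma}$, exactly as in \cite{muhwil:dm08}*{Examples~2.1 and~2.5}. The only difference is that you spell out, via Fell's Criterion and the openness of the orbit map $q\colon r^{*}\E\to\BB$, the net-lifting argument for continuity of multiplication and involution, which the paper compresses into a single sentence appealing to openness and continuity of the $\Sigma$-action on $\E$.
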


\begin{proof}
  The continuity of multiplication and the involution follows from the
  openness of the projection $p_{\BB}:\BB\to \G$ and the continuity of
  the action of $\Sigma$ on $\E$.  The algebraic properties of
  multiplication are routine to verify as are axioms (a), (b), and (c)
  of \cite{muhwil:dm08}*{Definition~1.1}.

  For (d)~and~(e), recall that the fibre $B(\dot\sigma)$ of $\BB$ over
  $\dot \sigma\in\G$ is $\set{[\sigma,e]}$ equipped with the
  Banach-space structure induced by the map $e\mapsto [\sigma,e]$ for
  $e \in E(r(\sigma))$.  So for $u\in\go$, the space $B(u)$ is a
  \cs-algebra isomorphic to $E(u)$ with the induced multiplication and
  involution coming from $\BB$. This is~(d).

  For~(e), we must show that $B(\dot\sigma)$ is a
  $B(r(\sigma))\sme B(s(\sigma))$-\ib\ under the actions and inner
  products induced by $\BB$.  These actions and inner products are
  exactly those coming from viewing $E(r(\sigma))$ as a
  $E(r(\sigma))\sme E(s(\sigma)$-\ib\ using the isomorphism
  $\auto_{\sigma}^{-1}:E(r(\sigma))\to E(s(\sigma))$ (see
  \cite{muhwil:dm08}*{Example~2.1}).
\end{proof}

If $\G$ has a Haar system, then the twisted groupoid crossed product
$\csgsgtw$ is the Fell bundle \cs-algebra $\cs(\G;\BB)$ where
$\BB=\BB(\E,\auto,\gtw)$ as above.  Recall that sections
$\check f\in\Sec(\G;\BB)$ are determined by continuous functions
$f:\Sigma\to \E$ such that
\begin{align}
  \label{eq:66}
  f(a\sigma)=f(\sigma)\gtw(a)^{*}\quad\text{for all $(a,\sigma)\in \A*\E$.}
\end{align}
Then
\begin{align}
  \label{eq:67}
  \check f(\dot\sigma)=[\sigma,f(\sigma)].
\end{align}
Thus $\Sec_c(\G; \BB)$ is isomorphic to the space $\ccgsgtw$ of
continuous functions $f:\Sigma\to\E$ that satisfy~\eqref{eq:66} and
whose support has compact image in $\G$, with operations
\begin{align}
  \label{eq:68}
  f*g(\sigma)=\int_{\G} f(\tau)\auto_{\tau}(g(\tau^{-1}\sigma) \,
  d\alpha^{r(\sigma)} (\dot \tau)\quad\text{and}\quad f^{*}(\sigma)=
  \auto_{\sigma} (f(\sigma^{-1})^{*}).
\end{align}
For example,
\[
  \check f* \check g(\dot\sigma) = \int_{\G} \check f(\dot \tau)
  \check g(\dot \tau^{-1}\dot\sigma) \,d\alpha^{r(\sigma)}(\dot \tau)
  = \int_{\G} [\sigma,f(\tau)\auto_{\tau}(g(\tau^{-1}\sigma))] \,d
  \alpha^{r(\sigma)}(\dot \tau),
\]
and since $(\sigma,a)\mapsto [\sigma,a]$ is an isomorphism of Banach
spaces, this gives
\[
  \check f* \check g(\dot\sigma) = \Bigl[\sigma ,\int_{\G}
  f(\tau)\auto_{\tau}(g(\tau^{-1}\sigma) \, d\alpha^{r(\sigma)}(\dot
  \tau)\Bigr].
\]

The $I$-norm on $\ccgsgtw$ is given by
\begin{align}
  \label{eq:69}
  \|f\|_{I}=\max \Bigl\{
  \sup_{u\in\go}\int_{\G}\|f(\sigma)\|\,d\alpha^{u}(\dot \sigma) ,
  \sup_{u\in\go} \int_{\G}\|f(\sigma)\|\,d\alpha_{u}(\dot\sigma)
  \Bigr\}.
\end{align}

Therefore the Fell bundle \cs-algebra $\cs(\G;\BB)$ is the completion
of $\ccgsgtw$ universal for $I$-norm decreasing representations of
$\ccgsgtw$.

\begin{remark}
  If $\Sigma$ is a group $G$ and $\A$ a normal subgroup $N$, then
  $\gtw$ is a twisting map for $(N,G,\auto)$ as in \cite{gre:am78}.
  The extension of twists to groupoid dynamical systems comes from
  \cite{ren:jot87}*{\S3} where it was assumed that $\A$ was an abelian
  group bundle and that $\E$ was continuous.
\end{remark}

For our isomorphism result, we will need the special case of the
twisted crossed product where $\A=\go\times\T$. Such groupoids are
called either $\T$-groupoids or twists.  As in
\cite{muhwil:dm08}*{Example~2.3}, the associated full and reduced
$C^*$-algebras, $\cs(\G;\Sigma)$ and $\cs_{r}(\G;\Sigma)$ coincide
with the full and reduced $C^*$-algebras of the Fell line-bundle $\BB$
over $\G$.  In particular, both these \cs-algebras are completions of
$\C_{c}(\G;\Sigma)$ consisting of $f\in C_{c}(\Sigma)$ such that
$f(z\sigma)=zf(\sigma)$ for all $z\in\T$ and $\sigma\in\Sigma$ (note
that the complex conjugate appearing in equation~(2.3) following
\cite{muhwil:dm08}*{Example~2.3} is a typo).

Hence the reduced norm on $C_c(\G; \Sigma)$ is realized by the
representation of $C_c(\G; \Sigma)$ on the Hilbert module
$\H(\G; \Sigma)$ given by taking the completion of $C_{c}(\G; \Sigma)$
with respect to $\langle f, g\rangle := (f^* * g)|_{\G^{(0)}}$.
Restricting to the special case where the twist $\Sigma$ is the
trivial twist $\G \times \T$, we see that the reduced norm on
$C_c(\G)$ is realized by the representation of $C_c(\G)$ on the
Hilbert module $\H(\G)$ obtained as the completion of $C_c(\G)$ with
respect to $\langle f, g\rangle = (f^* * g)|_{\G^{(0)}}$.

\section{Extensions of group bundles}
\label{sec:extens-abel-group}

Let $\Sigma$ be a second-countable locally compact Hausdorff groupoid
with a Haar system $\lambda=\sset{\lambda^{u}}_{u\in\go}$. Let $\A$ be
a closed subgroupoid of the the isotropy group bundle
$\Sigma':=\set{\sigma\in\Sigma:s(\sigma)=r(\sigma)}$ such that
$\A^{(0)}=\go$. We summarize this by saying that \emph{$\A$ is a wide
  subgroup bundle of $\Sigma$}.  We let $p_{\A}=r\restr\A=s\restr\A$
be the projection from $\A$ onto $\go$ and write $A(u)$ for the fibre
of $\A$ over $u$.

We assume throughout that $\A$ has a Haar system
$\set{\beta^{u}}_{u\in\go}$.  Even though $\Sigma$ is assumed to have
a Haar system, $\A$ has a Haar system only if the fibres of $\A$ are
well-behaved as described in the following result taken from
\cite{ren:jot91}*{\S1} (see also \cite{wil:toolkit}*{Theorem~6.12}).

  \begin{lemma}\label{lem-bundle-haar}
    Let $\A$ be a closed subgroup bundle of $\Sigma$.  Then the
    following are equivalent.
    \begin{compactenum}
    \item There is a Haar system $\beta=\sset{\beta^{u}}_{u\in\go}$
      for $\A$.
    \item The projection map $p_{\A}:\A\to\go$ is open.
    \item The map $u\mapsto A(u)$ is continuous from $\go$ into the
      space of closed subgroups of $\Sigma$ in the Fell topology.
    \end{compactenum}
  \end{lemma}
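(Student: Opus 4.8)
The plan is to run the cycle of implications (a)~$\Rightarrow$~(b), (b)~$\Leftrightarrow$~(c), (b)~$\Rightarrow$~(a). The point throughout is that, since $\A$ is a \emph{group} bundle, we have $r\restr\A=s\restr\A=p_{\A}$, so ``$p_{\A}$ open'' is exactly the assertion that the range map of the groupoid $\A$ is open.

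For (a)~$\Rightarrow$~(b) I would use the standard fact that a groupoid carrying a Haar system has an open range map: given $a\in\A$ and an open neighbourhood $W\subseteq\A$ of $a$, choose $f\in\cc(\A)$ with $f\ge0$, $f(a)>0$ and $\supp f\subseteq W$; then $\varphi(u):=\int f\,d\beta^{u}$ is continuous, $\varphi(p_{\A}(a))>0$ because $\beta^{p_{\A}(a)}$ has full support $A(p_{\A}(a))$, and $\set{\varphi>0}\subseteq p_{\A}(W)$, so $p_{\A}(W)$ is a neighbourhood of $p_{\A}(a)$.

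For (b)~$\Leftrightarrow$~(c), recall that Fell-convergence $A(u_{i})\to F$ splits into a ``lower'' half --- every point of $F$ is a subnet-limit of points $a_{i}\in A(u_{i})$ --- and an ``upper'' half --- any subnet-limit of points $a_{i_{j}}\in A(u_{i_{j}})$ with $u_{i_{j}}\to u$ lies in $A(u)$. When $u_{i}\to u$, the upper half is automatic because $\A$ is closed in $\Sigma$ and $p_{\A}$ is continuous. The lower half, applied to the surjection $p_{\A}:\A\to\go$, is precisely the conclusion of Fell's Criterion (Lemma~\ref{lem-fell-criterion}), up to routine passage to subnets. Thus both (b) and (c) are equivalent to the lower half-condition, hence to each other.

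The real content is (b)~$\Rightarrow$~(a). Each fibre $A(u)$, being a closed subgroup of the isotropy group $\set{\sigma\in\Sigma:r(\sigma)=s(\sigma)=u}$, is a locally compact Hausdorff group and so carries a left Haar measure unique up to a positive scalar; the task is to choose these scalars so that $u\mapsto\int_{A(u)}f\,d\beta^{u}$ is continuous for every $f\in\cc(\A)$. I would work locally: given $u_{0}$, pick $f_{0}\in\cc(\A)$ with $f_{0}\ge0$ and $f_{0}>0$ near the identity element of $A(u_{0})$, so that $\int_{A(u_{0})}f_{0}\,d\mu>0$ for any (hence every) Haar measure $\mu$; using (c), show $u\mapsto\int_{A(u)}f_{0}\,d\mu_{u}$ is lower semicontinuous, so the integral stays positive on a neighbourhood $V$ of $u_{0}$, and there normalise $\beta^{u}$ by $\int_{A(u)}f_{0}\,d\beta^{u}=1$. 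Then prove $u\mapsto\int_{A(u)}f\,d\beta^{u}$ is continuous on $V$, deducing both its upper and lower semicontinuity from the Fell-continuity (c) of $u\mapsto A(u)$ --- which lets one transport, with arbitrarily small error, a partition of unity adapted to the compact set $\supp f\cap A(u_{0})$ onto the nearby fibres --- together with uniqueness of Haar measure to control the total mass. Finally patch: cover $\go$ by such charts $V_{\alpha}$ with continuous normalised fibrewise Haar measures $\beta^{u}_{\alpha}$, take a partition of unity $\set{\psi_{\alpha}}$ subordinate to $\set{V_{\alpha}}$, and set $\beta^{u}:=\sum_{\alpha}\psi_{\alpha}(u)\,\beta^{u}_{\alpha}$; each $\beta^{u}$ is a nonnegative combination of Haar measures on $A(u)$ with at least one positive coefficient, hence itself a left Haar measure with full support, and $u\mapsto\int f\,d\beta^{u}$ is locally a finite sum of continuous functions. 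The main obstacle is the chart-level continuity statement for the fibrewise integrals: this is where openness of $p_{\A}$ must be used quantitatively, and it is the technical heart of the result (carried out in \cite{ren:jot91}*{\S1}; see also \cite{wil:toolkit}*{Theorem~6.12}).
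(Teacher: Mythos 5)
The paper itself gives no proof of this lemma---it is imported directly from \cite{ren:jot91}*{\S1} and \cite{wil:toolkit}*{Theorem~6.12}---and your outline is precisely the standard argument underlying those sources: your (a)$\Rightarrow$(b) openness argument and the (b)$\Leftrightarrow$(c) splitting of Fell convergence into the automatic upper half (closedness of $\A$) and the lower half (Fell's Criterion) are complete and correct. You also correctly isolate the chart-level continuity of $u\mapsto\beta^{u}(f)$ in (b)$\Rightarrow$(a) as the technical heart, and your deferral of that step to the same references is exactly what the paper does, so the proposal matches the paper's treatment.
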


  Note that $\A$ acts freely and properly on the left as well as on
  the right of $\Sigma$.  Since $p_{\A}$ is open, the orbit maps for
  $\A$-actions are open \cite{wil:toolkit}*{Proposition~2.12}, so both
  orbit spaces $\A\backslash \Sigma$ and $\Sigma/\A$ are locally
  compact Hausdorff in our situation.  We say that $\A$ is
  \emph{normal} if the orbits $\A\sigma$ and $\sigma\A$ coincide for
  all $\sigma\in \Sigma$.  Note that $\A$ is normal if and only if
  $\Sigma$ acts on $\A$ by conjugation.

  If $\A$ is normal, then as subsets of $\Sigma$,
  $(\sigma_{1}\A)(\sigma_{2}\A)= \sigma_{1}\sigma_{2}\A$ whenever
  $s(\sigma_{1})=r(\sigma_{2})$.  This allows us to impose a groupoid
  structure on $\G=\Sigma/\A$ in the obvious way.  Conversely, if
  $p:\Sigma\to\G$ is a continuous, open groupoid homomorphism which
  induces a homeomorphism of $\go$ onto $\goo$, then the kernel is a
  wide normal subgroup bundle and we can identify $\go$ and $\goo$.
  We can summarize these observations as follows where as usual we
  write $\dot \sigma$ in place of $p(\sigma)=\sigma\A$.

  \begin{lemma}
    \label{lem-normal-quotient} Suppose that $\A$ is a wide normal
    subgroup bundle of $\Sigma$ with a Haar system. Consider the orbit
    space $\G= \A\backslash \Sigma =\Sigma/\A$. Define
    $\G^{(2)} = \{(\dot \sigma_1, \dot \sigma_2) : s(\sigma_1) =
    r(\sigma_2)\}$, and define a multiplication map $\G^{(2)} \to \G$
    by $(\dot \sigma_{1})(\dot \sigma_{2})=
    \sigma_{1}\sigma_{2}\A$. Then $\G$ is a locally compact groupoid,
    and $\G^{(0)} \cong \Sigma^{(0)}$ via $u \A \mapsto
    u$. Equivalently, $\A$ is a wide normal subgroup bundle of
    $\Sigma$ if and only if we have a unit space fixing groupoid
    extension \eqref{eq:ext} where $\iota$ is the inclusion map, $p$
    is the orbit map, and we identify $\go$ and $\goo$.
  \end{lemma}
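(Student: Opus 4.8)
The plan is to construct the groupoid structure on $\G$ directly from that on $\Sigma$ for the forward implication, and then to read off the converse by unwinding the definition of the extension \eqref{eq:ext}.

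For the forward direction, first note that by normality the left- and right-orbit equivalence relations on $\Sigma$ coincide, so $\A\backslash\Sigma$ and $\Sigma/\A$ are the same set with the same quotient topology; write $p$ for the common orbit map and $\dot\sigma=p(\sigma)$. Since $\A$ has a Haar system, Lemma~\ref{lem-bundle-haar} makes $p_{\A}$ open, and (as already observed) $p$ is then a continuous open surjection and $\G$ is locally compact Hausdorff. Because $\A$ fixes units, $r$ and $s$ are constant on $\A$-orbits and so descend to continuous maps $\G\to\go$; in particular $\G^{(2)}$ is a well-defined subset of $\G\times\G$ and a direct check gives $\Sigma^{(2)}=(p\times p)^{-1}(\G^{(2)})$. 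The only substantive algebraic input is the subset identity $(\sigma_1\A)(\sigma_2\A)=\sigma_1\sigma_2\A$ for composable $\sigma_1,\sigma_2$, noted just before the statement and immediate from normality: its left-hand side depends only on $\dot\sigma_1,\dot\sigma_2$, so $(\dot\sigma_1)(\dot\sigma_2):=\sigma_1\sigma_2\A$ is unambiguous, and associativity, the unit laws $\dot{r(\sigma)}\,\dot\sigma=\dot\sigma=\dot\sigma\,\dot{s(\sigma)}$, and (with $\dot\sigma^{-1}:=\dot{\sigma^{-1}}$, well defined since $(\sigma a)^{-1}\in\A\sigma^{-1}=\sigma^{-1}\A$) the inverse laws all follow formally from the corresponding facts in $\Sigma$ together with this identity. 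The unit space of $\G$ is $\set{\dot u:u\in\go}$, and $u\mapsto\dot u$ is a continuous bijection onto it whose inverse is the restriction of the descended range map $\G\to\go$, so $\goo\cong\go$ via $u\A\mapsto u$. For continuity of the operations, recall that $p$, being a continuous open surjection, is a quotient map, as is $p\times p$; since restricting an open map to the preimage of a subspace yields an open map onto that subspace, the identity above shows $(p\times p)\restr{\Sigma^{(2)}}:\Sigma^{(2)}\to\G^{(2)}$ is a continuous open surjection, hence a quotient map. Then multiplication on $\G^{(2)}$ is continuous because its composition with this quotient map equals the composition $\Sigma^{(2)}\to\Sigma\to\G$ of multiplication in $\Sigma$ followed by $p$, which is continuous; continuity of inversion on $\G$ follows the same way from inversion on $\Sigma$. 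Thus $\G$ is a locally compact groupoid.

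For the converse, given \eqref{eq:ext} put $\A=\iota(\A)=\ker p=p^{-1}(\goo)$. It is closed, since $\goo$ is closed in the Hausdorff groupoid $\G$ and $p$ is continuous, and it is a subgroupoid, since $p$ is a homomorphism. Applying $r$ and $s$ to an element of $\ker p$ and using injectivity of $p$ on $\go$ shows $\ker p\subseteq\Sigma'$, and since $p(\go)=\goo$ we get $\go\subseteq\ker p$; hence $\A$ is a wide subgroup bundle, and being a subgroupoid of the group bundle $\Sigma'$ over all of $\go$ it is itself a group bundle. Normality holds because for $(\sigma,a)\in\Sigma*\A$ we have $p(\sigma a\sigma^{-1})=p(\sigma)p(\sigma)^{-1}$, a unit, so $\sigma\A\sigma^{-1}\subseteq\A$, and applying this with $\sigma^{-1}$ in place of $\sigma$ gives equality. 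Finally $p$ is constant on $\A$-orbits and factors through a continuous bijection $\Sigma/\A\to\G$ which is open because $p$ is, so $\Sigma/\A\cong\G$ as topological groupoids; the Haar system on $\A$ is the one carried by the standing hypotheses (equivalently, it exists by Lemma~\ref{lem-bundle-haar}).

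The main obstacle is not conceptual but a matter of bookkeeping: the delicate points are verifying that the quotient-map property of $p$ passes to its restriction $\Sigma^{(2)}\to\G^{(2)}$ — which is the purpose of the identity $\Sigma^{(2)}=(p\times p)^{-1}(\G^{(2)})$ — and checking that every formula written at the level of $\A$-orbits is well defined (independent of representatives, via normality) before continuity is invoked.
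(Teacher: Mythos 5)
Your proposal is correct and follows exactly the route the paper intends: the paper states this lemma as a summary of the observations immediately preceding it (the subset identity $(\sigma_1\A)(\sigma_2\A)=\sigma_1\sigma_2\A$, openness of the orbit maps coming from Lemma~\ref{lem-bundle-haar}, and the kernel argument for the converse) and offers no further proof, and your write-up just fills in those routine verifications, including the standard quotient-map argument for continuity of the operations. No gaps worth flagging.
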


  \begin{remark}
    The isotropy bundle $\A=\Sigma' $ is always a normal subgroup
    bundle.  However, Lemma~\ref{lem-bundle-haar} implies that
    $\Sigma'$ has a Haar system only when the isotropy map
    $u\mapsto \Sigma(u):=\Sigma_{u}^{u}$ is continuous.  Nevertheless,
    there are interesting examples where $\A\subsetneq \Sigma'$ has a
    Haar system even when $\Sigma'$ does not.  A number of such
    instances are given in Section~\ref{sec:etale effective}.
  \end{remark}

  For the remainder of this section, $\Sigma$ will be a second
  countable locally compact Hausdorff groupoid with a Haar system
  $\lambda$, and $\A$ will be a wide normal subgroup bundle with a
  Haar system $\beta$.  We will write $\G$ for the groupoid
  $\Sigma/\A$ as in Lemma~\ref{lem-bundle-haar} and $p:\Sigma\to\G$
  will be the quotient map as in \eqref{eq:ext}.

  The following can be proved using the uniqueness of Haar measure on
  the $A(u)$ exactly as in \cite{mrw:tams96}*{Lemma~4.1}.

  \begin{lemma}
    \label{lem-omega} Let $\Sigma$ and $\A$ be as above.  Then there
    is a continuous homomorphism $\delta:\Sigma\to\R^{+}$, called the
    \emph{modular map for the extension~\eqref{eq:ext}}, such that
    \begin{equation}
      \label{eq:1}
      \int_{\A} f(\sigma a\sigma^{-1})
      \,d\beta^{s(\sigma)}(a)=\delta(\sigma) \int_{\A}
      f(a)\,d\beta^{r(\sigma)} (a).
    \end{equation}
    Note that if $a\in A(u)$, then $\delta(a)=\Delta_{A(u)}(a)$.
  \end{lemma}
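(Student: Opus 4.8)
The plan is to define $\delta$ fibrewise using uniqueness of Haar measure on the groups $A(u)$, and then verify it is a continuous homomorphism with the asserted value on $\A$, following the template of \cite{mrw:tams96}*{Lemma~4.1}. Since $\A$ is normal, $\Sigma$ acts on $\A$ by conjugation, so for each $\sigma\in\Sigma$ the map $c_{\sigma}\colon A(s(\sigma))\to A(r(\sigma))$ given by $c_{\sigma}(a)=\sigma a\sigma^{-1}$ is a topological group isomorphism (with inverse $c_{\sigma^{-1}}$). Hence the pushforward $(c_{\sigma})_{*}\beta^{s(\sigma)}$ is a left Haar measure on $A(r(\sigma))$, and by uniqueness of Haar measure there is a unique scalar $\delta(\sigma)\in\R^{+}$ with $(c_{\sigma})_{*}\beta^{s(\sigma)}=\delta(\sigma)\,\beta^{r(\sigma)}$. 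Unwinding this identity against a test function $f$ on $\A$ is precisely \eqref{eq:1}, so $\delta$ is the unique function satisfying that formula.

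The homomorphism property is immediate from this description: if $s(\sigma)=r(\tau)$ then $c_{\sigma\tau}=c_{\sigma}\circ c_{\tau}$, so applying pushforwards in two steps gives $\delta(\sigma\tau)\beta^{r(\sigma)}=(c_{\sigma})_{*}(c_{\tau})_{*}\beta^{s(\tau)}=\delta(\sigma)\delta(\tau)\beta^{r(\sigma)}$, whence $\delta(\sigma\tau)=\delta(\sigma)\delta(\tau)$; in particular $\delta$ is trivial on $\go$. For $a_{0}\in A(u)$, the map $c_{a_{0}}$ is an inner automorphism of $A(u)$; substituting $b=a_{0}a$ in $\int_{A(u)}f(a_{0}aa_{0}^{-1})\,d\beta^{u}(a)$ and using left-invariance of $\beta^{u}$ rewrites this as $\int_{A(u)}f(ba_{0}^{-1})\,d\beta^{u}(b)=\Delta_{A(u)}(a_{0})\int_{A(u)}f(b)\,d\beta^{u}(b)$, so $\delta(a_{0})=\Delta_{A(u)}(a_{0})$.

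The only substantive point is continuity of $\delta$. Fix $\sigma_{0}\in\Sigma$ and choose $f\in C_{c}(\A)$ with $f\ge 0$ and $f(r(\sigma_{0}))>0$; since $\beta^{u}$ has full support on $A(u)$ and $u\mapsto G(u):=\int_{\A}f\,d\beta^{u}$ is continuous (continuity of the Haar system $\beta$, Lemma~\ref{lem-bundle-haar}), we have $G(r(\sigma))>0$ for $\sigma$ in some open neighbourhood $V$ of $\sigma_{0}$, and on $V$ we have $\delta(\sigma)=F(\sigma)/G(r(\sigma))$ where $F(\sigma):=\int_{\A}f(\sigma a\sigma^{-1})\,d\beta^{s(\sigma)}(a)$. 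So it suffices to show $F$ is continuous at $\sigma_{0}$. Shrinking $V$ so that $\overline V$ is compact, the function $(\sigma,a)\mapsto f(\sigma a\sigma^{-1})$ on $\set{(\sigma,a)\in\overline V\times\A:s(\sigma)=p_{\A}(a)}$ is continuous and compactly supported, since its support is contained in the image of the compact set $\set{(\sigma,b)\in\overline V\times\supp f:r(\sigma)=p_{\A}(b)}$ under the continuous map $(\sigma,b)\mapsto(\sigma,\sigma^{-1}b\sigma)$. Therefore $F|_{V}$ is continuous, being the integral of a compactly supported continuous function against the continuous family of measures $\sigma\mapsto\beta^{s(\sigma)}$ (continuous because $\beta$ is a Haar system and $s$ is continuous; see \cite{wil:toolkit}). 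The main obstacle is exactly this last step: packaging the varying fibres $A(s(\sigma))$ together with the support bookkeeping into a clean application of continuity of a parametrized integral; once that is set up, the remaining verifications are formal.
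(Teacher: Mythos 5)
Your proof is correct and takes essentially the same approach as the paper, which simply invokes uniqueness of Haar measure on the fibres $A(u)$ and defers to \cite{mrw:tams96}*{Lemma~4.1}: you define $\delta(\sigma)$ by comparing $(c_\sigma)_*\beta^{s(\sigma)}$ with $\beta^{r(\sigma)}$ and verify the homomorphism property, the identity $\delta|_{A(u)}=\Delta_{A(u)}$, and continuity in the standard way. Your continuity argument, writing $\delta$ locally as $F(\sigma)/G(r(\sigma))$ and using continuity of integration of a compactly supported function on the fibred product against the continuous family $\sigma\mapsto\beta^{s(\sigma)}$, is exactly the routine packaging the cited lemma carries out.
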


  \begin{remark}
    \label{rem-omega-delta} Note that $\delta(\sigma)$ is the inverse
    of the similar constant $\omega(\sigma)$ used in
    \cite{mrw:tams96}, and $\delta$ has the advantage that it
    restricts to the modular function on each $A(u)$.
  \end{remark}

  We let $C_{c,p}(\Sigma)$ be the collection of continuous functions
  on $\Sigma$ such that $\supp b\cap p^{-1}(K)$ is compact for all
  $K\subset \G$ compact.

  \begin{lemma}
    \label{lem-new-bru-haar-gamma} Suppose that $\Sigma$, $\A$, $\G$,
    and $p$ are as above.
    \begin{compactenum}
    \item There is a $b\in C_{c,p}(\Sigma)$, called a \emph{Bruhat
        section}, such that
      \begin{gather}
        \label{eq:29}
        \int_{\A} b(\sigma a)\,d\beta^{s(\sigma)}(a)=1 \quad\text{for
          all $\sigma\in\Sigma$.}
      \end{gather}
    \item There is a surjection $Q:\cc(\Sigma)\to\cc(\G)$ given by
      \begin{equation}
        \label{eq:4}
        Q(f)(\dot \sigma)=\int_{\A} f(\sigma a)\,d\beta^{s(\sigma)}(a).
      \end{equation}
    \item There is a Haar system $\alpha=\sset{\alpha_{u}}_{u\in\go}$
      on $\G$ such that for all $f\in\cc(\Sigma)$ and $u\in\go$ we
      have
      \begin{equation}
        \label{eq:2}
        \int_{\Sigma}f(\sigma)\,d\lambda^{u}(\sigma) = \int_{\G}
        \int_{\A} f(\sigma a)\,d\beta^{s(\sigma)}(a)\, d\alpha^{u}(\dot \sigma).
      \end{equation}
    \end{compactenum}
  \end{lemma}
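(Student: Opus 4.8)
The plan is to build the three objects---the Bruhat section $b$, the map $Q$, and the Haar system $\alpha$---in that order, since each relies on the previous one, and then to verify the defining properties.

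First I would construct the Bruhat section. The quotient map $p\colon\Sigma\to\G$ is a continuous open surjection between second countable locally compact Hausdorff spaces, and the fibres are the $\A$-orbits, on which $\A$ acts freely and properly (by the discussion following Lemma~\ref{lem-bundle-haar}). This is exactly the setting in which a Bruhat approximate section exists: one covers $\G$ by a countable family of open sets $V_n$ with compact closure over which $p$ admits local sections with appropriate properties, takes a locally finite partition of unity $\set{\phi_n}$ on $\Sigma$ subordinate to $\set{p^{-1}(V_n)}$ with $\supp\phi_n$ having compact image in $\G$, and sets $b_0=\sum_n\phi_n$. Then $b_0\in C_{c,p}(\Sigma)$ is strictly positive on $\Sigma$, so by Lemma~\ref{lem-omega} and openness of $p_{\A}$ the function $\sigma\mapsto\int_{\A}b_0(\sigma a)\,d\beta^{s(\sigma)}(a)$ is continuous, strictly positive, and constant on $\A$-orbits; dividing $b_0$ by its pullback along $p$ yields $b$ satisfying \eqref{eq:29}. (This is the standard construction, e.g. as in \cite{wil:toolkit} or \cite{wil:crossed}*{Appendix} adapted to groupoids; I would cite it rather than reproduce it.)

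Next, given $b$, define $Q$ by \eqref{eq:4}. That $Q(f)$ is well defined and continuous follows from continuity of $f$, openness of $p_{\A}$, and the fact that $\set{\beta^u}$ is a Haar system. Compact support of the image in $\G$: $\supp Q(f)\subseteq p(\supp f)$, which is compact. Surjectivity is where $b$ enters: given $g\in\cc(\G)$, the function $f(\sigma):=b(\sigma)\,g(\dot\sigma)$ lies in $\cc(\Sigma)$ (its support is contained in $\supp b\cap p^{-1}(\supp g)$, which is compact by the definition of $C_{c,p}$ since $\supp g$ is compact), and $Q(f)(\dot\sigma)=g(\dot\sigma)\int_{\A}b(\sigma a)\,d\beta^{s(\sigma)}(a)=g(\dot\sigma)$ by \eqref{eq:29}.

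Finally, for the Haar system on $\G$, I would define $\alpha^u$ as the pushforward under $p$ of a disintegration of $\lambda^u$ along the $\A$-orbits: concretely, for $g\in\cc(\G)$ choose any $f\in\cc(\Sigma)$ with $Q(f)=g$ (possible by part~(b)) and set $\alpha^u(g):=\int_\Sigma f\,d\lambda^u$. Well-definedness---independence of the choice of $f$ with $Q(f)=g$---is the crux, and it follows from the uniqueness of Haar measure on each $A(u)$ together with the invariance of $\lambda$: if $Q(f)=0$ then $\int_{\A}f(\sigma a)\,d\beta^{s(\sigma)}(a)=0$ for every $\sigma$, and a Fubini/disintegration argument using Lemma~\ref{lem-omega} (the modular map $\delta$ accounts for the fact that left $\A$-translation on $\Sigma$ interacts with $\lambda^u$ through $\beta$) shows $\int_\Sigma f\,d\lambda^u=0$; the proof is the direct analogue of \cite{mrw:tams96}*{Lemma~4.3} (or of the classical group case in \cite{wil:crossed}). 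Positivity of $\alpha^u$ is immediate since $g\geq0$ admits a lift $f\geq0$ via $f=b\cdot(g\circ p)$; continuity of $u\mapsto\alpha^u(g)$ and the support condition follow by writing $\alpha^u(g)=\int_\Sigma b(\sigma)g(\dot\sigma)\,d\lambda^u(\sigma)$ and using that $b\cdot(g\circ p)\in\cc(\Sigma)$ and $\lambda$ is a Haar system; left invariance of $\alpha$ is inherited from left invariance of $\lambda$ by pushing forward the identity $\lambda^{r(\tau)}=\tau\cdot\lambda^{s(\tau)}$ through $p$ and using that $p$ is a groupoid homomorphism. Equation \eqref{eq:2} is then just the definition of $\alpha^u$ applied with $g=Q(f)$.

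The main obstacle is the well-definedness of $\alpha$ in part~(c): one must show the value $\int_\Sigma f\,d\lambda^u$ depends only on $Q(f)$, and this is precisely where the modular map $\delta$ from Lemma~\ref{lem-omega} is needed to match the left $\A$-action on $\Sigma$ (under which $\lambda^u$ is not invariant in the naive sense) with integration against the Haar system $\beta$ on the fibres. Everything else is a routine transcription of the classical Bruhat-section argument to the groupoid bundle setting, and I would keep those verifications brief, citing \cite{mrw:tams96}*{Lemma~4.1--4.3} and the toolkit references.
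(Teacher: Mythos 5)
Your proposal is correct and follows essentially the same route as the paper: a Bruhat section from paracompactness and partitions of unity, surjectivity of $Q$ via $f=b\cdot(g\circ p)$, and the Haar system defined by $\alpha^u(Q(f))=\lambda^u(f)$, with well-definedness identified as the crux, continuity via $\alpha^{u}(F)=\int_\Sigma F(\dot\sigma)b(\sigma)\,d\lambda^u(\sigma)$, and left invariance inherited from invariance of $\lambda$ together with well-definedness. The one step you delegate to a citation --- that $\int_{\A}f(\sigma a)\,d\beta^{s(\sigma)}(a)=0$ for all $\sigma\in r^{-1}(u)$ forces $\lambda^{u}(f)=0$ --- is carried out in the paper by exactly the mechanism you describe: insert a cut-down of $b$ normalized so that $\int_{\A}h(\sigma^{-1}a)\,d\beta^{u}(a)=1$ on $\supp f$, apply Fubini and left invariance of $\lambda$, and use the modular identity of Lemma~\ref{lem-omega} to see that $\int_{\A}f*h(a)\,d\beta^{u}(a)$ is both $0$ and equal to $\int_{\Sigma}f(\sigma)\,d\lambda^{u}(\sigma)$.
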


\begin{proof}
  By left invariance, if $f \in C_c(\Sigma)$ and
  $\dot\sigma = \dot\tau \in \G$ then
  \[\int_\A f(\sigma a)d\beta^{s(\sigma)}(a) = \int_\A f(\tau
    a)d\beta^{s(\tau)}(a).\] Hence for $f \in C_c(\Sigma)$ there is a
  well-defined map $\gamma \mapsto \bbeta^\gamma(f)$ from $\G$ to $\C$
  such that
  \begin{equation}
    \label{eq:3}
    \bbeta^{\dot\sigma}(f) =\int_{\A}f(\sigma a)\,d\beta^{s(\sigma)}(a)
    \quad\text{ for all $\sigma \in \Sigma$.}
  \end{equation}
  The collection $\bbeta=\sset{\bbeta^{\gamma}}_{\gamma\in\G}$ is a
  $p$-system of measures by \cite{wil:toolkit}*{Lemma~2.21(b)}.  The
  existence of $b$ follows from the paracompactness of $\G$ as in
  \cite{wil:toolkit}*{Proposition~3.18}.  This proves~(a) and that
  $Q(f) \in C_{c}(\G)$.  Surjectivity follows from from~(a) and we
  have established~(b).

  (c) To show that a Radon measure $\alpha^{u}$ exists satisfying
  \eqref{eq:2}, it will suffice to see that whenever
  \begin{equation}
    \label{eq:16}
    \int_{\A} f(\sigma a) \,d\beta^{s(\sigma)}(a) =0\quad\text{for
      all $\sigma\in r^{-1}(u)$}
  \end{equation}
  it follows that
  \begin{equation}
    \label{eq:17}
    \int_{\Sigma} f(\sigma)\,d\lambda^{u}(\sigma)=0.
  \end{equation}
  (Then we can define a linear functional on $\cc(\G)$ by
  $\alpha^{u}(Q(f))=\lambda^{u}(f)$.)  But if \eqref{eq:16} holds,
  then for any $h\in\ccs$,
  \begin{align}
    \label{eq:18}
    \int_{\A} f*h(a) \,d\beta^{u}(a)
    &= \int_{\A}\int_{\Sigma} f(a\sigma)h(\sigma^{-1})
      \,d\lambda^{u}(\sigma) \,d\beta^{u}(a) \\
    &= \int_{\Sigma}\Bigl( \delta(\sigma)^{-1} \int_{\A} f(\sigma a)
      \,d\beta^{s(\sigma)} (a) \Bigr) h(\sigma^{-1})
      \,d\lambda^{u}(\sigma) =0.
  \end{align}
  Taking $h$ to be a multiple of $b$ in \eqref{eq:29} by an
  appropriate function in $\ccs$, we can assume that
  \begin{align}
    \label{eq:19}
    \int_{\A}h(\sigma^{-1}a)\,d\beta^{u}(a)=1
  \end{align}
  for all $\sigma\in\supp f$.  Then the left-hand side of
  \eqref{eq:18} is exactly
  \begin{equation}
    \label{eq:20}
    \int_{\Sigma}f(\sigma)\,d\lambda^{u}(\sigma).
  \end{equation}

  If $F\in \cc(\G)$, then
  \begin{equation}
    \label{eq:21}
    \int_{\G}F(\gamma)\,d\alpha^{u}(\gamma) = \int_{\Sigma}
    F(\dot\sigma) b(\sigma) \,d\lambda^{u}(\sigma).
  \end{equation}
  Hence $u\mapsto \alpha^{u}(F)$ is continuous.  For left-invariance,
  suppose that $\eta=\dot\tau$. Then
  \begin{align}
    \label{eq:5}
    \int_{\G} F(\eta\gamma) \,d\alpha^{s(\eta)}(\gamma)
    &= \int_{\Sigma} F(p(\tau\sigma))b(\sigma) \,d\lambda^{s(\tau)}(\sigma) \\
    &= \int_{\Sigma}
      F(p(\sigma))b(\tau^{-1}\sigma)\,d\lambda^{r(\tau)}(\sigma) \\
    &= \int_{\G}F(\gamma) \,d\alpha^{r(\eta)}(\gamma).\qedhere
  \end{align}
\end{proof}

Since $\A$ has a Haar system, we can form the \cs-algebra $\cs(\A)$.
Then it is not hard to verify that $\cs(\A)$ is a $C_{0}(\go)$-algebra
with fibres $\cs(\A)(u)$ identified with the group \cs-algebras
$\cs(A(u))$.  (Since we are working in a groupoid context, we treat
$A(u)$ as a groupoid with a single unit.  This means that the modular
function $\Delta_{A(u)}$ does not appear in the formula for the
involution on $\cc(A(u))$.)  To form a groupoid dynamical system we
need to exhibit $\cs(\A)$ as the section algebra of a \cs-bundle.

Using \cite{wil:crossed}*{Theorem~C.26}, we can equip
\begin{equation}
  \label{eq:6}
  \E:=\coprod_{u\in\go}\cs(A(u))
\end{equation}
with a topology such that it becomes an upper-semicontinuous
\cs-bundle over $\go$ such that each $f\in\cc(\A)$ defines a
continuous section $\check f \in \Sec_{c}(\go;\E)$ given by
$\check f(u)(a)=f(a)$.  Then $\cs(\A)$ can be identified with
$\Sec_{0}(\go;\E)$.

For each $\sigma\in\Sigma$, we get an isomorphism
\begin{equation}
  \label{eq:7}
  \auto_{\sigma}:\cs\bigl(A(s(\sigma))\bigr)\to\cs\bigl(A(r(\sigma))
  \bigr)
\end{equation}
given on $h\in \cc(A(s(\sigma)))$ by
\begin{equation}
  \label{eq:8}
  \auto_{\sigma}(h) (a) = \delta(\sigma)h(\sigma^{-1}a\sigma).
\end{equation}

\begin{prop}[cf., \cite{goe:rmjm12}*{Proposition~2.6}]
  \label{prop-auto} With the notation just established, and writing
  $\auto=\sset{\auto_{\sigma}}$, the triple $(\E,\Sigma,\auto)$ is a
  groupoid dynamical system.
\end{prop}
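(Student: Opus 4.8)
The definition to be checked has three ingredients: that each $\auto_{\sigma}$ is a $*$-isomorphism $\cs(A(s(\sigma)))\to\cs(A(r(\sigma)))$, that $\sigma\mapsto\auto_{\sigma}$ is multiplicative (so in particular $\auto_{u}=\id$ for units $u$), and that $(\sigma,e)\mapsto\auto_{\sigma}(e)$ is continuous on $\Sigma*\E$. That $\E$ is a \cs-bundle over $\go$ with $\cs(\A)\cong\Sec_{0}(\go;\E)$ in a way sending each $f\in\cc(\A)$ to a continuous section $\check f$ has already been recorded via \cite{wil:crossed}*{Theorem~C.26}, so the bundle itself is not in question.

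For the algebraic part the plan is as follows. First, normality of $\A$ makes $a\mapsto\sigma^{-1}a\sigma$ a homeomorphism of $A(r(\sigma))$ onto $A(s(\sigma))$, so that $\auto_{\sigma}$ applied to $f|_{A(s(\sigma))}$ for $f\in\cc(\A)$ really lands in $\cc(A(r(\sigma)))$, continuity and compactness of support following from continuity of multiplication in $\Sigma$ and of $\delta$. Lemma~\ref{lem-omega} says exactly that conjugation by $\sigma$ pushes $\beta^{s(\sigma)}$ forward to $\delta(\sigma)\beta^{r(\sigma)}$; substituting this change of variables into the convolution and involution formulas on $\cc(A(s(\sigma)))$ — which carry no modular function, since each $A(u)$ is treated as a one-object groupoid — verifies that $\auto_{\sigma}$ is a $*$-homomorphism, and the same substitution shows it is $L^{1}$-isometric. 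Because $\delta\colon\Sigma\to\R^{+}$ is a homomorphism with $\delta(u)=\Delta_{A(u)}(u)=1$, one checks directly on $\cc$ that $\auto_{\sigma^{-1}}=\auto_{\sigma}^{-1}$ and $\auto_{\sigma\tau}=\auto_{\sigma}\circ\auto_{\tau}$. By universality of the full group \cs-algebras each $\auto_{\sigma}$ extends to a $*$-isomorphism of the completions, and the cocycle identity persists.

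The substantive point is continuity of the action, which I would split into two parts. The first is a density-plus-isometry principle: if $T_{\sigma}\colon E(s(\sigma))\to E(r(\sigma))$ are fibrewise isometries and $\sigma\mapsto T_{\sigma}(\check f(s(\sigma)))$ is a continuous section of $r^{*}\E$ for every $f\in\cc(\A)$, then $(\sigma,e)\mapsto T_{\sigma}e$ is continuous on $\Sigma*\E$. This follows from the standard net-convergence criterion for upper semicontinuous bundles together with upper semicontinuity of the norm: given $(\sigma_{i},e_{i})\to(\sigma_{0},e_{0})$, approximate $e_{0}$ to within $\epsilon$ by some $\check f(s(\sigma_{0}))$ — possible since $\{\check f(u):f\in\cc(\A)\}$ is dense in $E(u)$ by Lazar's theorem \cite{laz:jmaa18}*{Theorem~2.9} — transport the estimate by the isometries, and conclude $\limsup_{i}\|T_{\sigma_{i}}e_{i}-h(r(\sigma_{i}))\|\le\|T_{\sigma_{0}}e_{0}-h(r(\sigma_{0}))\|$ for every continuous section $h$ of $\E$. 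Since each $\auto_{\sigma}$ is isometric, applying this with $T_{\sigma}=\auto_{\sigma}$ reduces everything to showing that $g_{f}(\sigma):=\auto_{\sigma}(\check f(s(\sigma)))$ depends continuously on $\sigma$.

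For that last step, the plan is to realize $g_{f}$ as a local uniform limit of continuous sections. The function $(\sigma,a)\mapsto\delta(\sigma)f(\sigma^{-1}a\sigma)$ is continuous on the pullback $\Sigma*_{r}\A$ and its intersection with $K*_{r}\A$ is compact for each compact $K\subseteq\Sigma$; hence on a compact neighbourhood of any $\sigma_{0}$ it is uniformly approximable by finite sums $\sum_{j}\phi_{j}(\sigma)h_{j}(a)$ with $\phi_{j}\in\cc(\Sigma)$ and $h_{j}\in\cc(\A)$, so that $g_{f}$ is locally uniformly approximated by the continuous sections $\sigma\mapsto\sum_{j}\phi_{j}(\sigma)\check h_{j}(r(\sigma))$. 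The passage from a uniform estimate of complex-valued functions to a norm estimate in $\E$ uses $\|k\|_{\cs(A(u))}\le\|k\|_{L^{1}(A(u),\beta^{u})}$ together with local boundedness of the Haar masses $u\mapsto\beta^{u}(L)$ coming from continuity of the Haar system. I expect this final step — proving $g_{f}$ is a continuous section of $r^{*}\E$ — to be the main obstacle, precisely because the fibres of $\E$ vary with $u$ and the only access to the bundle topology is through the sections $\check h$; the $L^{1}$-versus-\cs-norm estimate combined with continuity of $\{\beta^{u}\}$ is what turns a routine Stone--Weierstrass approximation on a compact set into genuine bundle continuity. Everything else, including reading off the case $\delta\equiv1$ from \cite{goe:rmjm12}*{Proposition~2.6}, is bookkeeping with the modular map $\delta$.
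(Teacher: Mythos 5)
Your proposal is correct, but it takes a genuinely different route to the one step the paper treats as non-routine. Like the paper, you dispose of the algebraic part (each $\auto_{\sigma}$ a $*$-isomorphism, $\auto_{\sigma\tau}=\auto_{\sigma}\circ\auto_{\tau}$) by the change-of-variables identity of Lemma~\ref{lem-omega}; the paper simply calls this routine. For continuity of $(\sigma,e)\mapsto\auto_{\sigma}(e)$, however, the paper does not argue at the bundle level at all: it identifies the pullbacks $r^{*}(\cs(\A))$ and $s^{*}(\cs(\A))$ with $C_{0}$-sections of $r^{*}\E$ and $s^{*}\E$, observes that $\auto$ induces an isomorphism between these pullback section algebras, and then invokes \cite{muhwil:nyjm08}*{Lemma~4.3} (see also \cite{kmrw:ajm98}*{Lemma~2.13}) to conclude continuity of the action. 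You instead prove the continuity by hand: the density-plus-isometry reduction via the net criterion for upper-semicontinuous bundles (\cite{wil:crossed}*{Proposition~C.20}), followed by showing $\sigma\mapsto\auto_{\sigma}(\check f(s(\sigma)))$ is a continuous section of $r^{*}\E$ by approximating $(\sigma,a)\mapsto\delta(\sigma)f(\sigma^{-1}a\sigma)$ by sums of elementary tensors on compacta and converting the uniform estimate into a $\cs$-norm estimate through $\|\cdot\|_{\cs}\le\|\cdot\|_{L^{1}}$ and local boundedness of $u\mapsto\beta^{u}(L)$. This is exactly the technique the paper itself deploys for the analogous statement about the $\gtw$-action in Lemma~\ref{lem-joint-cty}, so your argument is certainly in the spirit of the text; what the paper's citation buys is brevity (the tensor-density and norm-comparison work is packaged inside the quoted lemma), while your version buys a self-contained proof that makes visible where normality, the modular map, and continuity of the Haar system $\beta$ actually enter. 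Two small points: the density of $\{\check f(u):f\in\cc(\A)\}$ in $E(u)=\cs(A(u))$ comes from the construction of $\E$ via \cite{wil:crossed}*{Theorem~C.26} (the fibres are quotients of $\cs(\A)$ and $\cc(\A)$ is dense there), not really from Lazar's selection theorem \cite{laz:jmaa18}, which only guarantees enough sections for a general bundle; and in your limsup formulation of the convergence criterion you should transport the approximation through a genuinely continuous map into $\E$, namely $\sigma\mapsto\auto_{\sigma}(\check f(s(\sigma)))$ itself, which is precisely what your second step supplies, so the two halves must be quoted in that order. Neither point is a gap.
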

\begin{proof}
  We have already established that each $\auto_{\sigma}$ is an
  isomorphism, and it is easy to check that
  $\auto_{\sigma\tau}=\auto_{\sigma}\circ \auto_{\tau}$.  It only
  remains to check the continuity of
  $(\sigma, s) \mapsto \auto_{\sigma}(s)$ from $\Sigma*\E$ to $\E$.
  We establish this using \cite{muhwil:nyjm08}*{Lemma~4.3} (see also
  \cite{kmrw:ajm98}*{Lemma~2.13}).

  We must consider the pullbacks
  \begin{equation}
    \label{eq:9}
    r^{*}\bigl(\cs(\A)\bigr)=\Sec_{0}(\Sigma;r^{*}\E)\quad\text{and}
    \quad
    s^{*}\bigl(\cs(\A)\bigr) = \Sec_{0}(\Sigma;s^{*}\E).
  \end{equation}
  But we can clearly identify sections in $\Sec(\Sigma;r^{*}\E)$ with
  continuous functions $f:\Sigma\to \E$ such that
  $p_{\E}(f(\sigma))=r(\sigma)$, and similarly for sections of
  $s^{*}\E$.  Hence we can define
  $\auto:\Sec_{0}(\Sigma;r^{*}\E)\to\Sec_{0}(\Sigma;s^{*}\E)$ by
  $\auto(f)(\sigma)= \auto_{\sigma}\bigl(f(\sigma)\bigr)$.  Then
  $\auto$ is an isomorphism, and the continuity of the action follows
  from \cite{muhwil:nyjm08}*{Lemma~4.3}.
\end{proof}

If $t\in A(u)$, then we can define $\gtw(t):\cc(A(u))\to\cc(A(u))$ by
\begin{equation}
  \label{eq:11}
  \gtw(t)h(a)=\delta(t)^{\half}h(t^{-1}a) = \Delta_{A(u)}(t)^{\half}
  h(t^{-1} a).
\end{equation}
Since $(\gtw(t)h)^{*}*(\gtw(t)k)=h^{*}*k$, it follows that $\gtw(t)$
extends to a unitary in the multiplier algebra
$M\bigl(\cs(A(u))\bigr)$.  It is straightforward to check that
$\gtw(tt')=\gtw(t)\gtw(t')$.

\begin{lemma}
  \label{lem-joint-cty} The action of $\A$ on $\E$ is continuous.
  That is, the map $(t,s)\mapsto \gtw(t)s$ is continuous from
  $\A*\E\to \E$.
\end{lemma}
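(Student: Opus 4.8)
The plan is to mirror the proof of Proposition~\ref{prop-auto} and check continuity of the action ``on sections''. Since $\A$ is a group bundle over $\go$ we have $r|_{\A}=s|_{\A}=p_{\A}$, so both pullbacks $r^{*}\E$ and $s^{*}\E$ over $\A$ coincide with $\A*\E=p_{\A}^{*}\E$; and for each $t\in\A$ the map $s\mapsto\gtw(t)s$ is left multiplication by the unitary multiplier $\gtw(t)$ of $\cs(A(p_{\A}(t)))$, hence an \emph{isometric} linear bijection of the fibre $E(p_{\A}(t))$. By \cite{muhwil:nyjm08}*{Lemma~4.3} (see also \cite{kmrw:ajm98}*{Lemma~2.13}), the map $(t,s)\mapsto\gtw(t)s$ is then continuous from $\A*\E$ to $\E$ provided the induced map
\[
  \bar\gtw\colon \Sec_{0}(\A;p_{\A}^{*}\E)\to\Sec_{0}(\A;p_{\A}^{*}\E),
  \qquad \bar\gtw(\xi)(t)=\gtw(t)\xi(t),
\]
is well defined, that is, carries continuous sections to continuous sections; being fibrewise isometric it is then automatically an isometric $C_{0}(\A)$-linear bijection, and the action is continuous.

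To see that $\bar\gtw(\xi)$ is a continuous section when $\xi$ is, note that $\bar\gtw$ preserves the supremum norm, so by the standard local approximation of continuous sections of a Banach bundle, together with the $C_{0}(\A)$-linearity of $\bar\gtw$, it is enough to check that $\bar\gtw(\xi_{f})$ is continuous for the sections $\xi_{f}(t):=\check f(p_{\A}(t))$ with $f\in\cc(\A)$; these are continuous (they are $\check f\circ p_{\A}$), and since $\cc(\A)$ is dense in $\cs(\A)=\Sec_{0}(\go;\E)$ the values $\xi_{f}(t)$ are dense in $E(p_{\A}(t))$ for each $t$. For such an $f$, the element $\gtw(t)\check f(p_{\A}(t))\in\cs(A(p_{\A}(t)))$ is represented by the function $a\mapsto\delta(t)^{\half}f(t^{-1}a)$ in $\cc(A(p_{\A}(t)))$. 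These are exactly the fibrewise restrictions of the single function $\Phi\colon\A^{(2)}\to\C$ (with $\A^{(2)}=\{(t,a)\in\A\times\A:p_{\A}(t)=p_{\A}(a)\}$) given by $\Phi(t,a)=\delta(t)^{\half}f(t^{-1}a)$, which is continuous because $\delta$ is a continuous homomorphism, $(t,a)\mapsto t^{-1}a$ is continuous on $\A^{(2)}$ and $f$ is continuous; and its support is locally uniformly compact in the fibre direction: if $t$ lies in a compact set $C$ then $\supp\Phi(t,\cdot)$ is contained in the compact set $\{tk:(t,k)\in C\times\supp f,\ p_{\A}(t)=p_{\A}(k)\}$, a continuous image of a compact subset of $\A^{(2)}$.

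It remains to know that such a $\Phi$ induces a continuous map $\A\to\E$, $t\mapsto[\Phi(t,\cdot)]$. This is the parametrised form of the fact that each $g\in\cc(\A)$ yields a continuous section $\check g\in\Sec_{c}(\go;\E)$ for the topology on $\E$ furnished by \cite{wil:crossed}*{Theorem~C.26}, and it follows once more from \cite{muhwil:nyjm08}*{Lemma~4.3}, now applied to $p_{\A}\colon\A\to\go$ in place of $\Sigma$. \emph{This last step is the main obstacle.} One must verify convergence in the \emph{upper-semicontinuous} bundle topology of $\E$, not merely fibrewise or in norm: if $t_{i}\to t$ in $\A$ with $p_{\A}(t_{i})\to p_{\A}(t)$, one needs $\gtw(t_{i})\check f(p_{\A}(t_{i}))\to\gtw(t)\check f(p_{\A}(t))$ in $\E$, and the difficulty is that ``left translation by $t$'' on the fibre $A(p_{\A}(t))$ admits no canonical extension to nearby fibres, so one cannot simply compare the translates with a single fixed section. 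Replacing the family of translates by the honest continuous function $\Phi$ on $\A^{(2)}$ is precisely what resolves this: it permits local approximation by genuine elements of $\Sec_{c}(\go;\E)$ (via a Bruhat section and the openness of $p_{\A}$), after which upper semicontinuity of the norm delivers the convergence.
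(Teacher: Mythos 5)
Your overall architecture is sound and in fact runs parallel to the paper's proof: reducing continuity of $(t,s)\mapsto\gtw(t)s$ to a statement about sections of $p_{\A}^{*}\E$, reducing further (via the fibrewise density of $\{\check f(p_{\A}(t)):f\in\cc(\A)\}$ and the fact that each $\gtw(t)$ is isometric) to the pullback sections $\xi_{f}$, and identifying $\gtw(t)\check f(p_{\A}(t))$ with the fibre functions of $\Phi(t,a)=\delta(t)^{\half}f(t^{-1}a)$ is exactly the paper's device (there a cutoff $\phi\in\cc^{+}(\A)$ is inserted so that the resulting function lies in $\cc(\A*\A)$). The genuine gap is in the step you yourself flag as ``the main obstacle'': you never prove that such a continuous, suitably supported $\Phi$ on $\A*\A$ induces a continuous map $t\mapsto\Phi(t,\cdot)$ into $\E$, i.e.\ a continuous section of $p_{\A}^{*}\E$. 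The appeal to \cite{muhwil:nyjm08}*{Lemma~4.3} ``applied to $p_{\A}\colon\A\to\go$ in place of $\Sigma$'' does not deliver this: that lemma converts continuity of a fibred family of \emph{maps} between two bundles over the same base into the statement that it carries already-continuous sections to continuous sections; here the issue is precisely to verify that a given fibred family of \emph{elements} is a continuous section, so the appeal is circular. Your closing sketch (``local approximation by genuine elements of $\Sec_{c}(\go;\E)$ via a Bruhat section and the openness of $p_{\A}$'') is asserted rather than executed, and the tools named are not the relevant ones --- a Bruhat section concerns the extension $p\colon\Sigma\to\G$ and plays no role at this point.

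What is missing is exactly the paper's argument here: for an elementary tensor $F(t,a)=f(t)g(a)$ the associated section is $t\mapsto f(t)\check g(p_{\A}(t))$, which is continuous because $\check g\in\Sec(\go;\E)$; finite sums of such tensors are dense in $\cc(\A*\A)$ in the inductive-limit topology, and since $\|h\|_{\cs(A(u))}\le\|h\|_{\infty}\,\beta^{u}(K)$ with $u\mapsto\beta^{u}(K)$ bounded on compacta, inductive-limit convergence yields uniform convergence of the corresponding sections, whose uniform limit is again continuous; hence every $F\in\cc(\A*\A)$ gives a continuous section of $p_{\A}^{*}\E$. Alternatively your own sketch can be completed without elementary tensors: extend $\Phi(t_{0},\cdot)$ to some $g\in\cc(\A)$ by Tietze, use a compactness/subnet argument to show $\sup_{a\in A(p_{\A}(t))}|\Phi(t,a)-g(a)|\to 0$ as $t\to t_{0}$, and then the $I$-norm bound together with \cite{wil:crossed}*{Proposition~C.20} gives $\Phi(t,\cdot)\to\Phi(t_{0},\cdot)$ in $\E$. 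One of these arguments must actually be carried out; as written, the only non-routine content of the lemma is left unproved.
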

\begin{proof} Suppose that we have a net
  $(t_{i},s_{i})\to (t_{0},s_{0})$ in $\A*\E$.  We need to show that
  $\gtw(t_{i})s_{i}\to \gtw(t_{0})s_{0}$ in $\E$.  Let
  $u_{i}=p_{\A}(t_{i})=p_{\E}(s_{i})$.  Using
  \cite{wil:crossed}*{Proposition~C.20}, it will suffice to show that
  given $\epsilon>0$, there are $s'_{i}\to s'_{0}$ in $\E$ such that
  $p_{\E}(s'_{i})=u_{i}$ and that we eventually have
  $\|s'_{i}-\gtw(t_{i})s_{i}\|<\epsilon$.

  Let $f\in \cc(\A)$ be such that
  $\|\check f(u_{0})-s_{0}\|<\epsilon$.  Then we eventually have
  $\|\check f(u_{i})-s_{i}\|<\epsilon$.  Since $\A$ acts by unitaries,
  we eventually have
  $\|\gtw(t_{i})\check f(u_{i}) -\gtw(t_{i})s_{i}\|<\epsilon$.

  Therefore it will suffice to show that
  $s'_{i}:=\gtw(t_{i})\check f(u_{i}) \to s'_{0}:=\gtw(t_{0})\check
  f(u_{0})$ in $\E$.  For this, we form the pullback bundle
  $p_{\A}^{*}\E=\set{(t,s)\in \A\times \E:p_{\A}(t)=p_{\E}(s)}$.  If
  $F\in \cc(\A*\A)$, then we get a section of $p_{\A}^{*}\E$ given by
  $\check F(t)=(t,F(t,\cdot))\in \cc(A(p_{\A}(t))$.  If
  $F(t,a)=f(t)g(a)$ for $f,g\in \cc(\A)$, then we clearly have
  $\check F\in \Sec(\A;p_{\A}^{*}\E)$.  Since finite sums of such
  functions are dense in the inductive-limit topology on $\cc(\A*\A)$,
  we have $\check F\in \Sec(\A;p_{\A}^{*}\E)$ for all
  $F\in \cc(\A*\A)$.

  We can assume that all the $t_{i}$ are in a compact neighborhood $V$
  of $t_{0}$.  Then if $\phi\in\cc^{+}(\A)$ is such that
  $\phi\equiv 1$ on $V$, then $F(t,a) = \phi(t)\gtw(t)f(a)$ defines an
  element of $\cc(\A*\A)$ and
  $\check F(t_{i})=(t_{i},\gtw(t_{i})\check f(u_{i}))$.  Since
  projection on the second factor is continuous from $p_{\A}^{*}\E$ to
  $\E$, the result follows.
\end{proof}

Since $h\gtw(t)^{*}(a)=\delta(t)^{\half}h(at)$, it is routine to check
that $\auto_{t}(s) =\gtw(t) s \gtw(t)^{*}$ for all $(t,s)\in\A*\E$,
and that
$\gtw(\sigma t \sigma^{-1})=
\overline{\auto}_{\sigma}\bigl(\gtw(t)\bigr)$ for all
$(\sigma,t)\in \Sigma*\A$. It follows from Lemma~\ref{lem-joint-cty}
that $\gtw$ is a twisting map for $(\E,\Sigma,\auto)$.  Then we can
form the twisted groupoid crossed product $\csgsgtw$.  Recall that as
described in Section~\ref{sec:group-cross-prod}, the later is the Fell
bundle \cs-algebra for the Fell bundle
$\BB(\E,\auto,\gtw)=\E^{\Sigma}$ associated to the twist.  Hence
$\BB(\E,\auto,\gtw)$ is the quotient of
$r^{*}\E=\set{(\sigma,a)\in \Sigma\times\A:r(\sigma)=p_{\E}(a)}$ by
the $\A$-action $t\cdot (\sigma,a)=(t\sigma, a\gtw(t)^{*})$.

We can identify $\Sec_{c}(\G;\BB(\E,\auto,\gtw))$ with the collection
$\ccgsgtw$ of continuous functions $f:\Sigma\to \E$ such that
$p_{\E}(f(\sigma))=r(\sigma)$ and
\begin{equation}
  \label{eq:70}
  f(a\sigma)=f(\sigma)\gtw(a)^{*}
\end{equation}
whose support has compact image in $\G$.  The $*$-algebra structure on
$\ccgsgtw$ is given by
\begin{align}
  \label{eq:68a}
  f*g(\sigma)=\int_{\G} f(\tau)\auto_{\tau}(g(\tau^{-1}\sigma)) \,
  d\alpha^{r(\sigma)} (\dot \tau)\quad\text{and}\quad f^{*}(\sigma)=
  \auto_{\sigma} (f(\sigma^{-1})^{*}).
\end{align}

If $f\in\cc(\Sigma)$ and $\sigma\in\Sigma$, we let $\renj(f)(\sigma)$
be the element of $\cc(A(r(\sigma)))$ given by
$a\mapsto \delta(\sigma)^{\half}f(a\sigma)$.  In particular,
$j(f)\in \Sec_{c}(\Sigma;r^{*}\E)$ and a quick computation verifies
that it satisfies \eqref{eq:70} and is an element of $\ccgsgtw$.

\begin{remark}\label{rem-ilt}
  Let $\sset{g_{i}}$ be a net in $\ccgsgtw$.  We say that $g_{i}\to g$
  in the inductive-limit topology if $g_{i}\to g$ uniformly and the
  supports of the $g_{i}$ are all contained in some $B$ such that
  $p(B)$ is compact in $\G$.  This implies that
  $\check g_{i}\to\check g$ in the inductive-limit topology on
  $\Sec_{c}(\G;\BB(\E,\auto,\gtw))$.  Now suppose that $f_{i}\to f$ in
  the inductive-limit topology on $\Sigma$ so that there is a compact
  set $K\subset \Sigma$ such that $\supp f_{i}\subset K$ for all $i$
  and $f_{i}\to f$ uniformly.  We claim that $j(f_{i})\to j(f)$ in the
  inductive-limit topology on $\ccgsgtw$.  Certainly we have the
  supports of the $j(f_{i})$ all contained in the image of $K$.
  Moreover if $\sigma \in K$, then
  $\supp j(f_{i})(\sigma)\subset K^{-1}K\cap \A$.  Since
  $u\mapsto \beta^{u}(K^{-1}K\cap \A)$ is bounded and $\delta$ is
  bounded on $K$, it follows that $j(f_{i})\to j(f)$ uniformly on $K$.
  Since $\|j(f_{i})(\sigma) - j(f)(\sigma)\|$ depends only on
  $\dot\sigma$, the claim follows.
\end{remark}

\begin{lemma}
  \label{lem-homo} The map $f\mapsto \renj(f)$ is a $*$-homomorphism
  of $\cc(\Sigma)$ into $\ccgsgtw$ and the range of $\renj$ is dense
  in $\ccgsgtw$ in the inductive limit topology.
\end{lemma}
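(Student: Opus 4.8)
The plan is a direct computation for the first assertion and a Bruhat‑section‑plus‑averaging argument for density. That $\renj(f)\in\ccgsgtw$ is already noted in the text. For the algebraic structure, unwinding $\renj(f)\ast\renj(g)(\sigma)$ through the convolution formula \eqref{eq:68a} on $\ccgsgtw$, the definitions $\renj(h)(\tau)(a)=\delta(\tau)^{\half}h(a\tau)$ and $\auto_\tau(k)(a)=\delta(\tau)k(\tau^{-1}a\tau)$, the (modular‑factor‑free) convolution on the fibre algebras $\cc(A(u))$, and the fact that $\delta$ is a homomorphism, one gets, for $a\in A(r(\sigma))$,
\[
  \renj(f)\ast\renj(g)(\sigma)(a)=\delta(\sigma)^{\half}\int_{\G}\delta(\tau)\!\int_{A(r(\sigma))}f(b\tau)\,g\bigl(\tau^{-1}b^{-1}a\sigma\bigr)\,d\beta^{r(\sigma)}(b)\,d\alpha^{r(\sigma)}(\dot\tau).
\]
The substitution $b\mapsto\tau b\tau^{-1}$ in the inner integral, together with Lemma~\ref{lem-omega} (whose factor $\delta(\tau)^{-1}$ cancels the $\delta(\tau)$ above), turns the inner integral into $\int_{A(s(\tau))}f(\tau c)\,g(c^{-1}\tau^{-1}a\sigma)\,d\beta^{s(\tau)}(c)$, and Lemma~\ref{lem-new-bru-haar-gamma}(c) collapses the iterated integral to $\int_{\Sigma}f(\rho)\,g(\rho^{-1}a\sigma)\,d\lambda^{r(\sigma)}(\rho)=(f\ast g)(a\sigma)$; hence the expression equals $\delta(\sigma)^{\half}(f\ast g)(a\sigma)=\renj(f\ast g)(\sigma)(a)$. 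For the involution, $\renj(f)^{*}(\sigma)=\auto_{\sigma}\bigl(\renj(f)(\sigma^{-1})^{*}\bigr)$ together with the fibrewise relation $h^{*}(a)=\overline{h(a^{-1})}$ and $\delta(\sigma^{-1})=\delta(\sigma)^{-1}$ gives $\renj(f)^{*}(\sigma)(a)=\delta(\sigma)^{\half}\overline{f((a\sigma)^{-1})}=\renj(f^{*})(\sigma)(a)$.

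For density I would use an averaging operator. For a section $k$ of $r^{*}\E$ with compact support in $\Sigma$, put $(Pk)(\sigma)=\int_{A(r(\sigma))}k(a^{-1}\sigma)\,\gtw(a)^{*}\,d\beta^{r(\sigma)}(a)$; a change of variables in $A(r(\sigma))$ shows $Pk$ satisfies the covariance relation \eqref{eq:70}, so $Pk\in\ccgsgtw$, and $P$ is linear and continuous in the sense that $k_{n}\to k$ uniformly with all supports inside a fixed compact $L\subset\Sigma$ implies $Pk_{n}\to Pk$ uniformly. Composing the Bruhat section of Lemma~\ref{lem-new-bru-haar-gamma}(a) with inversion produces $b'\in C_{c,p}(\Sigma)$ with $\int_{A(r(\sigma))}b'(a^{-1}\sigma)\,d\beta^{r(\sigma)}(a)=1$ for all $\sigma$. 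Given $g\in\ccgsgtw$, the section $\sigma\mapsto b'(\sigma)g(\sigma)$ then has compact support in $\Sigma$ (it lies in $\supp b'\cap p^{-1}(K)$, $K=\overline{\{\dot\sigma:g(\sigma)\ne0\}}$), and using $g(a^{-1}\sigma)=g(\sigma)\gtw(a)$ and the normalization of $b'$ one computes $P(b'g)=g$.

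It remains to hit $P(b'g)$ by $\renj$ up to the inductive‑limit topology. For $\phi\in\cc(\Sigma)$ and $h\in\cc(\A)$, the section $k_{\phi,h}\colon\sigma\mapsto\phi(\sigma)\,\check h(r(\sigma))$ of $r^{*}\E$ has fibre values in $\cc(A(r(\sigma)))$, so $Pk_{\phi,h}$ is function‑valued with support having compact image in $\G$; the function $f_{\phi,h}(\tau):=\delta(\tau)^{-\half}(Pk_{\phi,h})(\tau)(e_{r(\tau)})$ is continuous on $\Sigma$ with $\supp f_{\phi,h}\subseteq(\supp h)(\supp\phi)$ compact, and a short covariance computation gives $\renj(f_{\phi,h})=Pk_{\phi,h}$; by linearity the same holds for finite sums $k_{n}=\sum_{i}k_{\phi^{(n)}_{i},h^{(n)}_{i}}$. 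Now approximate $b'g$ in the inductive‑limit topology of $\Sec_{c}(\Sigma;r^{*}\E)$ by such finite sums with all supports inside a fixed compact set; this is possible because $\E$ was built from the $\cc(\A)$‑sections $\check h$ (\cite{wil:crossed}*{Theorem~C.26}). Then $Pk_{n}=\renj(f_{n})$ for $f_{n}\in\cc(\Sigma)$, and by continuity of $P$ we get $Pk_{n}\to P(b'g)=g$ in the inductive‑limit topology, exhibiting $g$ in the inductive‑limit closure of $\renj(\cc(\Sigma))$.

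The homomorphism identities are routine bookkeeping with $\delta$ and the Haar systems; the delicate part is the support and continuity control in the density step. The continuity of $P$ above rests on the uniform bound $\sup_{\sigma}\beta^{r(\sigma)}\bigl(\{a\in A(r(\sigma)):a^{-1}\sigma\in L\}\bigr)<\infty$, which holds because $\sigma\mapsto\int_{\A}\psi(a^{-1}\sigma)\,d\beta^{r(\sigma)}(a)$ (for $\psi\in\cc^{+}(\Sigma)$ with $\psi\ge 1$ on $L$) is a continuous, left‑$\A$‑invariant function whose support has compact image in $\G$, hence bounded (cf.\ \cite{wil:toolkit}*{Lemma~2.21}). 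One must also check that $f_{n}(\tau)=\delta(\tau)^{-\half}(Pk_{n})(\tau)(e_{r(\tau)})$ genuinely has compact support in $\Sigma$ — not merely compact image in $\G$ — which is exactly where the compactness of the supports of the $\phi^{(n)}_{i}$ and $h^{(n)}_{i}$ is used. Verifying the modified Bruhat normalization for $b'$ and that $P$ really lands in $\ccgsgtw$ (the covariance computation) are the remaining technical points.
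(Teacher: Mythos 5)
Your $*$-homomorphism computation is, up to notation, the computation in the paper: the same change of variables in the $\A$-integral, the same use of Lemma~\ref{lem-omega}, and the disintegration of $\lambda^{u}$ from Lemma~\ref{lem-new-bru-haar-gamma}(c); that half is fine. For density you take a genuinely different route. The paper argues by pointwise approximation and patching: for each $\sigma$ it chooses $f_{\sigma}\in\cc(\Sigma)$ with $\|\renj(f_{\sigma})(\sigma)-F(\sigma)\|<\epsilon$, upgrades this to a neighbourhood $V_{\dot\sigma}$ using upper semicontinuity of the norm, and sums against a partition of unity on $\G$ subordinate to a cover of $p(\supp F)$ inside a fixed compact $K$, which gives uniform $\epsilon$-approximation with support control and hence inductive-limit density. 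You instead build the averaging operator $P$, use an inverted Bruhat section to write every $g\in\ccgsgtw$ exactly as $P(b'g)$, observe that $P$ applied to elementary tensors $\phi\cdot(\check h\circ r)$ lands in the range of $\renj$ (your covariance identity $F(\sigma)(a)=\delta(a)^{-\half}F(a\sigma)(e_{r(\sigma)})$ in fact shows $\renj$ hits every function-valued covariant section with the appropriate continuity and support), and finish using the inductive-limit continuity of $P$ together with density of elementary tensors in $\Sec_{c}(\Sigma;r^{*}\E)$. Your route buys more: an exact factorization $g=P(b'g)$ and an explicit description of which sections lie in the range of $\renj$, whereas the paper's partition-of-unity argument is shorter, needs less machinery, and leaves its own pointwise step (existence of $f_\sigma$) implicit. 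The cost on your side is a handful of routine verifications that you should record to make the argument airtight: that the $\cs(A(u))$-valued integral defining $Pk_{\phi,h}(\tau)$ coincides with its pointwise formula (so evaluation at the unit is legitimate; compare Riemann sums in the $I$-norm), that $Pk$ is continuous for general $k$ if you really want $Pk\in\ccgsgtw$ in general (you can sidestep this, since the only instances you use are $P(b'g)=g$ and $Pk_{n}=\renj(f_{n})$, which are continuous for free), and the standard inductive-limit density of finite sums $\sum_{i}\phi_{i}\cdot(\check h_{i}\circ r)$ in $\Sec_{c}(\Sigma;r^{*}\E)$, which is the section-density result of \cite{wil:crossed}*{Appendix~C} rather than Theorem~C.26 itself. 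With those details in place both arguments produce approximants whose supports have image in a fixed compact subset of $\G$, so both yield the stated inductive-limit density.
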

\begin{proof}
  Using \eqref{eq:68a} we have
  \begin{align}
    \label{eq:25}
    \renj(f*g)(\sigma)(a')
    &= \delta(\sigma)^{\half} f*g(a'\sigma) \\
    &= \delta(\sigma)^{\half} \int_{\Sigma} f(\tau) g(\tau^{-1} a'\sigma)
      \,d\lambda^{r(\sigma)} (\tau) \\
    &= \delta(\sigma)^{\half}\int_{\G}\int_{\A} f(\tau a)
      g(a^{-1}\tau^{-1} a'\sigma) \,\beta^{s(\tau)}(a)
      \,d\alpha^{r(\sigma)} (\dot \tau) \\
    &= \delta(\sigma)^{\half} \int_{\G}\delta(\tau) \int_{\A}
      f(a\tau) g(\tau^{-1} a^{-1} a' \sigma) \,d\beta^{r(\tau)}(a)
      \,d\alpha^{r(\sigma)} (\dot \tau) \\
    &=\int_{\G}\int_{\A} \delta(\tau)^{\half}f(a\tau)
      \delta(\tau) \delta(\tau^{-1}\sigma)^{\half} g(\tau^{-1} a^{-1}a
      ' \sigma) \,d\beta^{r(\tau)}(a) \,d\alpha^{r(\sigma)}(\dot \tau)
    \\
    &=\int_{\G}\int_{\A} \renj(f)(\tau)(a)
      \auto_{\tau}\bigl(\renj(g)(\tau^{-1}\sigma)\bigr) (a^{-1}a')
      \,\beta^{r(\tau)} (a) \,d\alpha^{r(\sigma)}(\dot\tau) \\
    &=\int_{\G} \renj(f)(\tau)*
      \auto_{\tau}\bigl(\renj(g)(\tau^{-1}\sigma)\bigr)(a')
      \,d\alpha^{r(\sigma)}(\dot\tau) \\
    \intertext{which, arguing as in \cite{wil:crossed}*{Lemma~1.108}, is}
    &= \Bigl(\int_{\G}
      \renj(f)(\tau)*\auto_{\tau}\bigl(\renj(g)(\tau^{-1}\sigma) \bigr)
      \,d\alpha^{r(\sigma)} (\dot\tau)\Bigr)(a').
  \end{align}
  Thus $\renj(f*g)(\sigma)=\renj(f)*\renj(g)(\sigma)$ as required.

  Similarly,
  \begin{align}
    \label{eq:26}
    \renj(f^{*})(\sigma)(a)
    &= \delta(\sigma)^{\half} f^{*}(a\sigma)
      = \delta(\sigma)^{\half}
      \overline{f(\sigma^{-1}a^{-1}\sigma
      \sigma^{-1})} \\
    &= \overline{\auto_{\sigma}\bigl(\renj(f)(\sigma^{-1})\bigr)(a^{-1})}
      = \bigl(\auto_{\sigma}\bigl(\renj(f)(\sigma^{-1}) \bigr)\bigr)^{*}(a).
  \end{align}
  Thus $\renj(f^{*})=\renj(f)^{*}$, and we have shown that $\renj$ is
  a $*$-homomorphism.

  To see that the range is dense, let $F\in \ccgsgtw$.  Let $K$ be a
  compact neighborhood of $p(\supp F)$.  Given $\epsilon>0$ it will
  suffice to find $f\in C_{c}(\Sigma)$ such that
  $p(\supp \renj(f))\subset K$ and $\|\renj(f)-F\|_{\infty}<\epsilon$.

  Given $\sigma\in\Sigma$, there exists $f_{\sigma}\in\cc(\Sigma)$
  such that
  \begin{equation}
    \label{eq:34}
    \|\renj(f_{\sigma})(\sigma)-F(\sigma)\|<\epsilon.
  \end{equation}
  Since $r^{*}\E$ is an upper-semicontinuous Banach bundle and $\gtw$
  is unitary-valued, there is an open neighborhood, $V_{\dot\sigma}$
  of $\dot \sigma$ in $\G$ such that
  \begin{equation}
    \label{eq:35}
    \|\renj(f_{\sigma})(\tau)-F(\tau)\|<\epsilon\quad\text{for all $\dot
      \tau\in
      V_{\dot\sigma}$.}
  \end{equation}
  Fix $\sigma_{1},\dots,\sigma_{n}\in\Sigma$ such that the
  $V_{\dot\sigma_{k}}$ cover $p(\supp F)$.  Let
  $\sset{\phi_{k}} \subset \cc^{+}(\G)$ be such that
  $\supp \phi_{k}\subset V_{\dot\sigma_{k}}\cap K$ and
  \begin{align}
    \label{eq:36}
    \sum_{k=1}^{n}\phi_{k}(\dot \tau)=1
  \end{align}
  if $\tau\in p(\supp F)$ and bounded by $1$ otherwise.  Then
  \begin{equation}
    \label{eq:37}
    f(\sigma) = \sum_{k=1}^{n}\phi_{k}(\dot \sigma)f_{\sigma_{k}}(\sigma)
  \end{equation}
  belongs to $\cc(\Sigma)$, $p(\supp \renj(f) )\subset K$, and
  \begin{align}
    \label{eq:38}
    \|j(f)(\sigma)-F(\sigma)\| \le \sum_{k=1}^{n}\bigl\|\phi_{k}(\dot\sigma)
    \bigl( \renj(f_{\sigma_{k}})(\sigma)-F(\sigma)\bigr) \bigr\| <\epsilon
    \sum_{k=1}^{n} \phi_{k}(\dot\sigma)\le \epsilon.
  \end{align}
  This suffices.
\end{proof}


\begin{thm}
  \label{thm-hm} We let $\Sigma$, $\A$, $\G$, $\E$, and $\auto$ be as
  above with twisting map $\gtw$.  The $*$-homomorphism
  $\renj:\cc(\Sigma)\to \ccgsgtw$ defined by
  $\renj(f)(\sigma)(a)=\delta(\sigma)^{\half}f(a\sigma)$ in
  Lem\-ma~\ref{lem-homo} is isometric for the (universal) \cs-norm and
  therefore extends to an isomorphism
  $\renj : \cs(\Sigma) \to \csgsgtw$.  If $\A$ is amenable, then $j$
  is also isometric for reduced norms and extends to an isomorphism
  $\renj_{r} : \cs_r(\Sigma) \to \csrgsgtw$.
\end{thm}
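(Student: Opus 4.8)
The plan is to prove that $\renj$ is isometric for the universal \cs-norms; granting this, Lemma~\ref{lem-homo} (which shows $\renj$ is a $*$-homomorphism with inductive-limit-dense, hence norm-dense, range) together with the evident injectivity of $\renj$ (if $\renj(f)=0$ then $\delta(\sigma)^{\half}f(a\sigma)=0$ for all $\sigma\in\Sigma$ and all $a\in A(r(\sigma))$, so $f\equiv0$) shows that $\renj$ extends to a \cs-isomorphism $\cs(\Sigma)\to\csgsgtw$. The inequality $\|\renj(f)\|_{\csgsgtw}\le\|f\|_{\cs(\Sigma)}$ is immediate from the universal properties: every representation $\pi$ of $\csgsgtw=\cs(\G;\BB(\E,\auto,\gtw))$ restricts to a representation of $\ccgsgtw$ that is continuous for the inductive-limit topology, and by Remark~\ref{rem-ilt} the map $\renj$ carries inductive-limit convergent nets to inductive-limit convergent nets, so $\pi\circ\renj$ is an inductive-limit continuous $*$-representation of $\cc(\Sigma)$ and hence $\|\pi(\renj(f))\|\le\|f\|_{\cs(\Sigma)}$; taking the supremum over $\pi$ gives the inequality.

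For the reverse inequality I would use Renault's disintegration theorem \cite{muhwil:dm08} to reduce to showing that for each nondegenerate representation $L$ of $\cs(\Sigma)$---taken to be the integrated form of a unitary representation (still denoted $L$) of the groupoid $\Sigma$ on a Borel Hilbert bundle over $(\go,\mu)$ with $\mu$ quasi-invariant---there is a representation $\pi_L$ of $\csgsgtw$ with $\pi_L\circ\renj=L$ on $\cc(\Sigma)$. To build $\pi_L$: restrict $L$ to $\A$, obtaining a field of unitary representations of the groups $A(u)$, and let $\rho$ be the suitably modular-normalized integrated form of $L|_{\A}$ against $\beta$---the unique twist of $h\mapsto\int_{A(u)}h(a)L_a\,d\beta^u(a)$ by a power of $\delta$ that makes $\rho$ a $*$-representation of $\cs(\A)=\Sec_0(\go;\E)$ for which the pair $(\rho,L)$ is a covariant representation of $(\G,\Sigma,\E,\auto,\gtw)$. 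Covariance $L_\sigma\rho(\cdot)L_\sigma^{*}=\rho(\auto_\sigma(\cdot))$ is then checked using the change of variables $a\mapsto\sigma a\sigma^{-1}$ and the modular map of Lemma~\ref{lem-omega}, and the twist condition $\rho(\gtw(a)h)=L_a\rho(h)$ using left-invariance of $\beta^u$ and the definition~\eqref{eq:11} of $\gtw$. The covariant pair $(\rho,L)$ integrates to a representation $\pi_L$ of $\cs(\G;\BB)$, given on $g\in\ccgsgtw$ by $\pi_L(g)=\int_\G\rho(g(\tau))L_\tau\,d\alpha^{r(\tau)}(\dot\tau)$---well defined independently of the representative $\tau$ of $\dot\tau$, precisely by the twist condition. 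Evaluating this on $g=\renj(f)$, unwinding $\rho$, and disintegrating $\lambda^u$ via Lemma~\ref{lem-new-bru-haar-gamma}(c), one checks that the factor $\delta(\sigma)^{\half}$ built into $\renj$ is exactly what is needed for all the modular corrections to cancel, leaving $\pi_L(\renj(f))=\int_\Sigma f(\sigma)L_\sigma\,d\lambda(\sigma)=L(f)$. Hence $\|\renj(f)\|_{\csgsgtw}\ge\sup_L\|L(f)\|=\|f\|_{\cs(\Sigma)}$, so $\renj$ is isometric.

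For the reduced statement, assume $\A$ is amenable, so $\cs(\A)=\cs_r(\A)$. By Section~\ref{sec:Fell-bundles-csa}, $\|\cdot\|_{\csrgsgtw}$ is the norm of the left action of $\ccgsgtw$ on the Hilbert module $\H(\BB)$ over the coefficient algebra $\Sec_0(\go;\BB|_{\go})\cong\cs(\A)$, i.e. of the representation of $\csgsgtw$ induced from $\cs(\A)$; since $\renj(\cc(\Sigma))$ is dense in $\ccgsgtw$, pulling this back along $\renj$ identifies it with the representation of $\cs(\Sigma)$ induced, via the $\cs(\A)$-valued inner product $\langle f,g\rangle=(f^{*}*g)|_{\A}$, from $\cs(\A)$. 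Because $\A$ is amenable, the regular representation of $\cs(\A)$ is faithful, so by induction in stages for the extension $\A\to\Sigma\to\G$ this is weakly equivalent to the regular representation of $\cs(\Sigma)$, which realizes $\|\cdot\|_{\cs_r(\Sigma)}$. Therefore $\renj$ is isometric for the reduced norms as well, and extends to an isomorphism $\renj_r:\cs_r(\Sigma)\to\csrgsgtw$.

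The step I expect to be the main obstacle is the verification that $\pi_L\circ\renj=L$ in the reverse inequality: reconciling the various modular factors---the $\delta^{\half}$ in $\renj$, the modular normalization of $\rho$, the $\gtw$-twist, the groupoid-versus-group convention for the fibres $\cs(A(u))$ (in which the modular function is absent from the involution), and the relation between the modular functions of $\mu$ relative to $\lambda$ and to $\alpha$---requires genuine care, as does making the appeals to disintegration and to induction in stages rigorous in the non-\'etale groupoid setting.
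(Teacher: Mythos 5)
Your proposal is correct and follows essentially the same route as the paper: the forward inequality via inductive-limit continuity, the reverse inequality by disintegrating a representation $L$ of $\cc(\Sigma)$ and assembling from $L\restr\A$ (with the $\delta(a)^{-\half}$ normalization) a representation $\pi_L$ of the twisted crossed product with $\pi_L\circ\renj=L$, and the reduced case by inducing the faithful regular representation of $\cs(\A)=\cs_r(\A)$ through the $\cs(\Sigma)$--$\cs(\A)$ correspondence. The two steps you flag as delicate are precisely where the paper invests its effort: the covariant pair is packaged as a Borel $*$-functor on the Fell bundle $\BB(\E,\auto,\gtw)$ and integrated via the Muhly--Williams machinery (with the modular factor $\Delta=\delta\cdot\Deltab$ from Lemma~\ref{lem-modular} making the bookkeeping work), and the \emph{induction in stages} claim is proved directly as a Hilbert-module isomorphism $X(\Sigma)\otimes_{\cs(\A)}\H(\A)\cong\H(\Sigma)$ intertwining the left actions, via a short support computation.
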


\begin{remark}
  If $\A$ is not amenable, the situation is complicated. One might
  expect to replace $C^*(\A)$ with $C^*_r(\A)$ in the construction of
  $\E$ to obtain a bundle $\E_r$ and an isomorphism
  $C^*_r(\Sigma) \cong C^*_r(\G, \Sigma, \E_r, \kappa_r)$. However, as
  shown in \cites{wil:mjm15, arm:phd19}, while $C^*_r(\A)$ is a
  $C_0(\G^{(0)})$-algebra, its fibre over $u$ need not be isomorphic
  to $C^*_r(A(u))$. We have not pursued this subtlety further.
\end{remark}

  \begin{remark}\label{rem-bm}
    With some effort, the assertion that $\cs(\Sigma)$ and $\csgsgtw$
    are isomorphic can be derived from a general result in a 2016
    preprint due to Buss and Meyer \cite{bm:xx16}.  The map
    $p:\Sigma\to \G$ from \eqref{eq:ext} is an example of what is
    called a ``groupoid fibration with fibre $\A$'' in \cite{bm:xx16}.
    The Fell bundle constructed in \cite{bm:xx16}*{\S6} can be shown
    to be isomorphic (as Fell bundles) to the Fell bundle
    $\BB( \E,\auto,\gtw)=\E^{\Sigma}$ corresponding to $\csgsgtw$
    constructed above.  Then one can show that $j$ is the composition
    of the isomorphism from \cite{bm:xx16}*{Theorem~6.2} and the
    isomorphism induced by the Fell bundle isomorphism above.  Since
    our situation is considerably more concrete, we are able to
    explicitly describe both the bundle and the isomorphism.  This
    will be crucial in the next section where we specialize to the
    case $\A$ is an abelian group bundle.  Furthermore, the case of
    the reduced norm is not considered in \cite{bm:xx16}.
  \end{remark}

  To prove Theorem~\ref{thm-hm} we will need the following technical
  result from \cite{ren:jot91}*{Corollary~1.8}. We have included the
  details for completeness.

\begin{lemma}\label{lem-modular}
  If $\mu$ is a quasi-invariant measure on $\go$ with respect to
  $\Sigma$, then $\mu$ is also quasi-invariant with respect to $\G$.
  If $\Deltab$ is a modular function on $\G$ for $\mu$, then
  $\Delta(\sigma)=\delta(\sigma)\Deltab(\dot \sigma)$ is a modular
  function on $\Sigma$ for $\mu$.  In particular, we can assume both
  $\Deltab$ and $\Delta$ are homomorphisms into $\R^{+}$.
\end{lemma}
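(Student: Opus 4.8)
The plan is to route everything through the disintegration formula~\eqref{eq:2} and the modular identity~\eqref{eq:1}. Set $\nu_\Sigma=\int_\go\lambda^u\,d\mu(u)$ on $\Sigma$ and $\nu_\G=\int_\go\alpha^u\,d\mu(u)$ on $\G$, and recall that $\mu$ is quasi-invariant for $\Sigma$ (resp.\ for $\G$) exactly when $\nu_\Sigma\sim\nu_\Sigma^{-1}$ (resp.\ $\nu_\G\sim\nu_\G^{-1}$), with modular function $d\nu_\Sigma/d\nu_\Sigma^{-1}$ (resp.\ $d\nu_\G/d\nu_\G^{-1}$). Fix a Bruhat section $b\ge 0$ as in Lemma~\ref{lem-new-bru-haar-gamma}(a). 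The one bookkeeping fact that drives the argument is that, by~\eqref{eq:2} and $Q(b)\equiv1$, for any $h\in C_{c,p}(\Sigma)$ with $p(\supp h)$ compact and any $g\in C_c(\G)$,
\[
  \int_\Sigma (g\circ p)\,h\,d\nu_\Sigma=\int_\G g(\dot\sigma)\Bigl(\int_\A h(\sigma a)\,d\beta^{s(\sigma)}(a)\Bigr)\,d\nu_\G(\dot\sigma);
\]
in particular $\int_\G g\,d\nu_\G=\int_\Sigma (g\circ p)\,b\,d\nu_\Sigma$.

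Assume $\mu$ is quasi-invariant for $\Sigma$ and let $\Delta$ be a modular function for $(\Sigma,\mu)$, which we may take to be a homomorphism $\Sigma\to\R^+$. Starting from $\int_\G g\,d\nu_\G^{-1}=\int_\G g(\dot\sigma^{-1})\,d\nu_\G(\dot\sigma)$ and applying the displayed identity twice, with an inversion and the density $\Delta^{-1}$ (i.e.\ $\int_\Sigma F(\sigma^{-1})\,d\nu_\Sigma(\sigma)=\int_\Sigma F\,d\nu_\Sigma^{-1}=\int_\Sigma F\,\Delta^{-1}\,d\nu_\Sigma$) in between, rewrites $\int_\G g\,d\nu_\G^{-1}$ as $\int_\G g(\dot\sigma)\,\rho(\dot\sigma)\,d\nu_\G(\dot\sigma)$ with $\rho(\dot\sigma)=\Delta(\sigma)^{-1}\int_\A b(a^{-1}\sigma^{-1})\Delta(a)^{-1}\,d\beta^{s(\sigma)}(a)\ge 0$. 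Hence $\nu_\G^{-1}=\rho\cdot\nu_\G$, so $\nu_\G^{-1}\ll\nu_\G$; and $\rho(\dot\sigma)>0$ for \emph{every} $\dot\sigma$, since, using~\eqref{eq:1} (conjugation by $\sigma$, together with inversion on the fibre), the set $\{a\in A(s(\sigma)):b(a^{-1}\sigma^{-1})>0\}$ has the same measure class as $\{d\in A(r(\sigma)):b(\sigma^{-1}d)>0\}$, which has positive $\beta^{r(\sigma)}$-mass by the Bruhat normalization~\eqref{eq:29} applied at $\sigma^{-1}$. Thus $\nu_\G\ll\nu_\G^{-1}$ as well, and $\mu$ is quasi-invariant for $\G$.

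For the modular identity, given a modular function $\Deltab$ for $(\G,\mu)$—which we may take to be a homomorphism $\G\to\R^+$—put $\Delta(\sigma):=\delta(\sigma)\Deltab(\dot\sigma)$, a homomorphism $\Sigma\to\R^+$ by Lemma~\ref{lem-omega}. To show it is a modular function for $(\Sigma,\mu)$ it suffices to verify $\int_\Sigma F\,d\nu_\Sigma=\int_\Sigma F(\sigma^{-1})\Delta(\sigma)^{-1}\,d\nu_\Sigma(\sigma)$ for all $F\in C_c(\Sigma)$. Expanding the left side by~\eqref{eq:2} gives $\int_\G Q(F)(\dot\sigma)\,d\nu_\G(\dot\sigma)$, and quasi-invariance for $\G$ turns it into $\int_\G\bigl(\int_\A F(\sigma^{-1}a)\,d\beta^{r(\sigma)}(a)\bigr)\Deltab(\dot\sigma)^{-1}\,d\nu_\G(\dot\sigma)$. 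Expanding the right side by~\eqref{eq:2}, using that $\Delta(a)=\delta(a)=\Delta_{A(u)}(a)$ for $a\in A(u)$ (since $\Deltab\circ p$ is trivial on $\A$, and by Lemma~\ref{lem-omega}) together with the standard group identity $\int_{A(u)}\phi(a^{-1})\Delta_{A(u)}(a)^{-1}\,d\beta^u(a)=\int_{A(u)}\phi(a)\,d\beta^u(a)$ on each fibre, reduces the claim to the single identity $\int_\A F(\sigma^{-1}a)\,d\beta^{r(\sigma)}(a)=\delta(\sigma)^{-1}\int_\A F(a\sigma^{-1})\,d\beta^{s(\sigma)}(a)$, which is exactly~\eqref{eq:1} applied to $f(\cdot)=F(\sigma^{-1}\,\cdot\,)$. (Alternatively, one can simplify the density $\rho$ of the previous paragraph via~\eqref{eq:1} and~\eqref{eq:29} to get $\rho(\dot\sigma)=\delta(\sigma)\Delta(\sigma)^{-1}$ directly, so $\Deltab=\rho^{-1}$ already yields $\Delta=\delta\cdot(\Deltab\circ p)$.) Finally, $\Deltab$ being a homomorphism, so is $\Delta=\delta\cdot(\Deltab\circ p)$, which is the last assertion.

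The step I expect to be the main obstacle is the left/right bookkeeping: the Bruhat section and~\eqref{eq:2} are \emph{left}-normalized, while quasi-invariance unavoidably involves inversions, which interchange the two sides, so each right-hand fibre average that appears (for instance $\int_\A F(a\sigma^{-1})\,d\beta^{s(\sigma)}(a)$ or $\int_\A b(a^{-1}\sigma^{-1})\Delta(a)^{-1}\,d\beta^{s(\sigma)}(a)$) must be converted back to a left-hand one; this conversion is precisely where~\eqref{eq:1}—equivalently, the fact that $\delta$ restricts to $\Delta_{A(u)}$ on each fibre—enters, and getting all the normalizing constants exactly right is the crux. A secondary, purely measure-theoretic point is that modular functions are defined only up to null sets, so the reduction to homomorphism representatives (and the consequent a.e.\ identity $\Delta|_\A=\delta|_\A$) is taken from the standard theory rather than proved here.
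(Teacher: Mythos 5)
Your proof is correct and takes essentially the same route as the paper: quasi-invariance is transferred from $\Sigma$ to $\G$ by disintegrating against a Bruhat section via \eqref{eq:2} and checking that the resulting density is everywhere positive, and the formula $\Delta=\delta\cdot(\Deltab\circ p)$ is then verified by the same computation combining \eqref{eq:2}, the fibrewise inversion identity for $\Delta_{A(u)}=\delta|_{A(u)}$, and \eqref{eq:1}, with Ramsay's theorem supplying the homomorphism representatives (exactly as the paper cites it). The only wrinkle is your parenthetical ``alternative'' simplification of the density $\rho$ to $\delta(\sigma)\Delta(\sigma)^{-1}$, which presupposes $\Delta(a)=\delta(a)$ on $\A$ for an \emph{arbitrary} modular function $\Delta$ of $\Sigma$ before that fact has been established; since your main argument never uses this aside, it does not affect correctness.
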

\begin{proof}
  Let $b\in C_{c,p}^{+}(\Sigma)$ be a Bruhat section as in
  Lemma~\ref{lem-new-bru-haar-gamma}(a).  Suppose $f\in\cc(\G)$.  Then
  \begin{align}
    \nu_{\G}(f)
    &=\int_{\go}\int_{\G}
      f(\lambda)\,d\alpha^{u}(\lambda) \,d\mu(u) \\
    &=\int_{\go}\int_{\G}\int_{\A} f(\dot \sigma )b(\sigma a)\,
      d\beta^{s(\sigma)} (a)\,d\alpha^{u}(\dot\sigma) \,d\mu(u) \\
    &= \int_{\go}\int_{\Sigma} f(\dot\sigma) b(\sigma)
      \,d\lambda^{u}(\sigma) \,d\mu(u).
  \end{align}
  So for any modular function $\Delta$ for $\mu$ on $\Sigma$, we
  obtain
  \begin{align}
    \nu_\G(f)
    &= \int_{\go}\int_{\Sigma} f(\dot\sigma^{-1}) b(\sigma^{-1})
      \Delta(\sigma^{-1}) \,d\lambda^{u}(\sigma)\,d\mu(u) \nonumber\\
    &=\int_{\go}\int_{\G}f(\dot\sigma^{-1})\int_{\A}
      b( a^{-1}\sigma^{-1}) \Delta( a^{-1}\sigma^{-1})
      \,d\beta^{s(\sigma)} (a)\,d\alpha^{u}(\dot\sigma)\,d\mu(u).\label{eq:nugf}
  \end{align}
  Define $B : \G \to \C$ by
  \[
    B(\dot\sigma)
    =\int_{\A}b(a^{-1}\sigma^{-1})\Delta(a^{-1}\sigma^{-1})\,
    d\beta^{s(\sigma)}(a).
  \]
  Then~\eqref{eq:nugf} gives
  \[
    \nu_\G(f) = \int_{\go}\int_{\G} f(\gamma^{-1}) B(\gamma)
    \,d\sigma^{u}(\gamma) \,d\mu(u) = \nu_{\G}^{-1}(f B^{*}).
  \]
  Since $\delta$ agrees with $\Delta_{\A(u)}$, we have
  \begin{align}
    B(\dot\sigma)
    &= \delta(\sigma)^{-1}\int_{\A}b(\sigma^{-1}a^{-1})
      \Delta(\sigma^{-1}a^{-1}) \,d\beta^{r(\sigma)}(a)\\
    &= \int_{\A} b(\sigma^{-1}a) \Delta(\sigma^{-1}a)
      \delta(\sigma^{-1}a^{-1}) \,d\beta^{r(\sigma)}(a).
  \end{align}

  Since $\Delta$ and $\delta$ never vanish, it follows from
  \eqref{eq:29} that $B$ never vanishes. Hence $\nu_{\G}$ and
  $\nu_{\G}^{-1}$ are equivalent.  Thus, by definition, $\mu$ is
  $\G$-quasi-invariant.

  {\allowdisplaybreaks 
    Let $\Deltab$ be a modular function for $\G$ (with respect to
    $\mu$). Then
    \begin{align}
      \label{eq:45}
      \int_{\go}&\int_{\Sigma} f(\sigma^{-1})
                  \Deltab(\dot\sigma^{-1})\delta(\sigma) ^{-1}
                  \,d\lambda^{u}(\sigma) \,d\mu(u) \\
                &=
                  \int_{\go}\int_{\G}\Deltab(\dot\sigma^{-1})
                  \delta(\sigma)^{-1}\int_{\A}
                  f( a^{-1}\sigma^{-1})\delta(a)^{-1}
                  \,d\beta^{s(\sigma)}(a)
                  \,d\alpha^{u}(\dot\sigma) \,d\mu(u) \\
                &= \int_{\go}\int_{\G} Q(f)(\dot\sigma^{-1})
                  \Deltab(\dot\sigma^{-1})
                  \,d\alpha^{u}( \dot\sigma)\,d\mu(u) \\
                &= \int_{\go}\int_{\G} Q(f)(\dot\sigma)
                  \,d\alpha^{u}(\dot\sigma)\,d\mu(u)\\
                &= \int_{\go}\int_{\Sigma}
                  f(\sigma)\,d\lambda^{u}(\sigma) \,d\mu(u).
    \end{align}
    Hence $\Deltab(\dot\sigma)\delta(\sigma)$ is a modular function
    for $\Sigma$.  Work of Ramsay---see
    \cite{wil:toolkit}*{Proposition~7.6}---implies we can take
    $\Deltab$ to be a homomorphism.  Then we can let
    $\Delta(\sigma)=\Deltab(\dot \sigma) \delta(\sigma)$.  This
    completes the proof.} 
\end{proof}

\begin{proof}[Proof of Theorem~\ref{thm-hm}]
  Suppose that $f_{i}\to f$ in the \ilt\ on $C_c(\Sigma)$.  Then as in
  Remark~\ref{rem-ilt}, $\renj(f_{i})\to\renj(f)$ in the \ilt\ on
  $\ccgsgtw$.  Thus if $\Rep$ is a nondegenerate representation of
  $\Sec_{c}(\G;\BB(\E,\auto,\gtw))$, then $\Rep\circ j$ is continuous
  in the \ilt\ on $\cc(\Sigma)$ and therefore bounded with respect to
  the \cs-norm:
  \begin{equation}
    \label{eq:22}
    \|\Rep\bigl(j(f)\bigr) \|\le\|f\| \quad\text{for all
      $f\in\cc(\Sigma)$.}
  \end{equation}
  Since $\Rep$ is arbitrary, $\renj$ extends to a homomorphism
  $\renj:\cs(\Sigma) \to \csgsgtw$.

  Proving that $\renj$ is isometric for universal norms requires
  considerably more work. The idea is straightforward. Given a
  nondengenerate representation $L$ of $\cc(\Sigma)$, it will suffice
  to produce a representation $\Rep$ of
  $\Sec_{c}(\G;\BB(\E,\auto,\gtw))$ such that
  $L(f)=\Rep(\renj(f))$. Applying this to a faithul $L$ will show that
  $j$ is isometric.

  Using Renault's Disintegration Theorem (cf., e.g.,
  \cite{wil:toolkit}*{Theorem~8.2}), we can assume that $L$ is the
  integrated form of a unitary representation $(\mu,\go*\HH,\hat L)$
  where $\mu$ is a quasi-invariant measure on $\go$, $\go*\HH$ is a
  Borel Hilbert bundle over $\go$, and $\hat L$ is a groupoid
  homomorphism of $\Sigma$ into $\Iso(\go*\HH)$ of the form
  $\hat L(\sigma)=\bigl(r(\sigma),L_{\sigma},s(\sigma)\bigr)$. By
  Lemma~\ref{lem-modular}, $\mu$ is quasi-invariant with respect to
  $\G$.  Furthermore, we may assume that $\Delta$ is given by
  $\Delta(\sigma) = \delta(\sigma) \Deltab(\dot\sigma)$ where $\delta$
  is given by Lemma~\ref{lem-omega}, and $\Deltab$ is a modular
  function for $\G$.  We can also assume that $\Delta$ and $\Deltab$
  are homomorphisms.

  We are going to realize $\Rep$ as the integrated form of a Borel
  $*$-functor $\hat\pi:\BB(\E,\auto,\gtw)\to\End(\go*\HH)$ as in
  \cite{muhwil:dm08}*{Definition~4.5 and Proposition~4.10}.  To start,
  consider $\sigma\in\Sigma$ and $h\in\cc(A(r(\sigma)))$.  Define
  $\bpi_{\sigma}(h):\H(s(\sigma))\to \H(r(\sigma))$ by
  \begin{equation}
    \label{eq:24}
    \bpi_{\sigma}(h)\xi=\int_{\A}h(a)L_{a\sigma}(\xi)\delta(a)^{-\half}
    \,d\beta^{r(\sigma)} (a).
  \end{equation}
  It is routine to check that
  \begin{equation}
    \label{eq:27}
    \bpi_{t\sigma}(h)= \bpi_{\sigma}(h\gtw(t))\quad\text{for
      $(t,\sigma)\in\A*\Sigma$.}
  \end{equation}
  Since
  \begin{equation}
    \label{eq:28}
    \bip(\bpi_{\sigma}(h)\xi|\eta)= \int_{\A} h(a)
    \delta(a)^{-\half}\bip(L_{a\sigma}(\xi)|\eta)\,d\beta^{r(\sigma)},
  \end{equation}
  it follows from the usual Cauchy--Schwarz estimate that
  \begin{align}
    \label{eq:30}
    \bigl| \bip(\bpi_{\sigma}(h)\xi|\eta) \bigr|^{2}
    &\le
    \|\xi\|^{2}\|\eta\|^{2} \Bigl( \int_{\A}|h(a)|
      \delta(a)^{-\half}\,d\beta^{r(\sigma)}(a)\Big)^{2} \\
    &\le
    \|\xi\|^{2}\|\eta\|^{2} 
    \|h\|_{I,r}\|h\|_{I,s}
    \le  \|\xi\|^{2}\|\eta\|^{2}\|h\|_{I}^{2}.
  \end{align}
  Therefore $\bpi_{\sigma}$ extends to all of $\cs(A(r(\sigma)))$ and
  still satisfies \eqref{eq:27}.  Hence we get a map
  $\pi:\BB(\E,\auto,\gtw) \to\End(\go*\HH)$ defined by
  \begin{align}
    \label{eq:31}
    \pi([\sigma,s])=\bpi_{\sigma}(s).
  \end{align}
  It is not hard to check that $\pi$ determines a $*$-functor
  $\hat \pi(b)=(r(b),\pi(b),s(b))$.  To see that $\hat\pi$ is Borel we
  need to show that
  \begin{equation}
    \label{eq:13}
    \dot\sigma\mapsto \bip(\pi(\check
    f(\dot\sigma))(h(s(\sigma)))|{k(r(\sigma))})
  \end{equation}
  is Borel for all $\check f\in \Sec(\G;\BB(\E,\auto,\gtw)))$ and all
  Borel sections $h$ and $k$ of $\go*\HH$.  By Lemma~\ref{lem-homo},
  we can assume that $\check f$ is defined by $f\in \ccgsgtw$ in the
  range of $\renj$.  Thus $f(\sigma)\in \cc(A(r(\sigma)))$ for all
  $\sigma\in\Sigma$.  Then
  \begin{align}
    \label{eq:14}
    \bip(\pi(\check
    f(\dot\sigma))(h(s(\sigma)))|{k(r(\sigma))})
    &= \bip(\pi_{\sigma}(f(\sigma))(h(s(\sigma)))| {k(r(\sigma))}) \\
    &= \int_{\A} f(\sigma)(a) \bip(L_{a\sigma}(h(s(\sigma)))|
      {k(r(\sigma))}) \delta(a)^{-\half} \,d\beta^{r(\sigma)}(a),
  \end{align}
  which is Borel since $L$ is. Let $\Rep$ be the integrated form of
  $\hat \pi$.

  Take $f\in\cc(\Sigma)$ and $\xi\in L^{2}(\go*\HH,\mu)$.  Then
  \begin{align}
    \label{eq:33}
    L(f)\xi(u)
    &= \int_{\Sigma} f(\sigma) L_{\sigma}\bigl(\xi(s(\sigma))\bigr)
      \Delta(\sigma)^{ -\half}\,d\lambda^{u}(\sigma) \\
    &= \int_{\G}\int_{\A} f(\sigma a) L_{\sigma a}
      \bigl(\xi(s(\sigma))\bigr) \Delta(\sigma a)^{-\half}
      \,d\beta^{s(\sigma)}(a) \,d\alpha^{u}(\dot \sigma) \\
    &= \int_{\G}\delta(\sigma)\int_{\A} f(a\sigma)
      L_{a\sigma}\bigl(\xi(s (\sigma))\bigr) \Delta(a\sigma)^{-\half}
      \,d\beta^{u}(a) \,d\alpha^{u}(\dot \sigma) \\
    &= \int_{\G}\delta(\sigma)^{\half} \int_{\A}
      \renj(f)(\sigma)(a) L_{a\sigma}\bigl( \xi(s(\sigma))\bigr)
      \Deltab(\dot \sigma)^{-\half} \delta(a\sigma)^{-\half}
      \,d\beta^{u}(a) \,d\alpha^{u}(\dot \sigma) \\
    &=\int_{\G} \pi(\renj(f(\gamma)))(h(s(\gamma)))
      \Deltab(\gamma)^{-\half}(\gamma) \\
    &=\Rep(\renj(f))\xi(u).
  \end{align}
  This shows that $\renj$ is isometric.

  The surjectivity of $\renj$ follows from Lemma~\ref{lem-homo}.  This
  finishes the proof that $\renj$ extends to an isomorphism
  $\renj : \cs(\Sigma) \to \csgsgtw$.

  It remains to prove that $j$ is isometric for reduced norms when
  $\A$ is amenable. First, note that by
  \cite{hol:jot17}*{Example~3.14} the space $C_c(\Sigma)$ completes to
  a $C^*(\Sigma)$--$C^*(\A)$ correspondence $X(\Sigma)$ with actions
  given by convolution and inner product
  $\langle f, g\rangle_{C^*(\A)} = (f^* * g)|_{\A}$. Let $\BB$ denote
  the Fell bundle $\BB(\E, \vartheta, \kappa)$ described above so that
  $\csgsgtw$ is the $C^*$-algebra $C^*(\G, \BB)$ of this bundle, and
  similarly for reduced $C^*$-algebras. Let $\H(\BB)$ be the
  right-Hilbert $C^*(\A)$-module of
  Section~\ref{sec:Fell-bundles-csa}, so that the left action of
  $C_c(\G; \BB)$ on $\H(\BB)$ determined by multiplication in
  $C_c(\G; \BB)$ is isometric for the reduced norm. By construction of
  these maps, the map $\renj : C_c(\Sigma) \to \csgsgtw$ extends to a
  right-$C^*(\A)$-module isomorphism $\rho : X(\Sigma) \to \H(\BB)$,
  which satisfies
  \begin{equation}\label{eq:left actions}
    \rho(f \cdot \xi) = j(f) \cdot \rho(\xi)\quad\text{ for $f \in
      C_c(\Sigma)$ and $ \xi \in X(\Sigma)$.}
  \end{equation}

  Let $\H(\Sigma)$ and $\H(\A)$ be the Hilbert modules described in
  Section~\ref{sec:Fell-bundles-csa} that carry faithful
  representations of $C^*_r(\Sigma)$ and $C^*_r(\A)$ respectively. We
  can form the modules $X(\Sigma) \otimes_{C^*(\A)} \H(\A)$ and
  $\H(\BB) \otimes_{C^*(\A)} \H(\A)$, and the isomorphism
  $\rho : X(\Sigma) \to \H(\BB)$ defined above determines an
  isomorphism
  $\rho \otimes \id : X(\Sigma) \otimes_{C^*(\A)} \H(\A) \to \H(\BB)
  \otimes_{C^*(\A)} \H(\A)$ which again intertwines the left actions
  similarly to~\eqref{eq:left actions}. Since the bundle $\A$ is an
  amenable groupoid, the left action of $C^*(\A)=\cs_{r}(\A)$ on
  $\H(\A)$ is faithful, and it follows that the map
  $T \mapsto T \otimes 1$ from $\Ll(\H(\BB))$ to
  $\Ll(\H(\BB) \otimes_{C^*(\A)} \H(A))$ is isometric. So for
  $f \in C_c(\Sigma)$, we have
  \[
    \|\rho(f) \otimes 1\|_{\Ll(\H(\BB) \otimes_{C^*(\A)} \H(A))} =
    \|\rho(f)\|_{\csrgsgtw}.
  \]
  We will show that the map $f \otimes a \mapsto f \cdot a$ extends to
  an isomorphism $X(\Sigma) \otimes_{C^*(\A)} \H(A)$ to
  $\H(\Sigma)$. This will complete the proof since then
  \[
    \|f\|_{C^*_r(\Sigma)} = \|f \otimes 1\|_{\Ll(X(\Sigma)
      \otimes_{C^*(\A)} \H(A))} = \|\rho(f) \otimes 1\|_{\Ll(\H(\BB)
      \otimes_{C^*(\A)} \H(A))} = \|\rho(f)\|_{\csrgsgtw}
  \]
  for all $f \in C_c(\Sigma)$.

  To see that $f \otimes a \mapsto f \cdot a$ extends to the desired
  isomorphism, we fix $f,g \in C_c(\Sigma)$ and $a,b \in C_c(\A)$, and
  calculate:
  \begin{align*}
    \langle f \otimes a, g \otimes b\rangle_{C_0(\Sigma^{(0)})}
    &= \big(a^* * (f^* * g)|_{\A} *
      b\big)|_{C_0(\Sigma^{(0)})}\quad\text{ and}\\
    \langle f \cdot a, g\cdot b\rangle_{C_0(\Sigma^{(0)})}
    &= \big(a^* * (f^* * g) * b\big)|_{C_0(\Sigma^{(0)})}.
  \end{align*}
  For any $h \in C_c(\Sigma)$, and any $x \in \Sigma^{(0)}$, we have
  \[
    (a^* * h * b)(x) = \int_{\Sigma} \int_{\Sigma} a^*(\beta)
    h(\gamma)
    b((\beta\gamma)^{-1}x)\,d\lambda^{x}(\beta)\,d\lambda^{s(\beta)}(\gamma).
  \]
  Since $a,b \in C_c(\A)$, the integrand is nonzero only when
  $\beta, \beta^{-1}\gamma \in \A_x$, and this forces
  $\gamma \in \A_x$. So
  \[
    \big(a^* * (f^* * g) * b\big)|_{\Sigma^{(0)}} = \big(a^* * (f^* *
    g)|_{\A} * b\big)|_{\Sigma^{(0)}}.
  \]
  This completes the proof of Theorem~\ref{thm-hm}.
\end{proof}

\section{The abelian case}
\label{sec:abelian-case}

In this section, we specialize to the case where $\A$ is a normal
\emph{abelian} subgroup bundle of $\Sigma$ with a Haar system $\beta$
as above.  Then $\cs(\A)$ is a commutative \cs-algebra.  Let $\hA$ be
the Gelfand dual space of nonzero complex homomorphisms
$\chi:\cs(\A)\to \C$, then the Gelfand transform $\F$ is an isomorphism of
$\cs(\A)$ onto $C_{0}(\hA)$.  As usual, we write $\hat f= \F(f)$ for
all $f\in\cs(\A)$.  As shown in \cite{mrw:tams96}*{Corollary~3.4}, the
Gelfand dual $\hA$ is an abelian group bundle $\hp:\hA\to \go$ with
fibres $\hat A(u):=(A(u))^{\wedge}$.  If $\chi\in \hA$, the
corresponding complex homomorphism on $\cc(\A)$ is given by
\begin{equation}
  \label{eq:65}
  \chi(f) = \int_{A(\hp(\chi))} f(a)\overline{\chi(a)}
  \,d\beta^{\hp(\chi)}(a)=\hat f(\chi).
\end{equation}
(The complex conjugate appearing on the right-hand side of
\eqref{eq:65} is included to match up with our prejudice for the form
of the Fourier transform.)

Since $\A$ is abelian in this section, $\delta(\sigma)$ depends only
on $\dot\sigma$ and each $\beta^{u}$ is bi-invariant.  The right
action of $\Sigma$ on $\hA$ given by
\begin{equation}
  \label{eq:39}
  \chi\cdot \sigma(a):= \chi(\sigma a\sigma^{-1})
\end{equation}
factors through a right action of $\G$ on $\hA$.  So we may form the
transformation groupoid $\hA\rtimes \G$ for the action of $\G$ on the
space $\hA$.  Recall that we identify the unit space $\hA\rtimes\G$
with $\hA$.  Thus $r(\chi,\gamma)=\chi$,
$s(\chi,\sigma)=\chi\cdot \sigma$, and
$(\chi,\gamma)(\chi\cdot\gamma,\eta)=(\chi,\gamma\eta)$.  We can equip
$\hA\rtimes \G$ with the Haar system
$\halpha=\sset{\halpha^{\chi}}_{\chi\in\hA}=\sset{\delta_{\chi}\times
  \alpha^{\hp(\chi)}}$.

\subsection{The associated $\T$-groupoid}
\label{sec:assoc-t-group}

We want to build a $\T$-groupoid associated to $\Sigma$ just as in
\cite{mrw:tams96}*{\S4} except that there $\G$ was assumed to be
principal. We start by defining
\begin{equation}
  \label{eq:4aa}
  \D=\set{(\chi,z,\sigma)\in\hA\times\T\times\Sigma: \hp(\chi)=r(\sigma)}.
\end{equation}
We can make $\D$ into a locally compact Hausdorff groupoid by
identifying it with $\hA\rtimes\Sigma\times \T$.  Thus,
\begin{equation}
  \label{eq:5a}
  (\chi,z_{1},\sigma_{1})(\chi\cdot \sigma_{1},z_{2},\sigma_2) =
  (\chi,z_{1}z_{2},\sigma_{1}\sigma_{2}) \quad\text{and} \quad
  (\chi,z,\sigma)^{-1}= (\chi\cdot\sigma, \bar z, \sigma^{-1}).
\end{equation}
We can identify $\D^{(0)}$ with $\hA$, and then
\begin{equation}
  \label{eq:7a}
  r(\chi,z,\sigma)=\chi\quad\text{and} \quad
  s(\chi,z,\sigma) = \chi\cdot\sigma.
\end{equation}

Let $H$ be the subgroupoid of $\D$ consisting of triples of the form
$(\chi,\overline{\chi(a)},a)$ for $a\in A(\hp(\chi))$.  Note that if
$(\chi_{n},\overline{\chi_{n}(a_{n})} ,a_{n})\to (\chi,z,\sigma)$ in
$\D$, then $a_{n}\to \sigma$ and $\sigma=a\in A(\hp(\chi))$ since $\A$
is closed in $\Sigma$.  Then $\chi_{n}(a_{n})\to \chi(a)$ by
\cite{mrw:tams96}*{Proposition~3.3}.  Hence $H$ is closed in $\D$ with
$H^{(0)}=\D^{(0)}$.  To see that $H$ has an open range map, we use
Fell's Criterion (Lemma~\ref{lem-fell-criterion}).

So suppose that $\chi_{n}\to \chi=r(\chi,\overline{\chi(a)},a)$ in
$\hA\times\go$. Since $p:\A\to \go$ is open, we can pass to a subnet,
relabel, and assume that there are $a_{n}\to a$ in $\A$ such that
$p(a_{n})=\hp(\chi_{n})$.  Then
$(\chi_{n},\overline{\chi_{n}(a_{n})},a_{n}) \to
(\chi,\overline{\chi(a)},a)$ in $\D$.

We have now showed that $H$ has open range and source maps. Hence the
quotient map $q:\D\to \D/H$ is open. Furthermore, if $d\in \D$, then
it is not hard to see that $d H=Hd$. Thus, as in
Lemma~\ref{lem-normal-quotient}, we can form the locally compact
Hausdorff groupoid $\Sigmaw:=\D/H$, and the elements of $\Sigmaw$ are
given by triples $[\chi,z,\sigma]$ where
$[\chi,z,a\sigma]=[\chi,\chi(a) z ,\sigma]$ for all $a\in\A$. Thus we
can define maps $i:(\hA\times\T)\to\Sigmaw$ and
$j:\Sigmaw\to \hA\rtimes\G$ by
\begin{equation}
  \label{eq:11a}
  i(\chi,z)=[\chi,z,\hp(\chi)] \quad\text{and} \quad
  j([\chi,z,\sigma]) = (\chi,\dot\sigma).
\end{equation}

\begin{prop}
  \label{prop-t-groupoid} With respect to the maps $i$ and $j$ above,
  $\Sigmaw$ is a $\T$-groupoid over $\hA\rtimes\G$:
  \begin{equation}
    \label{eq:13a}
    \begin{tikzcd}[column sep=3cm]
      \hA\times\T \arrow[r,"i"] \arrow[dr,shift left, bend right = 15]
      \arrow[dr,shift right, bend right = 15]&\Sigmaw \arrow[r,"j",
      two heads] \arrow[d,shift left] \arrow[d,shift
      right]&\hA\rtimes\G \arrow[dl,shift left, bend left = 15]
      \arrow[dl,shift right, bend left = 15]
      \\
      &\hA&
    \end{tikzcd}
  \end{equation}
\end{prop}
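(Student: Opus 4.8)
The plan is to verify, condition by condition, that the diagram \eqref{eq:13a} exhibits $\Sigmaw$ as a central unit-space fixing extension of $\hA\rtimes\G$ by the trivial bundle $\hA\times\T$, relying on the fact that $\Sigmaw=\D/H$ is already known to be a locally compact Hausdorff groupoid with unit space $\hA$ (obtained exactly as in Lemma~\ref{lem-normal-quotient}). First I would check that the maps in \eqref{eq:11a} are well defined and continuous: the relation $[\chi,z,a\sigma]=[\chi,\chi(a)z,\sigma]$ shows immediately that $j([\chi,z,\sigma])=(\chi,\dot\sigma)$ does not depend on the chosen representative, while $i(\chi,z)=[\chi,z,\hp(\chi)]$ is the composite of the continuous inclusion $(\chi,z)\mapsto(\chi,z,\hp(\chi))$ of $\hA\times\T$ into $\D$ with the quotient map $q$; continuity of $j$ follows because $j\circ q\colon(\chi,z,\sigma)\mapsto(\chi,\dot\sigma)$ is continuous and $q$ is an open surjection. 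That $i$ and $j$ are groupoid homomorphisms fixing $\hA$ is then a one-line computation from \eqref{eq:5a}, the displayed relation, and the multiplication on $\hA\rtimes\G$.

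Next I would show that $j$ is an open surjection with kernel $i(\hA\times\T)$. Surjectivity is immediate, since $[\chi,1,\sigma]\mapsto(\chi,\dot\sigma)$; openness follows from the openness of $p\colon\Sigma\to\G$ (so the induced map $\hA*\Sigma\to\hA*\G$ of fibre products is open) together with openness of the projection off the $\T$-coordinate, giving $j\circ q$ open and hence, $q$ being surjective, $j$ open. For the kernel, $j([\chi,z,\sigma])$ is a unit of $\hA\rtimes\G$ precisely when $\dot\sigma$ is a unit of $\G$, i.e.\ $\sigma=a\in A(\hp(\chi))$, in which case $[\chi,z,a]=[\chi,\chi(a)z,\hp(\chi)]=i(\chi,\chi(a)z)$; thus $j^{-1}(\hA)=i(\hA\times\T)$, a closed subgroupoid because $\hA$ is closed in the Hausdorff groupoid $\hA\rtimes\G$.

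I then need $i$ to be a homeomorphism onto this image (in particular injective), and this is the step I expect to require the most care. The clean way to handle it is to exhibit an explicit continuous inverse: the map $(\chi,z,a)\mapsto(\chi,\chi(a)z)$, defined on $\set{(\chi,z,a)\in\D:a\in\A}$, is continuous since $(\chi,a)\mapsto\chi(a)$ is continuous by \cite{mrw:tams96}*{Proposition~3.3} (the same fact already used to see that $H$ is closed), and it is $H$-invariant because $\chi$ is a character of $A(\hp(\chi))$; hence it descends to a continuous map $j^{-1}(\hA)\to\hA\times\T$ that is a two-sided inverse to $i$. It remains to record centrality of $i(\hA\times\T)$, which is essentially automatic: for $[\chi,z,\sigma]\in\Sigmaw$ one has $r([\chi,z,\sigma])=\chi$, $s([\chi,z,\sigma])=\chi\cdot\sigma$, and \eqref{eq:5a} gives
\[
  i(\chi,t)\,[\chi,z,\sigma]=[\chi,tz,\sigma]=[\chi,zt,\sigma]=[\chi,z,\sigma]\,i(\chi\cdot\sigma,t),
\]
since $\T$ is abelian and units of $\Sigma$ act as identities. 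Assembling these facts shows that \eqref{eq:13a} is a $\T$-groupoid over $\hA\rtimes\G$. Apart from the homeomorphism claim for $i$, every step is either a one-line computation or a direct consequence of the openness of $p$ and $q$ already in hand.
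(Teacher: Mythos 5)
Your proposal is correct, and it checks the same list of conditions as the paper (exactness at $\Sigmaw$, openness of $j$, and that $i$ is a homeomorphism onto its range), but two of the nontrivial verifications are carried out by different means. For the step the paper treats most carefully --- that $i$ is injective with closed range and a homeomorphism onto that range --- the paper argues by nets: given $i(\chi_n,z_n)\to[\chi,z,\sigma]$, it uses openness of the quotient map $q:\D\to\Sigmaw$ to lift to a convergent net $(\chi_n,\overline{\chi_n(a_n)}z_n,a_n)\to(\chi,z,\sigma)$ in $\D$, concluding $\sigma\in\A$ and $z_n\to z$. You instead exhibit the explicit continuous inverse $[\chi,z,a]\mapsto(\chi,\chi(a)z)$, defined by descending the $H$-invariant map $(\chi,z,a)\mapsto(\chi,\chi(a)z)$ through the (restricted) open quotient $q$; this is a cleaner, more constructive argument that delivers injectivity, the identification $i(\hA\times\T)=j^{-1}(\hA)$, and the homeomorphism claim in one stroke, at the cost of invoking continuity of $(\chi,a)\mapsto\chi(a)$ from \cite{mrw:tams96}*{Proposition~3.3} a second time (legitimate, since the paper already uses it to see that $H$ is closed), and of the small observation that an open quotient restricts to an open quotient on the saturated set $q^{-1}(j^{-1}(\hA))$. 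For openness of $j$, the paper applies Fell's criterion directly using openness of $p:\Sigma\to\G$, whereas you factor $j\circ q$ as the projection off the $\T$-coordinate followed by the induced map $\hA*\Sigma\to\hA*\G$; the latter is indeed open, but it is worth saying explicitly that this uses the unit-space-fixing property (so that $r(\sigma)=\hp(\chi)$ is automatic once $r(\dot\sigma)=\hp(\chi)$), which is the same observation that makes the paper's Fell-criterion lift work. Finally, you record the centrality of $i(\hA\times\T)$, which the paper leaves implicit; including it does no harm and completes the verification that \eqref{eq:13a} is a $\T$-groupoid.
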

\begin{proof}
  The map $i$ is clearly injective.  Suppose that
  $i(\chi_{n},z_{n})= [\chi_{n},z_{n},\hp(\chi_{n})]\to
  [\chi,z,\sigma]$.  Since $q:\D\to\Sigmaw$ is open, we may assume
  that there exist $a_{n}\to a$ in $\A$ such that
  $(\chi_{n},\overline{\chi_{n}(a_{n})}z_{n},a_{n}) \to
  (\chi,z,\sigma)$.  But then $\sigma=a$ and $z_{n}\to \chi(a)z$.
  Thus
  $[\chi,z,\sigma]=[\chi,z,a]=[\chi,\chi(a)z,\hp(\chi)]\in
  i(\hA\times\T)$ and $i$ has closed range.  Replacing $\sigma$ by
  $\hp(\chi)$ in the above shows that $z_{n}\to z$ and it follows that
  $i$ is a homeomorphism onto its range as required.

  If $j([\chi,z,\sigma])$ is a unit, then $\dot\sigma=\hp(\chi)$ and
  $\sigma=a\in A(\hp(\chi))$.  But then
  \begin{equation}
    \label{eq:12a}
    [\chi,z,a]=[\chi,\chi(a)z,\hp(\chi)]\in i(\hA\times\T).
  \end{equation}
  In other words, \eqref{eq:13a} is ``exact'' at $\Sigmaw$.

  We still need to check that $j$ is open.  Again, we employ Fell's
  criterion (see Lemma~\ref{lem-fell-criterion}).  Suppose that
  $(\chi_{n},\gamma_{n})\to (\chi,\dot\sigma)= [\chi,z,\sigma]$ in
  $\hA\rtimes \G$.  Since $p$ is open, we can pass to a subnet,
  relabel, and assume that there are $\sigma_{n}\to \sigma$ such that
  $p(\sigma_{n})=\gamma_{n}$.  But then
  $(\chi_{n},z,\sigma_{n})\to (\chi,z,\sigma)$.  This suffices.
\end{proof}

Given our Haar system on $\hA\rtimes\G$, we can build the restricted
\cs-algebra $\cshacgsw$.  Recall that this \cs-algebra is built from
functions $\tilde F\in \cc(\Sigmaw)$ such that
\begin{equation}
  \label{eq:22a}
  z'\tilde F([\chi,z,\sigma])=\tilde
  F([\chi,z'z,\sigma])\quad\text{for all $z'\in\T$.}
\end{equation}
Also recall that we write $\cc(\hacg;\Sigmaw)$ for the space of all
such functions. To make the notation easier to work with, notice that
any $\tilde F \in \cc(\hacg;\Sigmaw)$ is determined by its values on
classes of the form $[\chi,1,\sigma]$. Hence we identify
$\cc(\hacg;\Sigmaw)$ with the collection $\ccsas$ of continuous
functions $F$ on $\hacs$ such that
\begin{equation}
  \label{eq:23a}
  F(\chi,a\sigma) =\chi(a)F(\chi,\sigma) \quad\text{for all $a\in
    A(r(\sigma))$}
\end{equation}
and such that the support of $F$ has compact image in $\hacg$.

If $F,G \in\ccsas$, then
\begin{equation}
  \label{eq:18aa}
  F*G(\chi,\sigma)=\int_{\G} F(\chi,\tau) G(\chi\cdot
  \tau, \tau^{-1}\sigma) \,d\alpha^{r(\sigma)}(p(\tau))
\end{equation}
and
\begin{equation}
  \label{eq:19aa}
  F^{*}(\chi,\sigma) = \overline{F(\chi\cdot\sigma,\sigma^{-1})}.
\end{equation}

\subsection{The isomorphism}
\label{sec:isomorphism}

The Gelfand transform gives us an isomorphism of
$\cs(\A)=\Sec_{0}(\go;\E)$ onto $C_{0}(\hA)=\Sec_{0}(\go;\hat{\E})$
where $\hat\E=\coprod C_{0}(\hat A(u))$ is the bundle described just
before~\eqref{eq:65}.

Our constructions in the previous section give us a dynamical system
$(\hat\E,\Sigma,\hauto)$ where
$\hauto_{\sigma}:C_{0}(\hat A(s(\sigma)))\to C_{0}(\hat A(r(\sigma)))$
is given by
\begin{align}
  \label{eq:41}
  \hauto_{\sigma}(\hat h)(\chi)=\hat h(\chi\cdot \sigma).
\end{align}
The corresponding left-action of $\A$ on $r^{*}\hat\E$ is determined
on $\hat h\in C_{0}(\hat A(u))$ by
\begin{align}
  \label{eq:42}
  \hgtw(t)\hat h(\chi)=\overline{\chi(t)}\hat h(\chi).
\end{align}
We form the Fell bundle
$\hat\BB(\hat\E,\hat\auto,\hat\gtw) =\A\backslash r^{*}\hat\E$ for the
twist $\hat\gtw$ on $(\hat\E,\Sigma,\hat\auto)$. Sections
$\check g\in \Sec_{c}(\G;\hat\BB(\hat\E,\hat\auto,\hat\gtw))$ are
determined by $g\in\ccgshgtw$ where $g:\Sigma\to \hat\E$ is
continuous, and satisfies
\begin{align}
  \label{eq:43}
  g(t\sigma)(\chi)=\chi(t)g(\sigma)(\chi)
\end{align}
for all $(t,\sigma)\in\A*\Sigma$, and has support with compact image
in $\G$.

From Theorem~\ref{thm-hm}, we get isomorphisms
$\cs(\Sigma) \to \cs(\G;\hat\BB(\hat\E,\hat\auto,\hat\gtw))$ and
$\cs_r(\Sigma) \to \cs_r(\G;\hat\BB(\hat\E,\hat\auto,\hat\gtw))$ that
send $f\in \cc(\Sigma)$ to the section $\hat\renj(f)$ given by
$\hat\renj(\sigma) = {(\renj(f)(\sigma))\widehat{\;}}$. Hence
\begin{align}
  \label{eq:44aa}
  \hat\renj(f)(\sigma)(\chi)=\delta(\sigma)^{\half}\int_{\A}
  f(a\sigma)\overline{\chi(a)} \,d\beta^{r(\sigma)}(a).
\end{align}

Let $\cisas$ be the collection of all $f\in C_{0}(\hA*_{r}\Sigma)$
such that there is a compact set $K\subset \G$ such that
$f(\chi,\sigma)=0$ if $\dot\sigma\notin K$.  Since
$r^{*}\bigl(C_{0}(\hA)\bigr)\cong C_{0}(\Sigma*_{r}\hA)$, we obtain a
one-to-one correspondence between $\cisas$ and $\ccgshgtw$ that
carries $f\in\cisas$ to the element $F_{f}\in\ccgshgtw$ given by
\begin{equation}
  \label{eq:45aa}
  F_{f}(\sigma)(\chi)=f(\chi,\sigma).
\end{equation}

\begin{prop}
  \label{prop-easy-hm} The map $f\mapsto F_{f}$ is a $*$-isomorphism
  of $\ccsas$ into $\ccgshgtw$ which extends to isomorphisms
  \[
    \cshacgsw \cong \csgshgtw \quad\text{ and }\quad \csrhacgsw \cong
    \csrgshgtw.
  \]
\end{prop}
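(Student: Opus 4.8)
The plan is to verify directly that $f \mapsto F_f$ is an injective $*$-homomorphism $\ccsas \to \ccgshgtw$ with inductive-limit-dense range that is a homeomorphism onto its range for the inductive-limit topologies, and then to promote this to the two claimed $C^*$-algebra isomorphisms --- the full one via the universal property of Fell-bundle $C^*$-algebras and the reduced one via the defining Hilbert modules of Section~\ref{sec:Fell-bundles-csa}. That $F_f$ lies in $\ccgshgtw$ and that $f \mapsto F_f$ is injective are essentially built into the identification $r^*(C_0(\hA)) \cong C_0(\Sigma *_r \hA)$ underlying \eqref{eq:45aa}: the transformation law \eqref{eq:23a} for $f$ becomes precisely the section identity \eqref{eq:43} for $F_f$, since by \eqref{eq:42} the operator $\hgtw(t)^*$ acts on $C_0(\hat A(u))$ as multiplication by $\chi \mapsto \chi(t)$, and the support of $F_f$ has compact image in $\G$ because that of $f$ has compact image in $\hacg$. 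That the map is a $*$-homomorphism is a fibrewise computation: evaluating the convolution formula \eqref{eq:68a} for the Fell bundle $\hat\BB(\hat\E,\hauto,\hgtw)$ at a character $\chi$ and applying $\hauto_\tau(\hat h)(\chi) = \hat h(\chi\cdot\tau)$ from \eqref{eq:41} turns the integrand into $f(\chi,\tau)\,g(\chi\cdot\tau,\tau^{-1}\sigma)$, which is exactly the integrand in \eqref{eq:18aa}; the same substitution matches the involution of $\hat\BB$ with \eqref{eq:19aa}.

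Next I would treat the topology. Both inductive-limit topologies amount to uniform convergence together with a common bound on the image of the supports, and $f$ and $F_f$ carry the same fibrewise supremum norm and the same image of support in $\G$, so $f \mapsto F_f$ is bicontinuous onto its range. For density of the range in $\ccgshgtw$ I would truncate in the $\hA$-direction: given $F \in \ccgshgtw$, let $g \in C_0(\Sigma *_r \hA)$ be the function with $g(\chi,\sigma) = F(\sigma)(\chi)$, choose $\psi_n \in \cc(\hA)$ increasing to $1$ with $\psi_n \equiv 1$ on the ($\hA$-projection of the) relatively compact set $\{|g| \ge 1/n\}$, and set $f_n(\chi,\sigma) := \psi_n(\chi) g(\chi,\sigma)$; then each $f_n$ lies in $\ccsas$ and $F_{f_n} \to F$ in the inductive-limit topology.

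For the full $C^*$-algebras, recall that the full norm of a Fell-bundle $C^*$-algebra is the supremum over $*$-representations of the section algebra that are continuous in the inductive-limit topology. Composition with $f \mapsto F_f$ carries every such representation of $\ccgshgtw$ to an inductive-limit-continuous representation of $\ccsas = \cc(\hacg;\Sigmaw)$, giving $\|F_f\| \le \|f\|$ for the universal norms; conversely, by the density established above together with the fact (a consequence of Renault's disintegration theorem, which automatically bounds inductive-limit-continuous representations by the $I$-norm) that an inductive-limit-continuous $*$-representation of an inductive-limit-dense $*$-subalgebra of $\ccgshgtw$ extends to all of $\ccgshgtw$, every inductive-limit-continuous representation of $\ccsas$ arises by composition, so the universal norms agree and $f \mapsto F_f$ extends to $\cshacgsw \cong \csgshgtw$. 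For the reduced $C^*$-algebras I would use the Hilbert modules: both $\H(\hat\BB)$ and the Hilbert $C_0(\hA)$-module realizing the reduced norm of $\cs(\hacg;\Sigmaw)$ are completions of $\ccgshgtw$, respectively $\ccsas$, for the $C_0(\hA)$-valued inner product $\langle h,k\rangle = (h^* * k)$ restricted to the unit space (using $\Sec_0(\go;\hat\E) \cong C_0(\hA)$), and since $f \mapsto F_f$ is a $*$-homomorphism it intertwines these inner products; because $\langle h,k\rangle$ depends inductive-limit-continuously on $(h,k)$, the inductive-limit density upgrades to density in the module norm, so $f \mapsto F_f$ induces a unitary of right-$C_0(\hA)$-modules intertwining the left regular representations, whence $\csrhacgsw \cong \csrgshgtw$.

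The step I expect to demand the most care is the passage to the full $C^*$-algebras: one must check that every inductive-limit-continuous representation of $\ccsas$ genuinely extends, through the inductive-limit-dense but proper image of $\ccsas$ inside $\ccgshgtw$, to a representation of the larger algebra $\ccgshgtw$ of sections over $\G$. This is exactly where the mismatch between the two support conditions --- ``compact image in $\hacg$'' for $\ccsas$ versus ``compact image in $\G$'' for $\ccgshgtw$ --- has to be handled, and it is the reason the truncation argument of the second paragraph, rather than a bare bijectivity statement, is needed.
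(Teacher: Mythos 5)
Your fibrewise verification that $f\mapsto F_f$ is an injective $*$-homomorphism, your truncation argument for inductive-limit density of the image (which also gives $I$-norm density, since inductive-limit convergence with supports of fixed compact image in $\G$ forces $I$-norm convergence), and your reduced-norm argument via the two Hilbert $C_0(\hA)$-modules all track the paper's proof. The genuine problem is in the passage to the full $C^*$-algebras, specifically the claim that $f\mapsto F_f$ is bicontinuous onto its range for the inductive-limit topologies. That is false: convergence in the inductive-limit topology of $\ccgshgtw$ controls uniform convergence and the image of the supports in $\G$ only, so the supports of the corresponding functions in $\ccsas$ may escape to infinity in the $\hA$-direction. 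Already for $\Sigma=\A=\R$ and $\G$ trivial one has $\ccsas\cong C_c(\hat{\R})$ sitting inside $\ccgshgtw\cong C_0(\hat{\R})$, where the subspace topology is just uniform convergence and is strictly coarser than the inductive-limit topology of $C_c(\hat{\R})$. Consequently, given an inductive-limit-continuous representation $\pi$ of $\ccsas$, you have not shown that the induced representation of the dense subalgebra $\set{F_f:f\in\ccsas}$ is continuous for the inductive-limit topology of $\ccgshgtw$, which is the hypothesis of the extension statement you invoke; moreover that statement, for a representation defined only on a dense $*$-subalgebra rather than on all of $\Sec_c(\G;\hat\BB)$, is not an off-the-shelf form of the disintegration theorem.

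The step can be repaired, and the repair is essentially the paper's route through the $I$-norm. By disintegration, $\pi$ is $I$-norm bounded on $\ccsas$; since $\|f\|_{I}\le\|F_f\|_{I}$ holds trivially (the supremum over $\chi$ sits outside rather than inside the integrals --- the paper even records equality of the $I$-norms), the induced map on $\set{F_f:f\in\ccsas}$ is bounded for the $I$-norm of $\ccgshgtw$; your inductive-limit density upgrades to $I$-norm density, and convolution and involution are $I$-norm continuous, so the representation extends to $\ccgshgtw$ and the reverse inequality of universal norms follows. With that replacement your argument becomes, in substance, the paper's: a $*$-homomorphism well behaved for the $I$-norms with $I$-norm dense range, giving $\cshacgsw\cong\csgshgtw$, followed by the same inner-product comparison of Hilbert modules for the reduced norms.
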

\begin{proof}
  We first prove the isomorphism of full $C^*$-algebras. If
  $\chi\in\hat A(r(\sigma))$, then since evaluation at $\chi$ passes
  through the integral,
  \begin{align}
    \label{eq:50}
    F_{f}*F_{g}(\sigma)(\chi)
    &= \int_{\G} F_{f}(\tau)(\chi)
      \hauto_{\tau}(F_{g}(\tau^{-1}\sigma)(\chi)
      \,d\alpha^{r(\sigma)}(\dot \tau)  \\
    &= \int_{\G} F_{f}(\tau)(\chi)
      F_{g}(\tau^{-1}\sigma)(\chi\cdot \tau)
      \,d\alpha^{r(\sigma)}(\dot\tau) \\
    &= \int_{\G}f(\chi,\tau) g(\chi\cdot\tau,\tau^{-1}\sigma)
      \,d\alpha^{r(\sigma)} (\dot\tau).
  \end{align}
  Hence $F_{f*g}=F_{f}*F_{g}$.  A similar computation shows that
  $F_{f^{*}}= F_{f}^{*}$.  Thus $f\mapsto F_{f}$ is a $*$-isomorphism
  onto its range.

  The $\|\cdot\|_{I}$-norm on $\ccgshgtw$ is given by
  \begin{equation}
    \label{eq:46}
    \|F_{f}\|_{I}=\max\big\{
    \|F_{f}\|_{I,r},\|F_{f}\|_{I,s}\bigr\},
  \end{equation}
  where
  \begin{align}
    \label{eq:47}
    \|F_{f}\|_{I,r}=
    \sup_{u\in\go}\int_{\G}\|F_{f}(\sigma)\|_{\infty}
    \,d\alpha^{u}(\dot\sigma)
    \quad\text{and}\quad
    \|F_{f}\|_{I,s}=\sup_{u\in\go}\int_{\G}
    \|F_{f}(\sigma)\|_{\infty}\,d\alpha_{u}(\dot \sigma).
  \end{align}

  The set $\set{F_{f}:f\in\ccsas}$ is clearly dense in $\ccgshgtw$ in
  this $\|\cdot\|_{I}$-norm.

  There exists $\chi_{\sigma}\in \hA(r(\sigma))$ such that
  $\|F_{f}(\sigma)\|_{\infty}=|f(\chi_{\sigma},\sigma)|$.  Thus
  \begin{align}
    \label{eq:48}
    \|F_{f}\|_{I,r}
    &=\sup_{u\in\go}\int_{\G}|f(\chi_{\sigma},\sigma)|
      \,d\alpha^{u}( \dot\sigma)  \\
    &= \sup_{\substack{u\in\go \\ \hp(\chi)=u}} \int_{\G} |f(\chi,\sigma)|
    \,d\alpha^{u} (\dot\sigma) \\
    &=\sup_{\chi\in\hA} \int_{\G}|f(\chi,\sigma)|
      \,d\alpha^{\hp(\chi)}(\dot \sigma) \\
    &=\|f\|_{I,r}.
  \end{align}
  Similarly, $\|F_{f}\|_{I,s}=\|f\|_{I,s}$, and $f\mapsto F_{f}$ is
  isometric for the respective $I$-norms. The isomorphism of full
  $C^*$-algebras follows.

  For the isomorphism of reduced $C^*$-algebras, let $\H(\hat{\BB})$
  be the right-Hilbert $C_0(\hA)$-module described in
  Section~\ref{sec:Fell-bundles-csa} for the Fell bundle $\hat{\BB}$.
  Regard $\H(\hat{\BB})$ as a right $C_0(\hA)$-module by identifying
  $C_0(\G^{(0)}; \hat{\E})$ with $C_0(\hA)$ via
  $f \mapsto \bigl(\chi \mapsto f(p(\chi))(\chi)\bigr)$. Then
  $C_c(\G, \hat{\BB})$ acts on the left of $\H(\hat{\BB})$ by
  convolution, and the map implementing this action is isometric for
  the reduced norm on $C_c(\G, \hat{\BB})$. Now consider the Hilbert
  module $\H(\hA \rtimes \G; \hat{\Sigma})_{C_0(\hA)}$ obtained from
  the twist $\hat{\Sigma}$ as described in
  Section~\ref{sec:Fell-bundles-csa}, so that the left action of
  $C_c(\hA \rtimes \G; \hat{\Sigma})$ on
  $\H(\hA \rtimes \G; \hat{\Sigma})$ by convolution is isometric for
  the reduced norm on $C_c(\hA \rtimes \G; \hat{\Sigma})$. A
  straightforward calculation shows that
  $\langle f, g\rangle_{C_0(\hA)} = \langle F_f,
  F_g\rangle_{C_0(\hA)}$ and so $f \mapsto F_f$ extends to an
  isomorphism of Hilbert modules
  $\H(\hA \rtimes \G; \hat{\Sigma}) \cong \H(\hat{\BB})$, which
  intertwines the left actions because $f \mapsto F_f$ is a
  homomorphism. Hence
  \[
    \|f\|_{\csrhacgsw} = \|f\|_{\Ll(\H(\hA \rtimes \G; \hat{\Sigma}))}
    = \|F_f\|_{\Ll(\H(\hat{\B}))} = \|F_f\|_{\csrgshgtw}.\qedhere
  \]
\end{proof}

Since we can identify $\cisas$ and $\ccgshgtw$, we can view $\cisas$
as a dense subalgebra of $\cshacgsw$.  Furthermore, if
$f\in\cc(\Sigma)$, then
\begin{equation}
  \label{eq:49}
  \Phi(f)(\chi,\sigma)=\delta(\sigma)^{\half}\int_{\A} f(a\sigma)
  \overline{\chi(a)}\,d\beta^{r(\sigma)}(a)
\end{equation}
defines an element of $\cisas$.  Noticing that
$\Phi(f)(\chi,\sigma)=\renj(f)(\sigma)(\chi)$, we can combine
Theorem~\ref{thm-hm} and Proposition~\ref{prop-easy-hm} to obtain the
main result in this section.

\begin{thm}
  \label{thm-main-mrw96} Let $\Sigma$ be a second countable locally
  compact groupoid with a Haar system $\lambda$.  Suppose that $\A$ is
  a closed abelian normal subgroup bundle with a Haar system $\beta$.
  Then the map $\Phi:\cc(\Sigma)\to \cisas$ given in \eqref{eq:49}
  extends to an isomorphism of $\cs(\Sigma)$ onto the restricted
  \cs-algebra $\cshacgsw$. Moreover, this isomorphism descends to an
  isomorphism $\cs_r(\Sigma) \cong \cs_r(\hA\rtimes\G;\Sigmaw)$.
\end{thm}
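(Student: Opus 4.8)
The plan is to obtain the desired isomorphism by composing the isomorphism of Theorem~\ref{thm-hm} with the inverse of the isomorphism of Proposition~\ref{prop-easy-hm}, after transporting the former through the Gelfand transform, and then to check that the resulting map agrees with $\Phi$ on $\cc(\Sigma)$.

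First, since $\A$ is abelian it is amenable, so Theorem~\ref{thm-hm} applies in full and the map $\renj$ of Lemma~\ref{lem-homo} extends to isomorphisms $\cs(\Sigma)\cong\csgsgtw$ and $\cs_r(\Sigma)\cong\csrgsgtw$. The Gelfand transform $\F\colon\cs(\A)\to C_0(\hA)$ is a $C_0(\go)$-algebra isomorphism carrying the \cs-bundle $\E$ onto $\hat\E$, and comparing \eqref{eq:8} and \eqref{eq:11} with \eqref{eq:41} and \eqref{eq:42} shows that it intertwines $\auto$ with $\hauto$ and the twisting map $\gtw$ with $\hat\gtw$. Hence it induces an isomorphism of Fell bundles $\BB(\E,\auto,\gtw)\cong\hat\BB(\hat\E,\hat\auto,\hat\gtw)$ over $\G$, and therefore isomorphisms of the associated full and reduced \cs-algebras, compatible with the inductive-limit topologies. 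Under these identifications the isomorphisms of Theorem~\ref{thm-hm} become maps $\cs(\Sigma)\to\csgshgtw$ and $\cs_r(\Sigma)\to\csrgshgtw$ given on $\cc(\Sigma)$ by the formula \eqref{eq:44aa} for $\hat\renj$.

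Next, Proposition~\ref{prop-easy-hm} provides isomorphisms $\cshacgsw\cong\csgshgtw$ and $\csrhacgsw\cong\csrgshgtw$, implemented on the dense $*$-subalgebra $\ccsas$ by $f\mapsto F_f$ with $F_f(\sigma)(\chi)=f(\chi,\sigma)$. Composing $\hat\renj$ with the inverse of $f\mapsto F_f$ yields isomorphisms $\cs(\Sigma)\cong\cshacgsw$ and $\cs_r(\Sigma)\cong\csrhacgsw$. It then remains to identify the composite on $\cc(\Sigma)$: for $f\in\cc(\Sigma)$ its image is the unique element of $\ccsas$ whose associated section of $\hat\BB(\hat\E,\hat\auto,\hat\gtw)$ is $\hat\renj(f)$, and comparing \eqref{eq:44aa} with \eqref{eq:49} shows that this element is exactly $\Phi(f)$. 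Hence the composite extends $\Phi$, and since the same reasoning applies verbatim at the level of reduced norms, the theorem follows.

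The substantive work has already been carried out in Theorem~\ref{thm-hm} and Proposition~\ref{prop-easy-hm}; the only point in the present argument that demands genuine care is the middle step---verifying that the Gelfand transform really is an isomorphism of the twisted dynamical systems $(\E,\Sigma,\auto,\gtw)$ and $(\hat\E,\Sigma,\hat\auto,\hat\gtw)$, equivalently of the two Fell bundles over $\G$---so that the two isomorphism theorems can be spliced together. Granting that compatibility, everything else is bookkeeping with the explicit formulas \eqref{eq:44aa} and \eqref{eq:49}.
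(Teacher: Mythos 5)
Your proposal is correct and follows essentially the same route as the paper: the paper likewise transports the isomorphism of Theorem~\ref{thm-hm} through the Gelfand transform to the bundle $\hat\BB(\hat\E,\hat\auto,\hat\gtw)$ (using amenability of the abelian bundle $\A$ for the reduced case), composes with the identification $f\mapsto F_f$ of Proposition~\ref{prop-easy-hm}, and observes that $\Phi(f)(\chi,\sigma)=\hat\renj(f)(\sigma)(\chi)$, so the composite extends $\Phi$.
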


\begin{remark} \label{rem-4.2} Theorem~\ref{thm-main-mrw96}
  generalizes \cite{mrw:tams96}*{Proposition~4.5} where it was assumed
  that $\A=\Sigma'$, so that $\G$ is principal, and that $\cs(\Sigma)$
  is CCR.
\end{remark}

\section{Examples and applications}\label{sec:more examples}

\subsection{Closed normal abelian subgroups}\label{sec:clos-norm-abel}

Let $G$ be a locally compact group, and let $H \le G$ be a closed
normal abelian subgroup. Putting $\mathcal{A} := H$, $\Sigma := G$ and
$\G := G/H$, we obtain an instance of the situation of
Section~\ref{sec:abelian-case}, in which the groupoids involved have a
single unit.

The group $G$ acts on the left of $H$ by conjugation, this descends to
an action of $G/H$ because $H$ is abelian, and these left actions
induce right actions of $G$ and $G/H$ on the space $\widehat{H}$. So
we obtain an extension of groupoids
\[
  \begin{tikzcd}
    \widehat{H} \times H\arrow[r]& \widehat{H} \rtimes G \arrow[r] &
    \widehat{H} \rtimes (G/H)
  \end{tikzcd}
\]
with common unit space $\widehat{H}$ (regarded as a topological
space).

The groupoid $\D$ of Section~\ref{sec:assoc-t-group} is the cartesian
product $(\widehat{H} \rtimes G) \times \T$ of the transformation
groupoid for the action of $G$ on $\widehat{H}$, with the circle
group. The closed subgroupoid $\iota(\widehat{H} \times H)$ is the set
$\{((\chi, h), \overline{\chi(h)}) : (\chi,h) \in \widehat{H} \times
H\}$. So the associated $\T$-groupoid of
Section~\ref{sec:assoc-t-group} is the quotient
\[
  \widehat{\Sigma} := \{[(\chi,g), z] : (\chi,g) \in \widehat{H}
  \rtimes G, z \in \T\},
\]
in which $[(\chi,g), z] = [(\chi, hg), \overline{\chi(h)}z]$ for any
$h \in H$.

The inclusion
$\iota : (\chi, h) \mapsto [(\chi, h), \overline{\chi(h)}]$ and the
groupoid homomorphism $\pi : [(\chi, g), z] \mapsto (\chi, gH)$ yield
the twist
\begin{equation}
  \label{eq:83}
  \begin{tikzcd}
    \widehat{H} \times \T \arrow[r,"\iota"] & \widehat{\Sigma}
    \arrow[r,"\pi"] & \widehat{H} \rtimes (G/H),
  \end{tikzcd}
\end{equation}
and Theorem~\ref{thm-main-mrw96} yields an isomorphism
$\cs(G) \cong \cs(\widehat{H} \rtimes (G/H); \widehat{\Sigma})$.

\begin{remark}\label{rmk:clopen subgroup}
  An interesting special case of the construction described above
  occurs when the closed normal abelian subgroup $H$ is a clopen
  subgroup as studied by Zeller-Meier \cite{zel:jmpa68}*{2.31}.  This
  implies first that the quotient group $G/H$ is a discrete group, and
  second that the group $C^*$-algebra $C^*(H) \cong C_0(\widehat{H})$
  is a subalgebra of $C^*(G)$ that contains an approximate identity
  for $C^*(G)$. Since $G/H$ is discrete, there exists a continuous
  section $\widehat{H} \rtimes (G/H) \to G$ for the quotient
  map. Therefore the twist $\widehat{\Sigma}$ is topologically
  trivial, in the sense that it is determined by a continuous
  $\T$-valued $2$-cocycle on $\widehat{H} \rtimes (G/H)$.
\end{remark}

\begin{example}\label{ex:Heisenberg}
  As an illustrative example, consider the integer Heisenberg group
  \begin{equation}
    \label{eq:87}
    G = \langle \,\text{$a$, $b$, $c$} \mid
    \text{$ab = c ba$, $ca = ac$, $cb = bc$}\,\rangle.
  \end{equation}
  (A similar analysis applies for the Heisenberg group of
  upper-triangular $3 \times 3$ real matrices with diagonal entries
  equal to 1, but the integer example is slightly easier to describe.)
  We consider two natural (clopen) normal abelian subgroups of $G$ in
  the context of our result.
  \begin{compactenum}
  \item \label{it:Heis1} First consider the subgroup $H_c \cong \Z$
    generated by the element $c$.  This $H_c$ is precisely the center
    $\mathcal{Z}(G)$, and so the action of $G/H_c \cong \Z^2$ on
    $\widehat{H}_c \cong \T$ is trivial. Hence the semidirect product
    $\widehat{H}_c \rtimes (G/H_c)$ is just the product
    $\widehat{H}_c \times (G/H_c) \cong \T \times \Z^2$ regarded as
    group bundle. As discussed in Remark~\ref{rmk:clopen subgroup},
    $C^*(H_c) \cong C(\T)$ embeds as a unital subalgebra of $C^*(G)$
    which is central because $H_c$ is central, and therefore makes
    $C^*(G)$ into a $C(\T)$-algebra. For $z \in \T$, the fibre of
    $\widehat{\Sigma}$ corresponding to the character
    $\chi_z : c \mapsto z$ of $H_c$ is the extension of $\Z^2$
    corresponding to the $2$-cocycle
    $\bigl((m_{1},m_{2}),(n_{1},n_{2})\bigr) \mapsto z^{m_2 n_1}$. The
    corresponding fibre of $C^*(H)$ is isomorphic to the rotation
    algebra $A_\theta$ where $z = e^{2\pi i \theta}$. So we recover,
    in this instance, Anderson and Paschke's description
    \cite{andpas:hjm89} of $C^*(G)$ as the section algebra of a field
    of rotation algebras.
  \item Now consider the subgroup $H_{b,c} \cong \Z^2$ of $G$
    generated by $b$ and $c$, so that $\widehat{H}_{b,c} \cong
    \T^2$. The quotient $G/H_{b,c}$ is isomorphic to $\Z$ via
    $aH_{b,c} \mapsto 1$, and acts on $\T^2$ by
    $1 \cdot (w,z) = (zw, z)$, inducing an action $\alpha$ of $\Z$ on
    $C(\T^2)$. 
    The twist $\widehat{\Sigma}$ is the trivial twist over
    $\T^2 \rtimes \Z$, and so we recover the well-known description
    $C^*(G) \cong C^*(\T^2 \rtimes \Z;\widehat{\Sigma} ) \cong
    C^*(\T^2 \rtimes \Z)$ as the crossed-product algebra
    $C(\T^2) \rtimes_\alpha \Z$.
  \end{compactenum}
\end{example}

\begin{remark}
  Williams proved in \cite{wil:tams81}*{pp 357--358} that when $G/H$
  is abelian and $G$ is a semidirect product,
  $G \cong H \rtimes (G/H)$, then
  $C^*(G) \cong C^*(\widehat{H} \rtimes (G/H))$. Note that the
  extension $\widehat{\Sigma}$ is trivial.
\end{remark}

\begin{remark}
  The situation described in
  Example~\ref{ex:Heisenberg}(\ref{it:Heis1}) generalises as
  follows. If $H$ is any closed subgroup of $\mathcal{Z}(G)$, then
  $C^*(H)$ embeds in the centre of the multiplier algebra of $C^*(G)$,
  making $C^*(G)$ into a $C_0(\widehat{H})$-algebra. The
  transformation group $\widehat{H} \rtimes (G/H)$ coincides with the
  group bundle $\widehat{H} \times (G/H)$. Each $\chi \in \widehat{H}$
  determines a central extension
  $\T \longrightarrow \widehat{\Sigma}_\chi \longrightarrow G/H$ in
  which $\widehat{\Sigma}_\chi$ coincides with the quotient group
  $(G \times \T)/\{(h, \overline{\chi(h)}) : h \in H\}$. The fibre of
  $C^*(G)$ corresponding to a character $\chi$ of $H$ is the twisted
  group $C^*$-algebra of this extension.
\end{remark}

\subsection{Extensions of effective \'etale groupoids}\label{sec:etale
  effective}
For the convenience of the reader we recall our standing hypotheses
and the extension~\eqref{eq:ext} where $\Sigma$ is a unit space fixing
extension of a subgroupoid group bundle $\A$.  That is, we have
\begin{equation}
  \label{eq:ext2}
  \begin{tikzcd}[column sep=3cm]
    \A \arrow[r,"\iota", hook] \arrow[dr,shift left, bend right = 15]
    \arrow[dr,shift right, bend right = 15]&\Sigma \arrow[r,"p", two
    heads] \arrow[d,shift left] \arrow[d,shift right]&\G
    \arrow[dl,shift left, bend left = 15] \arrow[dl,shift right, bend
    left = 15]
    \\
    &\go&
  \end{tikzcd}
\end{equation}
where $\iota$ is the inclusion map and $p$ is a continuous open
surjection restricting to a homeomorphism of $\go$ and $\goo$.  We
require that all groupoids in the extension are second countable
locally compact, Hausdorff, both $\Sigma$ and $\A$ have Haar systems
and $\A$ is an abelian group bundle.

In this section,
we consider the situation where the groupoid $\G$ in the
extension~\eqref{eq:ext2} is an effective \'etale groupoid; that is,
$r, s : \G \to \G^{(0)}$ are local homeomorphisms, and the topological
interior of the isotropy bundle of $\G$ is $\G^{(0)}$.  Since our
standing hypotheses include that $\G$ is second-countable and
Hausdorff, the latter is equivalent to the condition that $\G$ is
topologically principal in the sense that the set
$\{x \in \G^{(0)} : \G^x_x = \{x\}\}$ is dense in $\G^{(0)}$
\cite{ren:irms08}*{Proposition~3.6}.

It follows that the transformation groupoid $\hA \rtimes \G$ is also
\'etale and effective; this is certainly well known but also easy to
prove directly:

\begin{lemma}
  Let $\G$ be a locally compact, Hausdorff groupoid acting on the
  right of a locally compact Hausdorff space $X$. If $\G$ is \'etale
  (resp., effective) then so is $X \rtimes \G$.
\end{lemma}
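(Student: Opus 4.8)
The plan is to verify the two conclusions separately and directly from the definitions, exploiting that an \'etale groupoid has a basis of open bisections. Write $\rho\colon X\to\G^{(0)}$ for the anchor map of the action, so that
$X\rtimes\G=\{(x,\gamma)\in X\times\G:\rho(x)=r(\gamma)\}$; this is the subset of $X\times\G$ on which the continuous maps $(x,\gamma)\mapsto\rho(x)$ and $(x,\gamma)\mapsto r(\gamma)$ agree, hence closed since $\G^{(0)}$ is Hausdorff, and so $X\rtimes\G$ is locally compact Hausdorff. Its structure maps are $r(x,\gamma)=x$, $s(x,\gamma)=x\cdot\gamma$, inversion $(x,\gamma)\mapsto(x\cdot\gamma,\gamma^{-1})$, and unit space $\{(x,\rho(x)):x\in X\}$, which we identify with $X$.

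For the \'etale claim I would show that $r\colon X\rtimes\G\to X$ is a local homeomorphism. Given $(x_0,\gamma_0)\in X\rtimes\G$, choose an open bisection $U$ of $\G$ with $\gamma_0\in U$ and set $V=(X\times U)\cap(X\rtimes\G)$, an open neighbourhood of $(x_0,\gamma_0)$. Then $r|_V$ maps $V$ onto $\rho^{-1}(r(U))$, which is open because $r(U)$ is open and $\rho$ is continuous; it is injective because an element of the bisection $U$ is determined by its range; and its inverse $x\mapsto\bigl(x,(r|_U)^{-1}(\rho(x))\bigr)$ is continuous. So $r|_V$ is a homeomorphism onto an open subset of $X$, whence $r$ is a local homeomorphism; since inversion is a homeomorphism of $X\rtimes\G$ and $s$ is $r$ composed with inversion, $s$ is one too. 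Thus $X\rtimes\G$ is \'etale, with the sets $V$ above forming a basis of open bisections.

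For the effective claim I would use the definition that a Hausdorff \'etale groupoid is effective exactly when the interior of its isotropy bundle coincides with its unit space. Let $\mathcal I$ and $\G'$ be the isotropy bundles of $X\rtimes\G$ and of $\G$, so $\operatorname{int}(\G')=\G^{(0)}$, and let $(x_0,\gamma_0)$ lie in the interior of $\mathcal I$; it suffices to show $\gamma_0\in\G^{(0)}$ (the reverse inclusion is automatic, the unit space of an \'etale groupoid being open). Shrinking, choose an open bisection $U\ni\gamma_0$ of $\G$ and an open $O\ni x_0$ in $X$ with $\rho(O)\subseteq r(U)$ and $(O\times U)\cap(X\rtimes\G)\subseteq\mathcal I$, and put $U_0:=(r|_U)^{-1}(\rho(O))\subseteq U$, an open bisection of $\G$ containing $\gamma_0$. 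For any $\gamma\in U_0$, pick $x\in O$ with $\rho(x)=r(\gamma)$; then $(x,\gamma)\in\mathcal I$, so $x\cdot\gamma=x$ and hence $s(\gamma)=\rho(x\cdot\gamma)=\rho(x)=r(\gamma)$, i.e.\ $\gamma\in\G'$. Thus $U_0\subseteq\G'$ is open, so $U_0\subseteq\operatorname{int}(\G')=\G^{(0)}$; in particular $\gamma_0\in\G^{(0)}$, as required.

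The only step that uses more than ``$\G$ is \'etale (resp.\ effective)'' is the assertion that $U_0=(r|_U)^{-1}(\rho(O))$ is open, which requires $\rho(O)$ to be open in $\G^{(0)}$, i.e.\ that the anchor map $\rho$ is open; this is the point I expect to need the most care. Openness of the anchor map is part of the standing conventions on groupoid actions in the locally compact setting, and in any case it holds in the application of this lemma, where $X=\hA$ and $\rho=\hp$ is the projection of a group bundle with a Haar system (cf.\ Lemma~\ref{lem-bundle-haar}); indeed, without it the effective conclusion can fail.
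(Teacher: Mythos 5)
Your argument is correct, and for the \'etale half it is essentially the paper's proof: both take an open bisection $U$ of $\G$ and observe that sets of the form $(W\times U)\cap(X\rtimes\G)$ are open bisections of $X\rtimes\G$; you add the short verification that the range map carries $(X\times U)\cap(X\rtimes\G)$ homeomorphically onto the open set $\rho^{-1}(r(U))$, a detail the paper leaves implicit.

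For effectiveness the two arguments are close in spirit but organized differently, and the genuinely interesting point is the one you flag. The paper starts from an open subset of $X\rtimes\G$ consisting of isotropy, asserts that it is a union of sets $W*V$ with $V$ an open bisection of $\G$ consisting of isotropy, and then applies effectiveness of $\G$ to get $V\subseteq\G^{(0)}$; you instead argue pointwise, pushing a neighbourhood $(O\times U)\cap(X\rtimes\G)$ of an interior isotropy point down to the bisection $U_0=(r|_U)^{-1}(\rho(O))$ of $\G$. The substantive difference is that you make explicit a hypothesis both arguments actually use, namely openness of the anchor map $\rho$: you need it for $U_0$ to be open, and the paper needs it tacitly, since only those $\eta\in V$ with $r(\eta)\in\rho(W)$ are forced to be isotropy of $\G$, and shrinking $V$ to $V\cap r^{-1}(\rho(W))$ is only legitimate when $\rho(W)$ is open. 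Your caveat is not idle: take $\G$ to be the transformation groupoid $[-1,1]\rtimes(\Z/2)$ for the flip $x\mapsto -x$ (\'etale and effective) and $X=\{0\}$ with the restricted action; then $X\rtimes\G\cong\Z/2$ is not effective, so the effective half of the statement genuinely requires openness of the moment map or some substitute. In the paper's application the anchor is the projection $\hp:\hA\to\go$ of the dual group bundle, which is open, so nothing downstream is affected; but your explicit identification of this hypothesis is a gain in precision over the paper's proof rather than a defect of yours.
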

\begin{proof}
  Suppose that $\G$ is \'etale and fix $(x, \gamma) \in X \rtimes
  \G$. Fix an open bisection neighbourhood $U$ of $\gamma$ in $\G$ and
  an open neighbourhood $W$ of $x \in X$. Then
  $(W \times U) \cap (X \rtimes \G) = \{(y, \eta) \in W \times U \mid
  s(y) = r(\gamma)\}$ is an open bisection neighbourhood of
  $(x, \gamma)$.  Hence $X \rtimes \G$ is \'etale.

  Now suppose that $\G$ is effective. Suppose that
  $U \subseteq X \rtimes \G$ is open and consists entirely of
  isotropy. Then it is a union of sets of the form $W * V$ where
  $W \subseteq X$ is open, and $V \subseteq \G$ is an open bisection
  consisting of isotropy. For any such set, since $\G$ is effective,
  we have $V \subseteq \G^{(0)}$, and so
  $W * V \subseteq X * \G^{(0)} = (X \rtimes \G)^{(0)}$.
\end{proof}

It follows from our main theorem that $C^*_r(\Sigma)$ is equal to the
reduced $C^*$-algebra of a twist $\widehat{\Sigma}$ over the \'etale
effective groupoid $\hA \rtimes \G$, and so Renault's theory of Cartan
subalgebras of $C^*$-algebras applies. In the case where $\Sigma$ is
\'{e}tale the first assertion in the following theorem may be deduced
from \cite{bnrsw:ieot16}*{Corollary~4.5}. In addition, we provide an
explicit construction of the Weyl twist which is not included in
\cite{bnrsw:ieot16}. Our result is also related to Theorem~5.8 of  
\cite{dgnrw:jfa20}: in their case $\Sigma$ is \'{e}tale but 
they also consider a continuous $\T$-valued $2$-cocycle.

\begin{thm}\label{cartan}
  Suppose that $\Sigma$ is an extension of a wide normal abelian
  subgroup bundle with a Haar system as in~\eqref{eq:ext2}.  Further
  assume that $\G$ is \'etale and effective. Then
  $C^*(\mathcal{A}) \cong C_0(\hA)$ is a Cartan subalgebra of
  $C^*_r(\Sigma)$. The Weyl twist of the Cartan pair
  $(C^*_r(\Sigma), C^*(\A))$ is $(\hA \rtimes \G, \widehat{\Sigma})$.
\end{thm}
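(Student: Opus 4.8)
The plan is to deduce this from Renault's reconstruction theorem for twists over topologically principal \'etale groupoids, feeding in the isomorphism already established in Theorem~\ref{thm-main-mrw96}. By that theorem, $\Phi$ extends to an isomorphism $\cs_r(\Sigma)\cong\csrhacgsw$, and by the preceding lemma the transformation groupoid $\hA\rtimes\G$ is \'etale and effective; since $\hA\rtimes\G$ is also second countable and Hausdorff, effectiveness here is equivalent to being topologically principal (\cite{ren:irms08}*{Proposition~3.6}). So all that remains is to check that $\Phi$ carries the natural copy of $\cs(\A)$ inside $\cs_r(\Sigma)$ onto the canonical Cartan diagonal $C_0\bigl((\hA\rtimes\G)^{(0)}\bigr)=C_0(\hA)$ inside $\csrhacgsw$, and then to invoke Renault.

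For the identification of the diagonal, the first observation is that, because $\G$ is \'etale, $\goo$ is clopen in $\G$, so $\A=p^{-1}(\goo)$ is clopen in $\Sigma$. Hence $\cc(\A)\subseteq\cc(\Sigma)$ via extension by zero, and since $\A$ is an abelian---hence amenable---group bundle, the closure of $\cc(\A)$ in $\cs_r(\Sigma)$ is a copy of $\cs(\A)=\cs_r(\A)$. I would then compute $\Phi$ on $f\in\cc(\A)$: if $\sigma\notin\A$ then $a\sigma\notin\A$ for every $a\in\A$, so $\Phi(f)(\chi,\sigma)=\delta(\sigma)^{\half}\int_{\A}f(a\sigma)\overline{\chi(a)}\,d\beta^{r(\sigma)}(a)=0$; thus $\Phi(f)$ is supported over the unit space $\hA$ of $\hA\rtimes\G$. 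For a unit $u=\hp(\chi)$, bi-invariance of $\beta^{u}$ together with $\delta|_{A(u)}=\Delta_{A(u)}\equiv1$ gives $\Phi(f)(\chi,u)=\int_{A(u)}f(a)\overline{\chi(a)}\,d\beta^{u}(a)=\hat f(\chi)$. In other words, $\Phi|_{\cc(\A)}$ is the Gelfand transform followed by the canonical inclusion $C_0(\hA)\hookrightarrow\csrhacgsw$ (legitimate because the unit space of the \'etale Hausdorff groupoid $\hA\rtimes\G$ is clopen). Since the Gelfand transform is an isomorphism $\cs(\A)\cong C_0(\hA)$ and $\Phi$ is isometric, $\Phi$ restricts to an isomorphism of $\overline{\cc(\A)}^{\cs_r(\Sigma)}\cong\cs(\A)$ onto $C_0\bigl((\hA\rtimes\G)^{(0)}\bigr)$, so the $\Phi$-image of the pair $\bigl(\cs_r(\Sigma),\cs(\A)\bigr)$ is precisely the canonical diagonal pair of the \'etale twist $\Sigmaw$ over $\hA\rtimes\G$.

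Finally I would quote Renault: for a twist $\Sigmaw$ over a second countable, locally compact, Hausdorff, topologically principal \'etale groupoid $\hA\rtimes\G$, the subalgebra $C_0\bigl((\hA\rtimes\G)^{(0)}\bigr)$ is a Cartan subalgebra of $\csrhacgsw$, and the associated Weyl twist is $(\hA\rtimes\G,\Sigmaw)$. Since being a Cartan pair and the associated Weyl twist are invariants of $*$-isomorphism, transporting through $\Phi^{-1}$ yields that $\cs(\A)\cong C_0(\hA)$ is a Cartan subalgebra of $\cs_r(\Sigma)$ with Weyl twist $(\hA\rtimes\G,\Sigmaw)$. I expect the only real work to be the bookkeeping in the middle paragraph---in particular, recognizing that $\A$ sits as a \emph{clopen} subgroupoid of $\Sigma$ exactly because $\G$ is \'etale (this is what lets $\cs(\A)$ be realized inside $\cs_r(\Sigma)$ as the closure of a transparent family of functions), and then matching $\Phi$ of that family with the canonical Cartan diagonal; the remaining ingredients (Theorem~\ref{thm-main-mrw96}, the \'etale--effective lemma, and Renault's Cartan/reconstruction theorem) are all available off the shelf.
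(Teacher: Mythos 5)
Your proof is correct and takes essentially the same route as the paper: apply Renault's \cite{ren:irms08}*{Theorem~5.2} to the twist $\hA\times\T\to\Sigmaw\to\hA\rtimes\G$ over the \'etale, effective (hence topologically principal) groupoid $\hA\rtimes\G$, and transport the resulting Cartan pair and Weyl twist through the reduced-norm isomorphism of Theorem~\ref{thm-main-mrw96}. Your middle paragraph---checking that $\A$ is clopen in $\Sigma$ and that $\Phi$ carries the closure of $\cc(\A)$ onto the diagonal $C_0(\hA)\subset\csrhacgsw$ via the Gelfand transform---just makes explicit an identification the paper leaves implicit, and it is a correct and worthwhile verification rather than a different approach.
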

\begin{proof}
  We apply \cite{ren:irms08}*{Theorem~5.2} to the twist
  $\hA\times\T \longrightarrow \Sigmaw \longrightarrow \hA\rtimes \G$
  to see that the pair
  $(C^*_r(\hA \rtimes \G; \widehat{\Sigma}), C_0(\hA))$ is a Cartan
  pair with Weyl twist $(\hA \rtimes \G; \widehat{\Sigma})$. The
  result then follows from the final statement of
  Theorem~\ref{thm-main-mrw96}.
\end{proof}

Many of our examples do not require the full power of our
theorem. Note, though, that in the next example $\Sigma$ is not
\'{e}tale and so \cite{bnrsw:ieot16}*{Corollary~4.5} cannot be used.

\begin{example}
  Let $(A, B)$ be a Cartan pair and let $(\G, \Sigma)$ be the
  associated Weyl twist. Then $\Sigma$ is an extension of $\G$ by the
  wide normal abelian subgroup bundle $\A \cong \T \times \G^{(0)}$.
  Moreover, since $(A, B)$ is a Cartan pair, $\G$ is \'etale and
  effective. Hence, by Theorem~\ref{cartan},
  $C^*(\A) \cong C_0(\Z \times \G^{(0)})$ is a Cartan subalgebra of
  $C^*_r(\Sigma)$.
\end{example}

\begin{example}
  Let $X$ be a locally compact Hausdorff space and let
  $n \mapsto \sigma^{n}$ be an action of $\NN^k$ by local
  homeomorphisms of $X$. Let $\Sigma$ be the associated
  Deaconu--Renault groupoid
  $\{(x, m-n, y) : \sigma^m(x) = \sigma^n(y)\}$. Let $\A$ denote the
  interior of the isotropy in $\Sigma$, and suppose that $\A$ is a
  clopen subset of $\Sigma$ (this may seem like a restrictive
  hypothesis but it is automatic if $\Sigma$ is minimal by
  \cite{kps:jncg16}*{Proposition~2.1} applied to the cocycle
  $c(x, m, y) = m$, in which case it is given by
  $\A = \{(x,m,x) : (y,m,y) \in \Sigma\text{ for all }y \in X\}$
  \cite{simwil:art15}*{Proposition~3.10}). Then $\mathcal{A}$ is an
  abelian group bundle. Specifically, for each $x \in X$, the set
  \[
    \Stab^{\operatorname{ess}}_{x}\{p - q : p,q \in \Z^k\text{ and
    }\sigma^p = \sigma^q\text{ on a neighbourhood of }x\}
  \]
  is a subgroup of $\Z^k$, we have
  $\Stab^{\operatorname{ess}}_x = \Stab^{\operatorname{ess}}_y$
  whenever $\Sigma^x_y \not= \emptyset$, and as a set,
  \[
    \A = \bigsqcup_{x \in X} \{x\} \times
    \Stab^{\operatorname{ess}}_x.
  \]
  Since $\A$ is clopen, we can form the quotient groupoid
  $\G = \Sigma/\A$, which is \'etale and effective. A quick
  calculation shows that the action of $\G$ on $\A$ is given by
  $\gamma \cdot (s(\gamma), m) = (r(\gamma), m)$, and so the induced
  action of $\G$ on $\hA$ is given by
  $(r(\gamma), \chi) \cdot \gamma = (s(\gamma), \chi)$ for
  $\chi \in (\Stab_{r(\gamma)})\widehat{\;}$. So, as a groupoid,
  $\hA \rtimes \G$ is just the fibred product $\hA * \G$---for example
  if $\G$ is minimal, then $\hA \rtimes \G \cong \hA_x \times \G$ for
  any $x \in X$. Since $\A$ is open in $\Sigma$, the map $p_{\A}$ is
  open and Theorem~\ref{thm-main-mrw96} applies.  Combining with
  Theorem~\ref{cartan}, this implies that
  $(C^*(\hA \rtimes \G; \widehat{\Sigma}), C_0(\hA))$ is a Cartan pair
  with Weyl twist $(\hA \rtimes \G; \widehat{\Sigma})$.
\end{example}

\begin{example}
  As a particular case of the preceding example, let $\Lambda$ be a
  row-finite $k$-graph with no sources, and let $\G_\Lambda$ be the
  associated infinite-path groupoid, which is the Deaconu--Renault
  groupoid for the action of $\NN^k$ on $\Lambda^\infty$ by shift maps
  \cite{kumpas:nyjm00}. Suppose that the interior $\mathcal{I}$ of the
  isotropy in $\G_\Lambda$ is closed---for example, this is automatic
  if $\Lambda$ is cofinal, in which case
  $\mathcal{I} \cong \Lambda^\infty \times
  \operatorname{Per}(\Lambda)$ \cite{kps:jncg16}*{Corollary~2.2}. (See
  \cite{bly:mjm17}*{Theorem~4.4} for a characterisation of when
  $\mathcal{I}$ is closed in $\G_\Lambda$.) Then our main theorem
  implies that $C^*(\mathcal{I}) \cong C_0(\hat{\mathcal{I}})$ is a
  Cartan subalgebra of $C^*(\Lambda)$, recovering
  \mbox{(c)\;$\implies$\;(b)} of \cite{bnrsw:ieot16}*{Corollary~4.6},
  and also gives a recipe for describing the associated Weyl twist.
\end{example}

\subsection{Transformation groupoids}
\label{sec:tr_gr}

Assume that $\Gamma$ is a locally compact Hausdorff groupoid with Haar
system which acts on the right on a locally compact space $X$. We
write $s_{X}:X\to \Gamma^{(0)}$ for the moment map and we let
$\Sigma:=X\rtimes \Gamma$ be the transformation groupoid. Recall that
$X\rtimes \Gamma=\set{ (x,\gamma)\in X\times
  \Gamma:s_{X}(x)=r(\gamma)}$ with the topology inherited from the
product topology and the range and source maps given by
$r(x,\gamma)=(x,r(\gamma))$ and
$s(x,\gamma)=(x\cdot \gamma,s(\gamma))$.  Then
$(x,\gamma)(x\cdot \gamma,\eta)=(x,\gamma\eta)$ while
$(x,\gamma)^{-1}=(x\cdot \gamma,\gamma^{-1})$.  We identify the unit
space of $X\rtimes \Gamma$ with $X$ as usual via the map
$(x,r(\gamma))\to x$.  Assume that $\N$ is a wide normal subgroup
bundle of $\Gamma$ which acts trivially on $X$: $x\cdot a=x$ for all
$x\in X$ and $a\in \N$. Then the action of $\Gamma$ on $X$ descends to
an action of $\Gamma/\N$ on $X$. Therefore, if we let $\A:=X\ast \N$
be the pull-back bundle and let $\G:=X\rtimes \Gamma/\A$, we obtain a
groupoid extension as in \eqref{eq:ext} with $\go=X$. If $\N$ is
abelian, then $\A=X*\N$ is also abelian. Then the $\T$-groupoid
  $\Sigmaw$ constructed in Section~\ref{sec:assoc-t-group} can be
  viewed as equivalence classes with $x\in X$,
  $\chi\in \hat{N}_{s(x)}$, $\gamma\in \Gamma$ and $z\in \T$ such that
  $\bigl[(\chi,z,x,a\gamma)\bigr] =\bigl[(\chi,\chi(a)z,x,\gamma)
  \bigr]$.  The transformation groupoid $\hA\rtimes\G$ is represented
  by triples $\set{(\chi,x,\dot\gamma): \text{$\chi\in \hat N_{s(x)}$
      and $r(\gamma)=s(x)$}}$.  Then Theorem~\ref{thm-main-mrw96}
  implies that the transformation groupoid \cs-algebra $\cs(\Sigma)$
  is isomorphic to the restricted groupoid \cs-algebra
  $\cs(\hA\rtimes\G;\Sigmaw)$ and similarly for the reduced algebras.


\begin{example}[Rational Rotation Algebra]
  We consider the group $\Gamma=\Z$ acting on $\T$ via rotation by
  $\alpha\in \Q$: $z\cdot k:=ze^{2\pi ik\alpha}$. If $\alpha=m/n$ with
  $m$ and $n$ relatively prime, then $n\Z$ fixes the action.  Hence we
  get an instance of the above situation with $\N$ the group $n\Z$,
  $\Gamma=\Z$ and $\G=\Z_n$. Therefore the groupoid extension is
  \begin{equation}
    \label{eq:rational}
    \begin{tikzcd}
      \T\times n\Z \arrow[r,"i",hook] & \T\rtimes \Z \arrow[r,"p",two
      heads] & \T \rtimes \Z_n.
    \end{tikzcd}
  \end{equation}
  The $C^*$-algebra $C^*(\T\rtimes \Z)$ is the rational rotation
  $C^*$-algebra $\A_\alpha$ (see, for example, \cite{DeBr:84}). If we
  let $\T_{n}=\T/\Z_{n}$ be the dual of $n\Z$, then $\Sigmaw$ consists
  of equivalence classes of elements of $\T_{n}\times\T^{2}\times \Z$
  where $[\chi,w,z,nl+k]=[\chi^{nl},w,z,k]$.
  In particular, we realize the rotation algebra $\A_{\alpha}$ as the
  appropriate completion of continuous functions $F$ on
  $\T_{n}\times\T\times\Z$ such that
  $F(\chi,w,nl+k)=\chi^{nl}F(\chi,w,k)$.
\end{example}

\begin{remark}\label{rem-marius-plus}
  The arguments from the above example can be easily extended to
  actions of $\Z$ on compact spaces by periodic homeomorphisms.  These
  ideas can be sharpened by viewing the construction of $\Sigmaw$ in
  Section~\ref{sec:assoc-t-group} as an instance of general
  ``pushout'' construction that will purse in a future project
  \cite{ikrsw:xx20b}.
\end{remark}

\begin{example}
  A large class of examples is given by twisted groupoid crossed products
  $(\G,\Sigma,\E,\auto,\gtw)$ as defined in Section
  \ref{sec:group-cross-prod} under the assumption that the \cs-bundle
  $\E$ consists of abelian \cs-algebras.  Then the $C_0(\go)$-algebra
  $A=\Sec_0(\go;\E)$ is an abelian \cs-algebra and, thus, isomorphic
  to $C_0(X)$, where $X$ is the Gelfand spectrum of $A$. Since $A$ is
  a $C_0(\go)$-algebra the bundle map $s:X\to \go$ is
  continuous. Moreover, $X=\bigsqcup X_u$, where $X_u$ is the spectrum
  of the fibre $E(u)$. The action of $\Sigma$ on $\E$ induces a right
  action on $X$ via $\auto(f)(x)=f(x\cdot \sigma)$ for all
  $f\in C_0(X_{s(\sigma)})$. Since the action of $\A$ on $\E$ is
  unitarily implemented as in \eqref{eq:61} and since $E(u)$ is
  abelian, $\A$ acts trivially on $X$. Thus we obtain an extension as
  above with $\N:=\A$ and $\Gamma:=\Sigma$.  Moreover, $\csgsgtw$ is
  isomorphic to $C^*(X\rtimes \Sigma)$. Indeed one can prove that the
  map that sends $f\in C_c(X\rtimes \Sigma)$ to $\hat{f}\in \ccgsgtw$
  defined via
  \[
    \hat{f}(\sigma)(x):=\delta(\sigma)^{1/2}\int_{\A}
    f(x,a\sigma)\overline{\gtw(a)}\,d\beta^{r(\sigma)}(a)
  \]
  extends to an isomorphism of $C^*$-algebras. In the above equation
  $\{\beta^u\}$ is the Haar system on
  $\A$ 
  and $\delta$ is the modular map defined in Lemma
  \ref{lem-omega}. Theorem \ref{thm-main-mrw96} implies that
  $\csgsgtw$ is isomorphic to the restricted $C^*$-algebra
  $\cshacgsw $, where $\Sigmaw$ and $\hA\rtimes \G$ are described
  above. Moreover, the isomorphism descends to the level of reduced
  $C^*$-algebras.
\end{example}



\def\sp{^}
\begin{bibdiv}
\begin{biblist}

\bib{andpas:hjm89}{article}{
      author={Anderson, Joel},
      author={Paschke, William},
       title={The rotation algebra},
        date={1989},
        ISSN={0362-1588},
     journal={Houston J. Math.},
      volume={15},
       pages={1\ndash 26},
      review={\MR{90k:46126}},
}

\bib{arm:phd19}{thesis}{
      author={Armstrong, Becky},
       title={Simplicity of {$C^*$}-algebras of topological higher-rank
  graphs},
        type={{Ph.D.} Dissertation},
        date={2019},
}

\bib{barli:am17}{article}{
      author={Barlak, Sel\c{c}uk},
      author={Li, Xin},
       title={Cartan subalgebras and the {UCT} problem},
        date={2017},
        ISSN={0001-8708},
     journal={Adv. Math.},
      volume={316},
       pages={748\ndash 769},
         url={https://doi.org/10.1016/j.aim.2017.06.024},
      review={\MR{3672919}},
}

\bib{bly:mjm17}{article}{
      author={Brown, Jonathan},
      author={Li, Hui},
      author={Yang, Dilian},
       title={Cartan subalgebras of topological graph algebras and {$k$}-graph
  {$\rm C^*$}-algebras},
        date={2017},
        ISSN={1867-5778},
     journal={M\"{u}nster J. Math.},
      volume={10},
      number={2},
       pages={367\ndash 381},
      review={\MR{3725500}},
}

\bib{bm:xx16}{unpublished}{
      author={Buss, A.},
      author={Meyer, R.},
       title={Iterated crossed products for groupoid fibrations},
     address={preprint},
        date={2016},
        note={(arXiv:math.OA.1604.02015v1)},
}

\bib{bnrsw:ieot16}{article}{
      author={Brown, Jonathan~H.},
      author={Nagy, Gabriel},
      author={Reznikoff, Sarah},
      author={Sims, Aidan},
      author={Williams, Dana~P.},
       title={Cartan subalgebras in {$C^*$}-algebras of {H}ausdorff \'etale
  groupoids},
        date={2016},
        ISSN={0378-620X},
     journal={Integral Equations Operator Theory},
      volume={85},
      number={1},
       pages={109\ndash 126},
         url={http://dx.doi.org/10.1007/s00020-016-2285-2},
      review={\MR{3503181}},
}

\bib{DeBr:84}{article}{
      author={De~Brabanter, Marc},
       title={The classification of rational rotation {$C\sp{\ast}
  $}-algebras},
        date={1984},
        ISSN={0003-889X},
     journal={Arch. Math. (Basel)},
      volume={43},
      number={1},
       pages={79\ndash 83},
         url={https://doi.org/10.1007/BF01193614},
      review={\MR{758343}},
}

\bib{dgnrw:jfa20}{article}{
      author={Duwenig, A.},
      author={Gillaspy, E.},
      author={Norton, R.},
      author={Reznikoff, S.},
      author={Wright, S.},
       title={Cartan subalgebras for non-principal twisted groupoid
  {$C^*$}-algebras},
        date={2020},
        ISSN={0022-1236},
     journal={J. Funct. Anal.},
      volume={279},
      number={6},
       pages={108611, 40},
         url={https://doi.org/10.1016/j.jfa.2020.108611},
      review={\MR{4096726}},
}

\bib{fd:representations1}{book}{
      author={Fell, James M.~G.},
      author={Doran, Robert~S.},
       title={Representations of {$*$}-algebras, locally compact groups, and
  {B}anach {$*$}-algebraic bundles. {V}ol. 1},
      series={Pure and Applied Mathematics},
   publisher={Academic Press Inc.},
     address={Boston, MA},
        date={1988},
      volume={125},
        ISBN={0-12-252721-6},
        note={Basic representation theory of groups and algebras},
      review={\MR{90c:46001}},
}

\bib{fd:representations2}{book}{
      author={Fell, James M.~G.},
      author={Doran, Robert~S.},
       title={Representations of {$*$}-algebras, locally compact groups, and
  {B}anach {$*$}-algebraic bundles. {V}ol. 2},
      series={Pure and Applied Mathematics},
   publisher={Academic Press Inc.},
     address={Boston, MA},
        date={1988},
      volume={126},
        ISBN={0-12-252722-4},
        note={Banach $*$-algebraic bundles, induced representations, and the
  generalized Mackey analysis},
      review={\MR{90c:46002}},
}

\bib{goe:rmjm12}{article}{
      author={Goehle, Geoff},
       title={The {M}ackey machine for crossed products by regular groupoids.
  {II}},
        date={2012},
        ISSN={0035-7596},
     journal={Rocky Mountain J. Math.},
      volume={42},
      number={3},
       pages={873\ndash 900},
         url={https://doi.org/10.1216/RMJ-2012-42-3-873},
      review={\MR{2966476}},
}

\bib{gre:am78}{article}{
      author={Green, Philip},
       title={The local structure of twisted covariance algebras},
        date={1978},
     journal={Acta Math.},
      volume={140},
       pages={191\ndash 250},
}

\bib{hol:jot17}{article}{
      author={Holkar, Rohit~Dilip},
       title={Topological construction of {$C^*$}-correspondences for groupoid
  {$C^*$}-algebras},
        date={2017},
        ISSN={0379-4024},
     journal={J. Operator Theory},
      volume={77},
      number={1},
       pages={217\ndash 241},
         url={https://doi-org.unr.idm.oclc.org/10.7900/jot.2016mar21.2116},
      review={\MR{3614514}},
}

\bib{ikrsw:xx20c}{unpublished}{
      author={Ionescu, Marius},
      author={Kumjian, Alex},
      author={Renault, Jean~N.},
      author={Sims, Aidan},
      author={Williams, Dana~P.},
       title={A {M}ackey normal subgroup analysis for groupoids},
        date={2020},
        note={(Unpublished arXiv:2001.01312v2)},
}

\bib{ikrsw:xx20b}{unpublished}{
      author={Ionescu, Marius},
      author={Kumjian, Alex},
      author={Renault, Jean~N.},
      author={Sims, Aidan},
      author={Williams, Dana~P.},
       title={Pushouts of group bundles},
        date={2020},
        note={In preparation},
}

\bib{kmrw:ajm98}{article}{
      author={Kumjian, Alexander},
      author={Muhly, Paul~S.},
      author={Renault, Jean~N.},
      author={Williams, Dana~P.},
       title={The {B}rauer group of a locally compact groupoid},
        date={1998},
        ISSN={0002-9327},
     journal={Amer. J. Math.},
      volume={120},
      number={5},
       pages={901\ndash 954},
      review={\MR{2000b:46122}},
}

\bib{kumpas:nyjm00}{article}{
      author={Kumjian, Alex},
      author={Pask, David},
       title={Higher rank graph {$C^\ast$}-algebras},
        date={2000},
        ISSN={1076-9803},
     journal={New York J. Math.},
      volume={6},
       pages={1\ndash 20},
         url={http://nyjm.albany.edu:8000/j/2000/6_1.html},
      review={\MR{1745529 (2001b:46102)}},
}

\bib{kps:jncg16}{article}{
      author={Kumjian, Alex},
      author={Pask, David},
      author={Sims, Aidan},
       title={Simplicity of twisted {$C^*$}-algebras of higher-rank graphs and
  crossed products by quasifree actions},
        date={2016},
        ISSN={1661-6952},
     journal={J. Noncommut. Geom.},
      volume={10},
      number={2},
       pages={515\ndash 549},
         url={https://doi-org.unr.idm.oclc.org/10.4171/JNCG/241},
      review={\MR{3519045}},
}

\bib{kum:pams98}{article}{
      author={Kumjian, Alex},
       title={Fell bundles over groupoids},
        date={1998},
        ISSN={0002-9939},
     journal={Proc. Amer. Math. Soc.},
      volume={126},
      number={4},
       pages={1115\ndash 1125},
      review={\MR{MR1443836 (98i:46055)}},
}

\bib{laz:jmaa18}{article}{
      author={Lazar, Aldo~J.},
       title={A selection theorem for {B}anach bundles and applications},
        date={2018},
        ISSN={0022-247X},
     journal={J. Math. Anal. Appl.},
      volume={462},
      number={1},
       pages={448\ndash 470},
         url={https://doi.org/10.1016/j.jmaa.2018.02.008},
      review={\MR{3771256}},
}

\bib{mrw:tams96}{article}{
      author={Muhly, Paul~S.},
      author={Renault, Jean~N.},
      author={Williams, Dana~P.},
       title={Continuous-trace groupoid {$C\sp \ast$}-algebras. {III}},
        date={1996},
        ISSN={0002-9947},
     journal={Trans. Amer. Math. Soc.},
      volume={348},
      number={9},
       pages={3621\ndash 3641},
      review={\MR{MR1348867 (96m:46125)}},
}

\bib{moutu:xx11}{unpublished}{
      author={Moutuou, El-Ka\"ioum~M.},
      author={Tu, Jean-Louis},
       title={Equivalence of {F}ell systems and their reduced
  {$C^*$}-algebras},
     address={preprint},
        date={2011},
        note={(arXiv:math.OA.1101.1235v1)},
}

\bib{muhwil:dm08}{article}{
      author={Muhly, Paul~S.},
      author={Williams, Dana~P.},
       title={Equivalence and disintegration theorems for {F}ell bundles and
  their {$C\sp *$}-algebras},
        date={2008},
        ISSN={0012-3862},
     journal={Dissertationes Math. (Rozprawy Mat.)},
      volume={456},
       pages={1\ndash 57},
      review={\MR{MR2446021}},
}

\bib{muhwil:nyjm08}{book}{
      author={Muhly, Paul~S.},
      author={Williams, Dana~P.},
       title={Renault's equivalence theorem for groupoid crossed products},
      series={NYJM Monographs},
   publisher={State University of New York University at Albany},
     address={Albany, NY},
        date={2008},
      volume={3},
        note={Available at http://nyjm.albany.edu:8000/m/2008/3.htm},
}

\bib{ren:irms08}{article}{
      author={Renault, Jean~N.},
       title={Cartan subalgebras in {$C^*$}-algebras},
        date={2008},
        ISSN={0791-5578},
     journal={Irish Math. Soc. Bull.},
      number={61},
       pages={29\ndash 63},
      review={\MR{2460017 (2009k:46135)}},
}

\bib{ren:jot87}{article}{
      author={Renault, Jean~N.},
       title={Repr\'esentation des produits crois\'es d'alg\`ebres de
  groupo\"\i des},
        date={1987},
        ISSN={0379-4024},
     journal={J. Operator Theory},
      volume={18},
      number={1},
       pages={67\ndash 97},
      review={\MR{MR912813 (89g:46108)}},
}

\bib{ren:jot91}{article}{
      author={Renault, Jean~N.},
       title={The ideal structure of groupoid crossed product \cs-algebras},
        date={1991},
     journal={J. Operator Theory},
      volume={25},
       pages={3\ndash 36},
}

\bib{simwil:nyjm13}{article}{
      author={Sims, Aidan},
      author={Williams, Dana~P.},
       title={An equivalence theorem for reduced {F}ell bundle
  {$C^*$}-algebras},
        date={2013},
     journal={New York J. Math.},
      volume={19},
       pages={159\ndash 178},
}

\bib{simwil:art15}{article}{
      author={Sims, Aidan},
      author={Williams, Dana~P.},
       title={The primitive ideals of some \'etale groupoid {$C^*$}-algebras},
        date={2016},
        ISSN={1386-923X},
     journal={Algebr. Represent. Theory},
      volume={19},
      number={2},
       pages={255\ndash 276},
         url={http://dx.doi.org/10.1007/s10468-015-9573-4},
      review={\MR{3489096}},
}

\bib{wil:crossed}{book}{
      author={Williams, Dana~P.},
       title={Crossed products of {$C{\sp \ast}$}-algebras},
      series={Mathematical Surveys and Monographs},
   publisher={American Mathematical Society},
     address={Providence, RI},
        date={2007},
      volume={134},
        ISBN={978-0-8218-4242-3; 0-8218-4242-0},
      review={\MR{MR2288954 (2007m:46003)}},
}

\bib{wil:mjm15}{article}{
      author={Willett, Rufus},
       title={A non-amenable groupoid whose maximal and reduced
  {$C^*$}-algebras are the same},
        date={2015},
        ISSN={1867-5778},
     journal={M\"unster J. Math.},
      volume={8},
      number={1},
       pages={241\ndash 252},
      review={\MR{3549528}},
}

\bib{wil:toolkit}{book}{
      author={Williams, Dana~P.},
       title={A tool kit for groupoid {$C^*$}-algebras},
      series={Mathematical Surveys and Monographs},
   publisher={American Mathematical Society, Providence, RI},
        date={2019},
      volume={241},
        ISBN={978-1-4704-5133-2},
      review={\MR{3969970}},
}

\bib{wil:tams81}{article}{
      author={Williams, Dana~P.},
       title={The topology on the primitive ideal space of transformation group
  {$C\sp{\ast} $}-algebras and {C}.{C}.{R}. transformation group {$C\sp{\ast}
  $}-algebras},
        date={1981},
        ISSN={0002-9947},
     journal={Trans. Amer. Math. Soc.},
      volume={266},
      number={2},
       pages={335\ndash 359},
      review={\MR{MR617538 (82h:46081)}},
}

\bib{yam:xx87}{unpublished}{
      author={Yamagami, Shigeru},
       title={On the ideal structure of {$C^*$}-algebras over locally compact
  groupoids},
        date={1987},
        note={(Unpublished manuscript)},
}

\bib{zel:jmpa68}{article}{
      author={Zeller-Meier, Georges},
       title={Produits crois\'es d'une {$C\sp{\ast} $}-alg\`ebre par un groupe
  d'automorphismes},
        date={1968},
        ISSN={0021-7824},
     journal={J. Math. Pures Appl. (9)},
      volume={47},
       pages={101\ndash 239},
      review={\MR{MR0241994 (39 \#3329)}},
}

\end{biblist}
\end{bibdiv}

\end{document}